\DeclareFontFamily{U}{mathb}{\hyphenchar\font45}
\DeclareFontShape{U}{mathb}{m}{n}{
<-6> mathb5 <6-7> mathb6 <7-8> mathb7
<8-9> mathb8 <9-10> mathb9
<10-12> mathb10 <12-> mathb12
}{}
\DeclareSymbolFont{mathb}{U}{mathb}{m}{n}
\DeclareMathSymbol{\llcurly}{\mathrel}{mathb}{"CE}
\DeclareMathSymbol{\ggcurly}{\mathrel}{mathb}{"CF}
\numberwithin{equation}{section}
\newcommand{\WP}{\mathrm{WP}}
\renewcommand{\vec}[1]{\boldsymbol{#1}}
\renewcommand{\subset}{\subseteq}
\newcommand\vD{\vec D}
\newcommand\MU{\vec\mu}
\newcommand\cC{\mathcal{C}}
\newcommand\cD{\mathcal{D}}
\newcommand\cG{\mathcal{G}}
\newcommand\cE{\mathcal{E}}
\newcommand\cS{\mathcal{S}}
\newcommand\cT{\mathcal{T}}
\newcommand\cM{\mathcal{M}}
\newcommand\cP{\mathcal{P}}
\newcommand\eps{\varepsilon}
\newcommand\GG{\mathbb{G}}
\newcommand\NN{\mathbb{N}}
\newcommand\Var{\mathrm{Var}}
\newcommand\Erw{\mathbb{E}}
\newcommand{\vecone}{\vec{1}}
\newcommand{\Po}{{\rm Po}}
\newcommand{\Bin}{{\rm Bin}}
\newcommand\bc[1]{\left({#1}\right)}
\newcommand\cbc[1]{\left\{{#1}\right\}}
\newcommand\brk[1]{\left[{#1}\right]}
\newcommand\RR{\mathbb{R}}
\newcommand{\Whp}{W.h.p.}
\newcommand{\whp}{w.h.p.}
\newcommand\pr{\mathbb{P}} 
\renewcommand\Pr{\pr} 
\newcommand\Thm{Theorem}
\newtheorem{definition}{Definition}[section]
\newtheorem{claim}[definition]{Claim}
\newtheorem{remark}[definition]{Remark}
\newtheorem{theorem}[definition]{Theorem}
\newtheorem{lemma}[definition]{Lemma}
\newtheorem{proposition}[definition]{Proposition}
\newtheorem{corollary}[definition]{Corollary}
\newtheorem{fact}[definition]{Fact}
\newtheorem{assumption}[definition]{Assumption}
\def\pr{{\mathbb P}}
\newcommand{\changebranch}{\textswab{T}}
\newcommand{\changeold}[1]{\sigma_{#1}}
\newcommand{\changenew}[1]{\tau_{#1}}
\newcommand{\changeup}[1]{\xi_{#1}}
\newcommand{\changeboth}[1]{\vec \sigma_{#1}}
\newcommand{\changes}{\vec \sigma}
\newcommand{\WPchanges}{\cE_{\mathrm{WP}}}
\newcommand{\markchanges}{\cE_{\mathrm{mark}}}
\newcommand{\messagehistory}[3]{\vec \mu_{#1 \to #2}(\le #3)}
\newcommand{\initialdistribution}{{Q}_0}
\newcommand{\potentialchanges}{\cP}
\newcommand{\wpf}{\varphi}
\newcommand{\distf}{\phi_{\varphi}}
\newcommand{\vdistf}[1]{\vec \phi_{\varphi}^{#1}}
\newcommand{\alphabet}{\Sigma}
\newcommand{\msgspace}{\cM}
\newcommand{\ms}[2]{\ensuremath{\left(\kern-.2em\left(\genfrac{}{}{0pt}{}{#1}{#2}\right)\kern-.2em\right)}}
\newcommand{\mset}[1]{\ensuremath{\left\{\kern-.1em\left\{#1\right\}\kern-.1em\right\}}}
\newcommand{\messgraphs}[1]{\cG_{#1}}
\newcommand{\histgraphs}[2]{\vec \cG_{#1}^{(#2)}}
\newcommand{\proj}[1]{\overline{#1}}
\newcommand{\close}[1]{\sim_{#1}}
\newcommand{\vclose}[1]{\approx_{#1}}
\newcommand{\messaget}[1]{$#1$-message}
\newcommand{\inmessages}{in-messages}
\newcommand{\inmessaget}[1]{$#1$-in-message}
\newcommand{\inmessagest}[1]{$#1$-in-messages}
\newcommand{\outmessages}{out-messages}
\newcommand{\outmessaget}[1]{$#1$-out-message}
\newcommand{\outmessagest}[1]{$#1$-out-messages}
\newcommand{\history}{history}
\newcommand{\histories}{histories}
\newcommand{\historyt}[1]{$#1$-history}
\newcommand{\historiest}[1]{$#1$-histories}
\newcommand{\instory}{in-story}
\newcommand{\instories}{in-stories}
\newcommand{\instoryt}[1]{$#1$-in-story}
\newcommand{\instoriest}[1]{$#1$-in-stories}
\newcommand{\outstory}{out-story}
\newcommand{\outstories}{out-stories}
\newcommand{\outstoryt}[1]{$#1$-out-story}
\newcommand{\inp}{input}
\newcommand{\inps}{inputs}
\newcommand{\inpt}[1]{$#1$-input}
\newcommand{\inpst}[1]{$#1$-inputs}
\newcommand{\story}{story}
\newcommand{\stories}{stories}
\newcommand{\storyt}[1]{$#1$-story}
\newcommand{\storiest}[1]{$#1$-stories}
\newcommand{\incompilation}{in-compilation}
\newcommand{\incompilations}{in-compilations}
\newcommand{\incompilationt}[1]{$#1$-in-compilation}
\newcommand{\incompilationst}[1]{$#1$-in-compilations}
\newcommand{\incompseq}{in-compilation sequence}
\newcommand{\incompseqt}[1]{$#1$-in-compilation sequence}
\newcommand{\compseq}{compilation sequence}
\newcommand{\mine}{snag}
\newcommand{\mines}{snags}
\newcommand{\landmine}{duplicate}
\newcommand{\seamine}{error}
\newcommand{\nuclearmine}{freak}
\newcommand{\shrapnel}{spurious}
\newcommand{\rubble}{defective}
\newcommand{\spray}{erroneous}
\newcommand{\fallout}{faulty}
\newcommand{\barm}{\overline{m}}
\newcommand{\voone}{\vom_1}
\newcommand{\votwo}{\vom_2}
\newcommand{\vom}{\vec \mu}
\newcommand{\wpg}{\mathbb{G}}
\newcommand{\apg}{\hat{\mathbb{G}}}
\newcommand{\wpm}{m}
\newcommand{\apm}{\hat m}
\newcommand{\wpr}{r}
\newcommand{\apr}{\hat r}
\newcommand{\degdist}{{\bf \mathcal{Z}}}
\newcommand{\degdistset}{{\bf \mathcal{Z}}}
\newcommand{\offdist}[2]{\mathcal{Y}_{#2#1}}
\newcommand{\MSet}[2]{\mathcal{M}\bc{ #1,#2}  }
\newcommand{\Matrdist}[2]{ #1\brk{#2} }
\newcommand{\abs}[1]{\left\vert #1\right\vert}
\newcommand{\Consist}[1]{\cC_{#1}}
\newcommand{\Compat}[1]{\cD_{#1}}
\newcommand{\muin}{\vec{\mu}_{\mathrm{in}}}
\newcommand{\muout}{\mu_{\mathrm{out}}}
\newcommand{\ball}{B}
\newcommand{\empNBdist}{\mathfrak{U}}
\newcommand{\SetErootedG}{ \mathcal{G}}
\newcommand{\dTV}{d_{\mathrm{TV}}}
\newcommand{\typedeg}{\vec d}
\newcommand{\typedegind}{d}
\newcommand{\Admit}{\mathcal{K}}
\newcommand{\Seq}{\mathrm{Se}}
\newcommand{\switch}[1]{\bar{#1}}
\newcommand{\norm}[1]{\left\lVert#1\right\rVert}
\newcommand{\conddist}[2]{#1\vert_{#2}}
\newcommand{\distphi}[2]{#1^{\bc{ #2}}}
\newcommand{\vdistphi}[2]{#1^{\bc{ \leq #2}}}
\newcommand{\viteratdist}[1]{\vdistphi{Q}{#1}}
\newcommand{\vprobvec}{\mathcal{N}}
\newcommand{\muchless}{\llcurly }
\begin{document}

\title{Warning Propagation: stability and subcriticality}

\author{Oliver Cooley, Joon~Lee, Jean B.~Ravelomanana}

\thanks{Jean B.~Ravelomanana is supported by DFG CO 646/4.\\
Oliver Cooley is supported by Austrian Science Fund (FWF): I3747}

\address{Oliver Cooley, {\tt cooley@math.tugraz.at}, Graz University of Technology, Institute of Discrete Mathematics, Steyrergasse 30, 8010 Graz, Austria}

\address{Joon Lee and Jean B. Ravelomanana, {\tt \{joon.lee,jean.ravelomanana\}@tu-dortmund.de}, TU Dortmund, Fakult\"at f\"ur Informatik, 12 Otto-Hahn-Strasse, Dortmund, 44227, Germany. }

\begin{abstract}
	Warning Propagation is a combinatorial message passing algorithm that unifies and generalises a wide variety of recursive combinatorial procedures.
	Special cases include the Unit Clause Propagation and Pure Literal algorithms for satisfiability as well as the peeling process for identifying the $k$-core of a random graph.
	Here we analyse Warning Propagation in full generality on a very general class of multi-type random graphs.
	We prove that under mild assumptions on the random graph model and the stability of the the message limit,
	Warning Propagation converges rapidly. 
	In effect, the analysis of the fixed point of the message passing process on a random graph reduces to
	analysing the process on a multi-type Galton-Watson tree.
	This result corroborates and generalises a heuristic first put forward by Pittel, Spencer and Wormald in their seminal $k$-core paper (JCTB 1996).
	\hfill[MSc: 05C80]

\end{abstract}

\maketitle

\section{Introduction}

\subsection{Motivation and contributions}

The study of combinatorial structures in random graphs is a huge field
encompassing a wide variety of different topics, and the techniques used to study them
are as plentiful and as varied as the topics themselves, but
there are common themes to be found in approaches in seemingly unrelated areas.
One such theme is the implementation of a discrete-time algorithm to
pinpoint the desired substructure. A classic example is Unit Clause Propagation,
an algorithm which traces implications in a Boolean satisfiability problem~\cite{Achlioptas,FS}.
If the formula contains unit clauses, i.e.\ clauses containing only one literal, the algorithm
sets the corresponding variable to the appropriate truth value. This clearly has
further knock-on effects: other clauses in which the variable appears with the same sign are now
automatically satisfied and can be deleted; but clauses in which the variable appears with the opposite sign
are effectively shortened, potentially giving rise to further unit clauses, and the process continues.
Ultimately, we may reach a contradiction or a satisfying assignment, or neither if the process stops
with all clauses containing at least two literals. In this case we can ``have a guess'',
assigning a random truth value to a random variable and continue the process.

Another quintessential example is the peeling process for the $k$-core, in which recursively
vertices of degree at most $k-1$ are deleted from the graph until what remains is the
(possibly empty) $k$-core (see e.g.~\cite{Pittel,MolloyCores}).
Further examples include the study of sparse random matrices, the freezing phase transition in random constraint satisfaction problems,
bootstrap percolation or decoding low-density parity check codes~\cite{Barriers,COFKR,DuboisMandler,Gallager,Molloy,RichardsonUrbanke}.

Warning Propagation is a a message passing scheme that provides a unified framework
for such recursive processes~\cite{MM}. Roughly speaking, the scheme sends messages along
edges of a graph which are then recursively updated: the messages that a vertex sends depends
on the messages that it receives from its neighbours according to some update rule.
The semantics of the messages and the choice of update rule is fundamentally dependent on the
particular problem to which the scheme is applied: the messages may indicate truth values
of variables in a satisfiability formula, for example, or membership of the $k$-core.
To understand the combinatorial substructures under consideration, we need to understand
the fixed points of the corresponding recursive algorithms, or equivalently the fixed points of the
appropriate instances of Warning Propagation.

There have been many different approaches to analysing such recursive processes
using a variety of different techniques. One classical tool is the differential equations method~\cite{Wormald},
which was used in the seminal $k$-core paper of Pittel, Spencer and~Wormald~\cite{Pittel}
as well as in the analysis of Unit Clause Propagation~\cite{Achlioptas}.
Other approaches include branching processes~\cite{Riordan}, enumerative methods~\cite{COCKS2}, or birth-death processes~\cite{JansonLuczak,JansonLuczak2}.

However, despite their very different appearances, these approaches all share a common feature:
in one way or another, they show that the recursive process converges quickly to its fixed point.
In other words, the final outcome of the process can be approximated arbitrarily well by running only
a bounded number of rounds of the recursive process. Equivalently, in each of these particular instances,
the Warning Propagation scheme converges quickly.

In this paper we analyse Warning Propagation in full generality on a very general multi-type model of random graphs.
Special cases of this model include not just the Erd\H{o}s-R\'enyi binomial random graph model $G\bc{n,p}$
and its $k$-partite analogues, but also the stochastic block model, random regular graphs or indeed random graphs with a prescribed degree sequence,
and factor graphs of random hypergraphs.
We prove that under mild, easy-to-check assumptions Warning Propagation converges rapidly.
Not only does this result confirm the heuristic that running Warning Propagation for a bounded number of rounds
suffices to approximate its ultimate fixed point arbitrarily well,
our result also identifies the essential reason for this behaviour.
More precisely, after a large but bounded number of steps,
the subsequent knock-on effect of a single change can be modelled by a branching process;
we demonstrate that a mild stability assumption guarantees that this branching process is subcritical.
The upshot is that late changes in the process will ultimately fizzle out rather than triggering
a cascade of further effects.

Apart from re-proving known results in a new, unified way, the main results of this paper facilitate new applications of Warning Propagation.
Indeed, to analyse any specific recursive process that can be translated into the formalism of \Thm~\ref{thm:main} below one just needs to investigate the recursion on a multi-type Galton-Watson tree that mimics the local structure of the respective random graph model.
Typically this task boils down to a mundane fixed point problem in Euclidean space.
\Thm~\ref{thm:main} thus enables an easy and accurate analysis of generic recursive processes on random structures.
A concrete example that actually inspired this work was our need to study a recursive process that arises in the context of random matrix theory~\cite{we}.

\subsection{Random graph model}

Our goal is to study warning propagation on a random graph $\wpg$, which may be chosen from a wide variety of different models,
and which we first describe briefly and informally---the formal requirements on $\wpg$ are introduced in Section~\ref{sec:assump},
specifically in Assumption~\ref{AssumeNew}.

We will assume that the vertices of $\wpg$ are of \emph{types} $1,\ldots,k$ for some fixed integer $k$; we denote by $V_i$ the set
of vertices of type $i$ for $i\in \brk{k}$ and set $n_i:=\abs{V_i}$. The $n_i$ need not be deterministically fixed, but may themselves be random
variables depending on an implicit parameter $n\in \NN$ which tends to infinity, and in particular all asymptotics are with respect to $n$
unless otherwise specified.
Vertices of different types may exhibit very different behaviour, but vertices of the same type should behave according to the
same random distribution.
More specifically, for a vertex $v \in V_i$ the (asymptotic) distribution 
of the numbers of neighbours of each type $j \in \brk{k}$ will be described by $\degdist_i$, which is
a probability distribution on $\NN_0^k$, the set of sequences of natural numbers of length $k$; 
the $j$-th entry of $\degdist_i$ describes the numbers of neighbours of type $j$. This will be introduced more formally in Section~\ref{sec:distfp}

To give a concrete example,
if we were to study simply $G\bc{n,d/n}$ for some fixed constant $d$, we would set
$k=1$ and $n_1=n$, and each vertex would have $\Po\bc{d}$ neighbours of type $1$.
For random $d$-regular graphs, we would also have $k=1$ and $n_1=n$, but now the number of neighbours would be deterministically $d$
(i.e.\ the random distribution would be entirely concentrated on $d$).

A slightly more complex example is random $d$-SAT with $n$ variables and $m$ clauses of size $d$.
The standard way of representing an instance of the problem is to have vertex classes $V_1,V_2$ representing the variables and the clauses respectively,
with an edge between a variable $v$ and a clause $A$ if $v$ appears in $A$. Furthermore, the edge is coloured depending
on whether $v$ is negated in $A$ or not. However, since we do not allow for edges of different types, we must represent this differently.
This can be done by adding two further classes $V_3,V_4$ and subdividing an edge $vA$ with a vertex of type~3
if $v$ is unnegated in $A$ and of type~4 otherwise.
Then a vertex of $V_1$, representing a variable, would have $\Po\bc{\frac{dm}{2n}}$ neighbours of type~3 and similarly and independently of type~4;
a vertex of $V_2$, representing a clause, would have $X \sim \Bin\bc{d,1/2}$ neighbours of type~3 and $d-X$ neighbours of type~4;
while vertices of $V_3,V_4$ would each
have precisely one neighbour each of types~1 and~2.

We will have various relatively loose restrictions on the graph model $\wpg$ which are required during the proof, see Section~\ref{sec:assump} for the full list.
Informally, we require $\wpg$ to satisfy four conditions with high probability, namely:
\begin{itemize}
\item The vertex classes have the same order of magnitude and not too large variance.
\item The graph $\wpg$ is uniformly random given its type-degree sequence.
\item There are few vertices of high degree.
\item The local structure is described by the $\cT_i\bc{\degdist_1,\ldots,\degdist_k}$.
\end{itemize}
Here we note in particular that we require each $V_i$ to have bounded average degree.

\subsection{Warning propagation}
In this section we formally introduce the Warning Propagation (WP) message passing scheme and its application to random graphs.
Applied to a graph $G$, Warning Propagation will associate two directed messages $\mu_{v\to w},\mu_{w \to v}$ with each edge $vw$ of $G$.
These messages take values in a finite alphabet $\alphabet$.
Hence, let $\msgspace\bc{G}$ be the set of all vectors $\bc{\mu_{v\to w}}_{\bc{v,w}\in V\bc{G}^2:vw\in E\bc{G}} \in \alphabet^{2\abs{E\bc{G}}}$.
The messages get updated in parallel according to some fixed rule.
To formalise this, for $d \in \NN$ let $\ms\alphabet d$ be the set of all $d$-ary multisets with elements from $\alphabet$ and let
\begin{align}\label{eqUpdate}
	\wpf:\bigcup_{d\geq0}\ms\alphabet d\to\alphabet
\end{align}
be an \emph{update rule} that, given any multiset of input messages, computes an output message.
Then we define the Warning Propagation operator on $G$ by
\begin{align*}
	\WP_G&:\msgspace\bc{G}\to\msgspace\bc{G},&&\mu=\bc{\mu_{v\to w}}_{vw}\mapsto\bc{\wpf\bc{\mset{\mu_{u\to v}:{uv\in E\bc{G},u\neq w}}}}_{vw},
\end{align*}
where $\mset{a_1,\ldots,a_k}$ denotes the multiset whose elements (with multiplicity) are $a_1,\ldots,a_k$.

In words, to update the message from $v$ to $w$ we apply the update rule~$\wpf$ to the messages that $v$ receives from all its {\em other} neighbours $u\neq w$.

To give some examples of concrete instances, when studying the $k$-core the messages would typically be $0$ or $1$,
and the update rule would be defined by $\wpf\bc{A} = \vecone\cbc{\sum_{a \in A}a \ge k-1}$, i.e.\ a vertex sends a message of~$1$
to a neighbour iff it receives at least $k-1$ messages of~$1$ from its other neighbours.
At the end of the process, the $k$-core consists of precisely those vertices which receive at least $k$ messages
of $1$ from their neighbours.
Alternatively, in a constraint satisfaction problem,
the message from a variable to a constraint may
indicate that the variable is frozen to a specific value due to its other constraints,
while the message from a constraint to a variable indicates
whether that constraint requires the variable to take a specific value.

Let us note that in many applications, the obvious approach would be to define the WP scheme with different
update rules $\wpf_1,\ldots,\wpf_k$ for each type of vertex, or indeed where the update rule takes account of which
type of vertex each message was received from. While this would be entirely natural,
it would lead to some significant notational complexities later on. We therefore adopt
an alternative approach: the messages of the alphabet $\alphabet$ will, in particular,
encode the types of the source and target vertices, and we can therefore make do with a single update function
which receives this information and takes account of it. Of course, this means that along
a particular directed edge, many messages from $\alphabet$ are automatically disqualified
from appearing because they encode the wrong source and target types.
Indeed, at a particular vertex all incoming messages must encode the same appropriate target type,
and therefore many multisets of messages can never arise as inputs of the update function.
On the other hand,
the major benefit of this approach is that much of the notational complexity of the problem is subsumed
into the alphabet $\alphabet$ and the update function $\wpf$. This will be discussed more formally in Sections~\ref{sec:prereq}, and~\ref{sec:apg}.

In most applications of Warning Propagation the update rule~\eqref{eqUpdate} enjoys a monotonicity property which ensures that for any graph $G$
and for any initialisation $\mu^{\bc{0}}\in\msgspace\bc{G}$ the pointwise limit $\WP_G^*\bc{\mu^{\bc{0}}} :=\lim_{t\to\infty}\WP_G^t\bc{\mu^{\bc{0}}}$ exists,
although in general monotonicity is
not a necessary prerequisite for such a limit to exist.
If it does, then clearly this limit is a fixed point of the Warning Propagation operator.

Our goal is to study the fixed points of $\WP$ and, particularly, the rate of convergence on
the random graph $\wpg$.
We will assume that locally $\wpg$ has the structure of a multi-type Galton-Watson tree.
We will prove that under mild assumptions on the update rule, the $\WP$ fixed point can be characterised in terms of this local structure only.
To this end we need to define a suitable notion of a $\WP$ fixed point on a random tree.
At this point we could consider the space of (possibly infinite) trees with $\WP$ messages, define a measure on this space
and consider the action that the WP operator induces. Fortunately, the recursive
nature of the Galton-Watson tree allows us to bypass this complexity.
Specifically, our fixed point will just be a collection of probability distributions on $\alphabet$, one for each possible type of directed edge,
such that if the children of a vertex $v$ in the tree send messages independently according to these distributions,
then the message from $v$ to its own parent will also have the appropriate distribution from the collection.
The collection of distributions can be conveniently expressed in matrix form.
For a matrix $M$, we denote by $\Matrdist{M}{i,j}$ the entry at position $\bc{i,j}$ in the matrix
and by $\Matrdist{M}{i}$ the $i$-th row $\bc{\Matrdist{M}{i,j}}_{j \in \brk{k}}$.
\footnote{We avoid the usual $M_{ij}$ index notation since this will clash with other subscripts later on.}

\begin{definition} \label{def:probaonMess}
	Given a set $S$, a \emph{probability distribution matrix} on $S$ is a $k\times k$ matrix $Q$ in which each entry $\Matrdist{Q}{i,j}$ of $Q$ is a probability distribution on $S$. 
\end{definition} 

The intuition is that the entry  $\Matrdist{Q}{i,j}$ should model the probability
distribution of the message along an edge from a vertex of type $i$ to a vertex of type $j$.
Heuristically, the incoming messages at a vertex will be more or less independent of each other; short-range correlations
can only arise because of short cycles, of which there are very few in the sparse regime, while long-range correlations should be weak if they exist at all.
We will certainly \emph{initialise} the messages independently.

\begin{definition} \label{def:initial}
For a graph $G$ and a probability distribution matrix $Q$ on $\alphabet$, we refer to \emph{initialising messages in $G$ according to $Q$}
to mean that we initialise the message $\mu_{u \to v}\bc{0}$ for each directed edge $\bc{u,v}$ independently at random according to $\Matrdist{Q}{i,j}$, where $i$ and $j$ are the types of $u$ and $v$ respectively.
\end{definition}

In many applications, the initialisation of the messages is actually deterministic, i.e., each entry of
$Q$ is concentrated on a single element of $\alphabet$, but there are certainly
situations in which it is important to initialise randomly.

Given the local structure of the random graph model $\wpg$ as described by a multi-type Galton-Watson tree,
we can compute the asymptotic effect
of the warning propagation update rules on the probability distribution matrix: for a directed edge $vw$ of type $\bc{i,j}$,
we consider the other neighbours of $v$ with their types according to the local structure, generate messages independently according
to the current probability distribution matrix and compute the updated message along $vw$. Since the generation of
neighbours and of messages was random, the updated $vw$ message is also random and its distribution gives
the corresponding entry of the updated matrix. Repeating this for all $i,j \in \brk{k}$ gives the updated matrix.
This process is described more formally in Section~\ref{sec:distfp}.

With this notion of updating probability distribution matrices, we can consider the \emph{limit} of an initially
chosen matrix $\initialdistribution$. More specifically, we will need the existence of a \emph{stable WP limit},
meaning that the update function is a contraction in the neighbourhood of the limit with respect to an appropriate metric.
Again, formal details are given in Section~\ref{sec:distfp}.

\subsection{Main result}

Given a probability distribution matrix $\initialdistribution$ on $\alphabet$, we ask how quickly Warning Propagation will converge on $\wpg$ from a random initialisation according to $\initialdistribution$.

We will use $\WP^t_{v\to w}\bc{\mu^{\bc{0}}}$ to denote the message from $v$ to $w$ in $\wpg$ after $t$ iterations of $\WP_\wpg$
with initialisation~$\mu^{\bc{0}}$. Note that the graph $\wpg$ is implicit in this notation.

\begin{theorem}\label{thm:main}
Let $\wpg$ be a random graph model satisfying Assumption~\ref{AssumeNew}
and let $P, \initialdistribution$ be probability distributions on $\alphabet$ such that $P$ is the stable WP limit of $\initialdistribution$.
Then for any $\delta>0$ there exists $t_0=t_0\bc{\delta, \degdist,\wpf,\initialdistribution}$ such that the following is true.

Suppose that $\mu^{\bc{0}}\in \msgspace\bc{\wpg}$ is an initialisation
according to $\initialdistribution$.
Then \whp\ for all $t\ge t_0$ we have
\begin{align*}
\sum_{v,w:vw\in E\bc{\wpg}}\vecone\cbc{\WP^t_{v\to w}\bc{\mu^{\bc{0}}}\neq\WP^{t_0}_{v\to w}\bc{\mu^{\bc{0}}}}<\delta n.
\end{align*}
\end{theorem}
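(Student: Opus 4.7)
The plan is to combine three ingredients: local weak convergence of $\wpg$ to the multi-type Galton--Watson tree, convergence of the WP iterates on the tree to the stable limit $P$, and a subcritical branching-process bound on the propagation of late changes.

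First I would argue that, because the message $\WP^{t_0}_{v\to w}\bc{\mu^{\bc{0}}}$ depends only on the depth-$t_0$ in-neighbourhood of $v$, for every fixed $t_0$ the joint distribution of messages of a given type $\bc{i,j}$ on directed edges of $\wpg$ is, up to $o\bc{n}$ error, determined by the Galton--Watson tree $\cT_i\bc{\degdist_1,\ldots,\degdist_k}$ prescribed in Assumption~\ref{AssumeNew}. Hence the empirical distribution of time-$t_0$ messages on edges of type $\bc{i,j}$ is close in total variation to the tree distribution $\Matrdist{\distphi{Q}{t_0}}{i,j}$ obtained by iterating the WP update on the tree starting from $\initialdistribution$. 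This reduces the analysis of the first $t_0$ rounds of WP on $\wpg$ to a calculation on the tree. Then I would use the stability of $P$: by hypothesis, $\distphi{Q}{t}$ converges to $P$ on the tree, with the WP update acting as a contraction in a neighbourhood of $P$. Given $\delta'>0$, one can therefore pick $t_0$ so large that, for every type $\bc{i,j}$, the total variation distance between $\Matrdist{\distphi{Q}{t_0}}{i,j}$ and $\Matrdist{\distphi{Q}{t_0+s}}{i,j}$ is below $\delta'$ for all $s\ge 0$. Combined with the local convergence, this shows that \whp\ only a $\delta'$-fraction of directed edges see their message change between time $t_0$ and time $t_0+1$.

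The core of the proof is to lift this one-step control to an all-times statement for $t\ge t_0$. I would set up a change-propagation process by declaring an edge $\bc{v,w}$ to be \emph{active} at time $t>t_0$ whenever $\WP^{t}_{v\to w}\neq\WP^{t-1}_{v\to w}$. A change at $\bc{v,w}$ at time $t$ can only trigger changes at the outgoing edges $\bc{w,u}$, $u\ne v$, at time $t+1$, and only when the multiset of the remaining incoming messages at $w$ lies on a boundary between level sets of $\wpf$. Because the marginal distribution of these multisets is close to the one obtained under $P$, the conditional probability of producing $r$ offspring changes is asymptotically the same as the offspring distribution of a multi-type branching process associated with $\wpf$ and $P$. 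The stability assumption on $P$ is precisely the statement that the mean matrix of this branching process has spectral radius strictly less than one, so the process is \emph{subcritical}. A first-moment calculation then bounds the expected total number of active edges across all times $t\ge t_0$ by a constant times the number of initial active edges, which is $O\bc{\delta' n}$; choosing $\delta'$ small enough and applying Markov's inequality yields the required bound.

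The main obstacle is making the branching-process coupling rigorous in the presence of the dependencies inherent to iterating WP repeatedly on the \emph{same} finite random graph: changes can revisit a vertex whose neighbourhood has already been exposed, and in principle they may reinforce one another. I would handle this by exploiting the bounded-average-degree and tail assumptions in Assumption~\ref{AssumeNew} together with the local tree-likeness: since the $\Theta\bc{\log n}$-neighbourhood of a typical vertex is still tree-like and vertices of high degree are scarce, the subcritical approximation is valid on any fixed time scale, and the $o\bc{n}$ exceptional vertices contribute only negligibly to the edge-change count. A concentration argument (bounded differences applied to the switching construction, or a direct variance estimate on the branching approximation) then upgrades the expected bound to a with-high-probability bound, completing the proof.
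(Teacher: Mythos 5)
Your high-level strategy mirrors the paper's: reduce the first $t_0$ rounds to a tree computation, argue that the subsequent effect of a single late change is governed by a branching process that is subcritical because $P$ is stable, and combine a first-moment bound with Markov's inequality. That is the right picture, and much of your first paragraph (local weak convergence, picking $t_0$ so that only $\delta'n$ edges change at time $t_0\to t_0+1$) lines up with what the paper does.

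There are, however, two genuine gaps, and both sit at the technically hardest points. First, you assert that ``the stability assumption on $P$ is precisely the statement that the mean matrix of this branching process has spectral radius strictly less than one.'' This is not the definition, and the implication is not immediate. Stability is defined as $\distf$ being a contraction near $P$ with respect to $\dTV$ (Definition~\ref{def:FPdistfun}), while subcriticality is a statement about the mean matrix $T$ of the change process $\changebranch$, which tracks ordered pairs $\changeboth{}=\bc{\changeold{},\changenew{}}$ of (old message, new message). Bridging the two is Proposition~\ref{prop:subcritical}, whose proof is a page-long coupling argument: perturb $P$ by $\eps$ in the $\changeboth{0}$-direction, track how the perturbation propagates up a $\cT_i$-tree, derive a pointwise lower bound on the propagated mass in terms of $T^s\vec x_0$ minus an $O\bc{\eps^2}$ error, and contrast it with the exponential decay forced by the contraction. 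You cannot skip this; if you treat stability as synonymous with subcriticality, the key hypothesis of the theorem is doing no work.

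Second, the coupling between the exploration on $\wpg$ and the branching process is where the paper spends most of its effort, and your sketch (``bounded differences applied to the switching construction, or a direct variance estimate'') does not pin down a workable mechanism. The paper's device is the alternative model $\apg_{t_0}$ (Definition~\ref{def:apg}): generate the \instories\ and \outstories\ on half-edges a priori, so that message histories are known before the graph is assembled, and only then match half-edges consistently. Lemma~\ref{lem:contiguity} shows $\apg_{t_0}\close{\delta}\wpg_{t_0}$ \whp. This is what lets one run a ``marking process'' in which each newly-revealed vertex has a fresh, known \incompilation\ distribution, with a small number of classified failure modes (revisits, vertices where $\apg_{t_0}$ and $\wpg_{t_0}$ disagree, high-degree vertices) handled separately via the stopping conditions of exhaustion, expansion and explosion. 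Without some such a-priori-messages construction, the dependencies you correctly flag (revisited vertices, reinforcement) are not controlled by a generic concentration inequality: the exploration is adaptive, and an edge's message at time $t>t_0$ depends on the already-exposed portion of the graph in a way that bounded differences does not tame directly. So while your outline is the right shape, both the subcriticality lemma and the coupling need to be proved, not asserted, and you should expect each to carry real content.
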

In other words, the WP messages at any time $t\ge t_0$ are identical to those at time $t_0$ except on a set of at most $\delta n$ directed edges.
Thus \Thm~\ref{thm:main} shows that under a mild stability condition Warning Propagation converges rapidly.
Crucially, the number $t_0$ of steps before Warning Propagation stabilises does not depend on the underlying parameter $n$,
or even on the exact nature of the graph model $\wpg$,
but only on the desired accuracy $\delta$,
the degree distribution $\degdist$, the Warning Propagation update rule $\wpf$ and the initial distribution $\initialdistribution$.

\subsection{Discussion and related work}
\Thm~\ref{thm:main} implies a number of results that were previously derived by separate arguments.
For instance, the theorem directly implies the main result from~\cite{Pittel} on the $k$-core in random graphs.
Specifically, the theorem yields the threshold for the emergence of the $k$-core threshold as well as the typical number of vertices and edges in the core (in a law of large numbers sense).
Of course, several alternative proofs of (and extensions of) this result, some attributed as simple, exist~\cite{Cooper,Darling,Fernholz1,Fernholz2,JansonLuczak,Kim,MolloyCores,Riordan},
but here we obtain this result as an application of a more general theorem.

Since our model also covers multi-type graphs, it enables a systematic approach to the freezing phenomenon in
random constraint satisfaction problems~\cite{MM,Molloy,MR},
as well as to hypergraph analogues of the core problem~\cite{CKZ21,JansonLuczak,Kim,MolloyCores,Pittel,Riordan,Skubch}
by considering the factor graph.

The specific application that led us to investigate Warning Propagation in general deals with random matrix theory~\cite{we}.
In that context Warning Propagation or equivalent constructions have been applied extensively~\cite{AchlioptasMolloy,DuboisMandler,Ibrahimi,MM}.
Technically the approach that is most similar to the present proof strategy is that of Ibrahimi, Kanoria, Kraning and Montanari~\cite{Ibrahimi}, who use an argument based on local weak convergence.

\subsection{Proof outline}

A fundamental aspect of the proof is that we do not analyse $\WP$ directly on $\wpg$
and consider its effect after $t_0$ iterations,
but instead define an alternative random model
$\apg_{t_0}$ (see Definition~\ref{def:apg}): Rather than generating the edges of the graph and then computing messages,
this random model first generates half-edges with messages, and then matches up the half-edges
in a consistent way. Thus in particular the messages are known a priori.
The key point is that the two models are very similar (Lemma~\ref{lem:contiguity}).

Among other things, it follows from this approximation that very few changes will be made when moving from
$\WP_\wpg^{t_0-1}\bc{\mu^{\bc{0}}}$ to $\WP_\wpg^{t_0}\bc{\mu^{\bc{0}}}$, but in principle these few changes could
cause cascade effects later on.
To rule this out we define a branching process $\changebranch$ which approximates
the subsequent effects of a single change at time $t_0$.
The crucial observation is that the stability of the distributional fixed point $P$ implies that this
branching process is subcritical (Proposition~\ref{prop:subcritical}),
and is therefore likely to die out quickly.
Together with the fact that very few changes are made at step $t_0$,
this ultimately implies that there will be few subsequent changes.

\subsection{Paper overview}

The remainder of the paper is arranged as follows.
In Section~\ref{sec:prereq} we formally introduce the notation, terminology and
assumptions on the model~$\wpg$ which appear in the statement of Theorem~\ref{thm:main}
and throughout the paper.
In Section~\ref{sec:apg}
we define the $\apg_{t_0}$ model and introduce Lemma~\ref{lem:contiguity},
which states that this model is a good approximation for Warning Propagation on $\wpg$.
In Section~\ref{sec:prelim} we present various preliminary results
that will be used in later proofs.
In Section~\ref{sec:contiguity} we go on to prove Lemma~\ref{lem:contiguity}.

In Section~\ref{sec:subcritical} we introduce the branching process $\changebranch$
and prove that it is subcritical. In Section~\ref{sec:beyond} we then draw together the results
of previous sections to prove that after $t_0$ iterations of $\WP$, very few further changes
will be made, and thus prove Theorem~\ref{thm:main}.

\section{Prerequisites}\label{sec:prereq}

In this section we formally define some of the notions required for the statement of Theorem~\ref{thm:main},
as well as introducing the assumptions that we require the model $\wpg$ to satisfy. For a set $S$, we will denote
by $\cP\bc{S}$ the space of probability distributions on $S$. We will occasionally abuse notation by conflating a
random variable with its probability distribution, and using the same notation to refer to both.

\subsection{Distributional fixed points}\label{sec:distfp}

\begin{definition}\label{def:admissible}
For each $i\in\brk{k}$, let $\degdist_{i}\in \cP\bc{\NN_0^k}$.
For $j \in \brk{k}$, denote by $\degdist_{ij}$ the  marginal distributions of $\degdist_{i}$ on the $j$-th entry. 
  We say that $\bc{i,j}\in\brk{k}^2$ is an \emph{admissible} pair if \ $\Pr \bc{
 	\degdist_{ij} \geq 1 } \neq  0$, and denote by $\Admit=\Admit\bc{\degdist_1,\ldots,\degdist_k}$ the set of admissible pairs.
\end{definition}

Intuitively, the $\degdist_i$ will describe the local structure of the random input graph $\wpg$,
in the sense that the distribution of the neighbours with types of a vertex $v \in V_i$ will be approximately $\degdist_i$
(see Definition~\ref{def:GWproc} later).
Therefore
the admissible pairs describe precisely those pairs of classes $V_i$ and $V_j$ between which we expect some edges
to exist. In particular, if the $\degdist_i$ accurately describe the local structure, then $\bc{i,j}$ is admissible if and only if $\bc{j,i}$ is also admissible.

Note, however, that if we aim to analyse the message along a directed edge from $v \in V_i$ to $w\in V_j$,
we need to know about the distribution of the \emph{other} neighbours of $v$, and cannot simply draw from $\degdist_i$
because we already have one guaranteed neighbour of type $j$, which may affect the distribution. This motivates the following definition.

\begin{definition} \label{def:offspringdist}
Let $\degdist_1,\ldots,\degdist_k\in \cP\bc{\NN_0^k}$.
For each $\bc{i,j}\in\Admit$, define $\offdist{j}{i} = \offdist{j}{i}\bc{\degdist_i} \in \cP\bc{\NN_0^k}$ to
be the probability distribution such that for $\bc{a_1, \ldots, a_k} \in \NN_0^k$ we have
 \begin{align*}
		\Pr \bc{ \offdist{j}{i} = \bc{a_1, \ldots, a_k}} &:=  \frac{ \Pr \bc{ \degdist_{i}=\bc{a_1,  \ldots, a_{j-1}, a_j + 1, a_{j+1},\ldots, a_k}}}{\Pr\bc{\degdist_{ij} \geq 1 }}. 
		\end{align*}
\end{definition}
Equivalently, $\offdist{j}{i}$ and $\degdist_{i}$ satisfy the following relation.   Let $\cE_{ij}$ be the event $\degdist_{ij}\geq 1$. Then for any $\bc{a_1, \ldots, a_k} \in \NN_0^k$ such that $a_j\geq 1$ we have 
\begin{align*}
	 \Pr \bc{\offdist{j}{i}=\bc{a_1, \ldots, a_{j-1}, a_j -1, a_{j+1}, \ldots, a_k}}= \Pr \bc{\degdist_{i}=\bc{a_1, \ldots, a_k } \big\vert \cE_{ij} }.
\end{align*}

We will talk about \emph{generating vertices with types} according to a distribution $\mathcal{D}$ on $\NN_0^k$,
by which we mean that we generate a vector $\bc{z_1,\ldots,z_k}$ according to $\mathcal{D}$,
and for each $i \in \brk{k}$ we generate $z_i$ vertices of type $i$.
Usually, $\mathcal{D}$ will be $\degdist_i$ or $\offdist{j}{i}$ for some $i,j \in \brk{k}$.
Depending on the context, we may also talk about generating \emph{neighbours}, \emph{children}, \emph{half-edges} etc.\ with types,
in which case the definition is analogous.

\begin{definition} \label{def:GenMulti}
Given $\cD \in \cP\bc{\NN_0^k}$
and a vector $\vec q = \bc{q_1,\ldots,q_k} \in \bc{\cP\bc{\alphabet}}^k$ of probability distributions on $\alphabet$,
let us define a multiset $\MSet{\cD}{\vec{q}}$ of elements of $\alphabet$ as follows.
\begin{itemize}
	\item  Generate a vector  $\bc{a_1, \ldots, a_k}$ according to $\cD$.
	\item  For each $j \in \brk{k}$ independently, select $a_j$ elements of \ $\alphabet$ independently according to $q_{j}$. Call the resulting multiset $\cM_{j}$.	
	\item  Define $\MSet{\cD}{\vec{q}}:= \biguplus_{j=1}^k \cM_{j}.  \footnote{The symbol $\biguplus$ denotes the multiset union of two multisets $A,B$, e.g.\ if $A=\mset{a,a,b}$ and $B=\mset{a,b,c,c}$ then $A \biguplus B = \mset{a,a,a,b,b,c,c}$.}$
\end{itemize}
\end{definition}

The motivation behind this definition is that $\cD$ will represent a distribution of neighbours with types,
typically $\degdist_i$ or $\offdist{j}{i}$ for some $i,j\in \brk{k}$.
Meanwhile $\vec q$ will represent the distributions of messages from the vertices of various types,
typically chosen according to the appropriate entry of a probability distribution matrix,
which are heuristically almost independent.
Thus $\MSet{\cD}{\vec q}$ describes a random multiset of incoming messages at a vertex with the appropriate distribution.

We can now formally describe how the WP update function affects the distribution of messages,
as described by a probability distribution matrix on $\alphabet$.

\begin{definition} \label{def:distribMulti}
Given a probability distribution matrix $Q$ on $\alphabet$ with rows $\Matrdist{Q}{1},\ldots,\Matrdist{Q}{k}$, let $\distf\bc{Q}$ denote the probability distribution matrix $R$ on $\alphabet$ where each entry $\Matrdist{R}{i,j}$
is the probability distribution on $\alphabet$ given by
$$
\Matrdist{R}{i,j}:=\wpf\bc{\MSet{\offdist{j}{i}}{\Matrdist{Q}{i}}}.
$$
Further, let $\distf^t\bc{Q} = \distf\bc{\distf^{t-1}\bc{Q}}$ denote the $t^{th}$ iterated function of $\distf$ evaluated at $Q$.
In order to ease notation, we sometimes denote $\distf^t\bc{Q}$ by $\distphi{Q}{t}$ when $\distf$ is clear from the context.
\end{definition}

In an idealised scenario, this update function precisely describes how the probability distribution matrix should change over time:
along a directed edge of type $\bc{i,j}$, the messages in the next step will be determined by \emph{other} incoming messages at the source vertex;
the neighbours and their types may be generated according to $\offdist{j}{i}$; the corresponding messages are generated according to $\Matrdist{Q}{i}$.

We will ultimately show that this idealised scenario is indeed a reasonable approximation. But we are also interested
in what occurs when we iterate this process from an appropriate starting matrix. Does it converge to some limit? In order to quantify this,
we need the following metric on the space of probability distribution matrices, which is a simple extension of the standard
total variation distance for probability distributions, denoted $\dTV\bc{\cdot,\cdot}$. 

\begin{definition}\label{def:dtv}
The total variation distance of two $k\times k$ probability distribution matrices $Q$ and $R$ on the same set $S$ is defined as
$\dTV\bc{Q,R} := \sum_{i,j\in\brk{k}} \dTV\bc{\Matrdist{Q}{i,j}, \Matrdist{R}{i,j}}$. 
\end{definition}

It is elementary to check that $\dTV$ is indeed a metric on the space of $k\times k$ probability distribution matrices on~$\alphabet$,
and whenever we talk of limits in this space, those limits are with respect to this metric.
We can now define the key notion of a \emph{stable WP limit}, which is fundamental to Theorem~\ref{thm:main}.

\begin{definition}\label{def:FPdistfun} Let $P$ be a probability distribution matrix on $\alphabet$
and $\wpf:\bigcup_{d\geq0}\ms\alphabet d\to\alphabet$ be a WP update rule.
	\begin{enumerate}
		\item We say that $P$ is a \emph{fixed point} if $\distf\bc{P}=P$.
		\item A fixed point $P$ is \emph{stable} if $\distf$ is a contraction on a neighbourhood of $P$ with respect to
		the total variation distance $\dTV$ as defined in Definition~\ref{def:dtv}.
		\item We say that $P$ is the \emph{stable WP limit} of a probability distribution matrix $\initialdistribution$ on $\alphabet$
		if $P$ is a stable fixed point,
		and furthermore the limit $\distf^*\bc{\initialdistribution}:=\lim_{t \to \infty} \distf^t\bc{\initialdistribution}$ exists and equals $P$.
	\end{enumerate} 
\end{definition}

\subsection{Assumptions on the $\wpg$ model}\label{sec:assump}

In order to apply the results of this paper, we will need the
random graph $\GG$ to be reasonably well-behaved; formally,
we require a number of relatively mild properties to be satisfied.
In order to introduce the assumptions, we need to introduce some terminology and notation.

Recall that depending on the application,
the numbers of vertices $n_1,\ldots,n_k$ in each of the $k$ classes may be random, or some may be random and others deterministic. For example,
if we consider the standard bipartite factor graph of a binomial random $r$-uniform hypergraph $H^r\bc{n,p}$,
then one class representing the vertices of $H^r\bc{n,p}$ would have $n_1=n$ vertices deterministically,
while the other class representing the edges of $H^r\bc{n,p}$ would have $n_2\sim \Bin\bc{\binom{n}{r},p}$ vertices.

We seek to model this situation, which we do by introducing a
probability distribution vector $\vprobvec = \bc{\vprobvec_1,\ldots,\vprobvec_k} \in \cP\bc{\NN_0^k}$.
Each $\vprobvec_i$ is a probability distribution on $\NN_0$, although in general they may be dependent on each other.
As mentioned informally earlier, we will also have an implicit parameter $n$, so $\vprobvec=\vprobvec\bc{n}$, and we are interested in asymptotics as $n\to \infty$.
Note that as in the example of factor graphs of hypergraphs above, and in many other examples, we could certainly have $\vprobvec_1=n$ deterministically.
As previously mentioned, we will often conflate random variables and their associated probability distributions; in particular we
will use $n_i$ instead of $\vprobvec_i$.

\begin{definition}\label{def:typedegreesequence} For a $k$-type graph $G$, the type-degree of a vertex $v\in V\bc{G}$, which we denote by $\typedeg\bc{v}$, is the sequence $\bc{i,\typedegind_1, \ldots, \typedegind_k } \in \brk{k}\times \NN_0^k$ where $i$ is the type of $v$ and where 
$\typedegind_j$ is the number of neighbours of $v$ of type $j$. Moreover, the \emph{type-degree sequence} $\vD\bc{G}$ of $G$ is the sequence $ \bc{ \typedeg\bc{v}}_{v \in V\bc{G}}$ of the type-degrees of all the vertices of $G$. 
\end{definition}

This is an obvious generalisation of the standard degree sequence in which we additionally keep track of the types
of the vertices and their neighbours.
We note that for $ \bc{ \typedeg\bc{v}}_{v \in V\bc{G}}$ to be well defined, we need an order for the set of vertices $V\bc{G}$.
Since the order of the type-degree sequence will not play any role in future, we may choose such an order arbitrarily.

We also need to describe the local structure of the graph in terms of a branching process which depends on
the degree distributions $\degdist_1,\ldots,\degdist_k$.

\begin{definition} \label{def:GWproc} Let $\degdist_{1},\ldots,\degdist_k \in \cP\bc{\NN_0^k}$
and for all $\bc{i,j} \in \Admit$, let  $ \offdist{j}{i}$ be as in Definition~\ref{def:offspringdist}.
 For each $i \in \brk{k}$, let $\cT_i:=\cT_i \bc{\degdist_1, \ldots, \degdist_k}$ denote a $k$-type Galton-Waltson process defined as follows:
	\begin{enumerate}
		\item The process starts with a single vertex $u$ of type $i$.
		\item Generate children of $u$ with types according to $\degdist_i$.  
		\item Subsequently, starting from the children of $u$, further vertices are produced recursively according to the following rule: for every  vertex $w$ of type $h$ with a parent $w'$ of type $\ell$, generate children of $w$ with types according to $\offdist{\ell}{h}$ independently.
	\end{enumerate}
	Moreover, for  $r \in \NN_0$ we denote by $\cT_{i}^r$  the branching process $\cT_{i}$  truncated at depth $r$.

\end{definition}

	It will be part of our assumptions on $\wpg$ that the branching processes $\cT_i$ do indeed describe the local structure of $\wpg$ \whp.
To quantify this statement, we will need to compare the distributions of the $\cT_i$
with the empirical local structure of $\wpg$.
 Given a $k$-type graph $G$, a vertex $u \in V\bc{G}$ and $r\in\NN_0$,
 let $\ball_{G}\bc{u,r}$ be the $k$-type subgraph of $G$ induced by the neighbourhood of $u$ up to depth $r$
 (i.e.\ all vertices that can be reached by a path of length at most $r$ from $u$), rooted at the vertex $u$.  
We say that two (vertex-)rooted $k$-type graphs $G$ and $G'$ are \emph{isomorphic},
which we denote by $G \cong G'$, if there exists a graph isomorphism between $G$ and $G'$ which preserves the roots and the types of the vertices.
Let $\SetErootedG_\star$ be the set of isomorphism classes of (vertex-)rooted $k$-type graphs (or more precisely, a set consisting
of one representative from each isomorphism class). We define the following empirical neighbourhood distribution for a given $k$-type graph $G$.

\begin{definition}\label{def:EmpiricalNbdist} Let $G$ be a $k$-type graph with parts $V_1\bc{G}, \ldots, V_k\bc{G}$, let $i \in \brk{k}$ and $r \in \NN_0$.
Then for a graph $H \in \SetErootedG_\star$, we define
	$$
	\empNBdist_{i,r}^{G} \bc{ H } :=\frac{1}{ \abs{V_i\bc{G}}   } \sum_{u \in V_i\bc{G}  } \vec{1}\cbc{\ball_G\bc{u,r} \cong H }.
	$$
\end{definition}
In other words, $\empNBdist_{i,r}^{G} \bc{ H }$ is the proportion of vertices in the class $V_i\bc{G}$ whose $r$-depth
neighbourhood in $G$ is isomorphic to $H$.
When the graph $G$ is clear from the context, we will drop the superscript $G$ in $ \empNBdist_{i,r}^{G}$.  

Note that $\empNBdist_{i,r}^{G}$ defines a probability distribution on the class of rooted $k$-type graphs $H$ of depth at most $r$,
and therefore it can be compared with the truncated branching processes $\cT_{i}^r$, which we will do in
Assumption~\ref{AssumeNew} (specifically~\ref{assumpnew:dtv}).
This assumption lays out the various properties that are required for our proofs.
For parameters $a=a\bc{n}$ and $b=b\bc{n}$, we sometimes use the notation $a\ll b$
as a shorthand for $a=o\bc{b}$, and similarly $a\gg b$ for $b=o\bc{a}$. \newpage

\begin{assumption}\label{AssumeNew}
There exist functions
\begin{equation}\label{eq:choiceDelta0}
1\ll \Delta_0 = \Delta_0\bc{n} \ll n^{1/10}
\end{equation}
and $\zeta = \zeta\bc{x} \xrightarrow{x\to \infty} \infty$
and a probability distribution vector $\degdistset:=\bc{\degdist_1,\dots, \degdist_k} \in \bc{\cP\bc{\NN_0^k}}^k$ 
such that
for all $i \in \brk{k}$ and for all $x \in \RR$, we have
\begin{equation} \label{eq:degdisttailbound}
\Pr\bc{ \norm{\degdist_{i}}_1 > x } \le \exp\bc{-\zeta\bc{x}\cdot x}, 
\end{equation}
and such that the random graph $\wpg$ satisfies the following properties:
\begin{enumerate}[label=\textbf{A\arabic*}]
	\item \label{PP:NConcentration} For all $i \in \brk{k}$ we have $\Erw\bc{n_i}= \Theta\bc{n}$ and $\Var\bc{n_i} = o\bc{n^{8/5}}$.
	\item \label{assump:Eqlikely} For any two simple $k$-type graphs $G$ and $H$ satisfying $\vD\bc{G}=\vD\bc{H}$,
		we have $\Pr\bc{ \wpg = G}= \bc{ 1 +o\bc{1} }\Pr\bc{ \wpg = H}$.  
	\item \label{assump:Delta} \Whp\ $\Delta\bc{\wpg} \le \Delta_0$;
	\item \label{assumpnew:dtv} For any $i \in \brk{k}$ and $r \in \NN_0$ we have
		$$ \dTV\bc{\empNBdist_i^r\bc{\wpg}, \mathcal{T}_{i}^r\bc{\degdistset}} \ll \frac{1}{\Delta_0^2} \quad \mbox{ \whp}$$
\end{enumerate}
\end{assumption}

Note that informally,~\ref{assumpnew:dtv} states that
the local structure of $\wpg$ is asymptotically described by the branching processes $\bc{ \cT_{i}}_{i \in \brk{k}}$ with speed of convergence
faster than $1/\Delta_0^2$.
For most random graph models, it is rather easy to verify that~\eqref{eq:choiceDelta0},~\eqref{eq:degdisttailbound}
and~\ref{PP:NConcentration},~\ref{assump:Eqlikely},~\ref{assump:Delta} hold with the appropriate choice of parameters,
and the main difficulty is to bound the speed of convergence of the local structure as required by~\ref{assumpnew:dtv}.

\subsection{Choosing the parameters}\label{sec:parameters}

Given that the truth of the assumptions is fundamentally dependent on the choice of the parameters $\Delta_0,\zeta,\degdistset$,
for which there may be many possibilities, let us briefly discuss how best to choose them.

\subsection*{The probability distribution vector $\degdistset$}
First observe that given the graph model $\GG$, due to~\ref{assumpnew:dtv} there is only one sensible choice for the probability distribution vector
$\degdistset$,
namely the one which describes the local structure of $\GG$ (in the sense of local weak convergence).
For example,
in the case of the Erd\H{o}s-R\'enyi binomial random graph $G\bc{n,d/n}$ for some constant $d$,
we have $k=1$ would choose $\degdistset = \degdist_1 = \degdist_{11}$
to be the $\Po\bc{d}$ distribution. On the other hand, for the analogous balanced bipartite random graph $G\bc{n,n,d/n}$
we would set 
$\degdistset = \bc{\degdist_1,\degdist_2}$, where $\degdist_1=\bc{\degdist_{11},\degdist_{12}} = \bc{0,\Po\bc{d}}$ and similarly for $\degdist_2$.

\subsection*{The function $\zeta$}
This function only appears in the restriction, given by~\eqref{eq:degdisttailbound}, that the tail bounds of the $\degdist_i$ distributions
decay super-exponentially fast.
As such, we can simply set $\zeta\bc{x} := \min_{i \in \brk{k}}\frac{-\ln \Pr\bc{ \norm{\degdist_{i}}_1 > x }}{x}$ for all $x$.
The assumption demands that this expression tends to infinity.

\subsection*{The degree bound $\Delta_0$}
The most critical property of $\Delta_0$ is~\ref{assump:Delta}, which states that \whp\ it is an upper bound on the maximum degree of $\GG$.
To make the task of proving~\ref{assumpnew:dtv} easier,
it is most convenient to choose $\Delta_0$ as small as possible such that~\ref{assump:Delta} is satisfied.
However, if in fact a bounded $\Delta_0$ would suffice for this purpose (for example when considering
random $d$-regular graphs), we would  choose $\Delta_0$ tending to infinity arbitrarily
slowly in order to ensure that the lower bound in~\eqref{eq:choiceDelta0} is satisfied. In fact, the condition $\Delta_0 \gg 1$
in~\eqref{eq:choiceDelta0}
is imposed purely for technical convenience later on, and (by choosing $\Delta_0$ to grow
arbitrarily slowly if necessary) does not actually impose any additional restrictions on the random model.

A typical non-regular scenario would be that we have $\Theta\bc{n}$ vertices whose degrees are Poisson distributed with
bounded expectation, in which case we could choose $\Delta_0=\ln n$.

\bigskip

Assumption~\ref{AssumeNew} actually contains a further hidden parameter which, for simplicity,
we just chose to be $1/5$. More precisely, we have the following.

\begin{remark}
In Assumption~\ref{AssumeNew}, the conditions
\ref{PP:omegadeltanpower} and~\eqref{eq:choiceDelta0} can be replaced by the assumption that there exists some constant $0<\beta<1/3$
such that:

\begin{tabular}{rl}
\textbf{\eqref{eq:choiceDelta0}'} &  $1 \ll \Delta_0 \ll n^{\beta/2} $; \\
\textbf{\eqref{PP:NConcentration}'} &  For all $i\in\brk{k}$, we have $\Erw\bc{n_i}=\Theta\bc{n}$, and $\mbox{Var} \bc{n_i } = o\bc{n^{2\bc{1-\beta}}}$.
\end{tabular}

\end{remark}
In Assumption~\ref{AssumeNew} we arbitrarily chose $\beta=1/5$ since the only additional restrictions this places on the model~$\wpg$,
once we account for being able to choose other parameters appropriately, are that \whp\ $\Delta\bc{\wpg} \ll n^{1/10}$
and\linebreak[4] $\Var\bc{ \abs{V_i}}= o\bc{n^{8/5}}$. It seems unlikely that there will be a natural model $\wpg$ for which this fails to hold,
but for which it would be true for some different choice of $\beta$. Nevertheless, the proof would still go through in the more general case.

Let us make one further remark regarding~\ref{assump:Eqlikely}, which states that any two graphs with the same type-degree sequence
are asymptotically equally likely under $\GG$.
 This condition is not satisfied for certain natural random graph models,
for example random triangle-free graphs. However, a standard trick allows us to weaken the conditions a little
such that this model would indeed be covered.

\begin{remark}\label{rem:assconditioning}
Assumption~\ref{AssumeNew} can be replaced by the following:\\
There is a random graph model $\GG^*$ and an event $\cE$ such that
\begin{itemize}
\item $\Pr_{\GG^*}\bc{\cE} = \Theta\bc{1}$;
\item $\GG \sim \GG^*|_\cE$, i.e.\ $\GG^*$ conditioned on $\cE$ is precisely $\GG$;
\item $\GG^*$ satisfies Assumption~\ref{AssumeNew}.
\end{itemize}
\end{remark}

So for example when $\GG$ is the random triangle-free graph, we would choose $\GG^*$ to be the unconditioned random graph,
and $\cE$ to be the event that $\GG^*$ is triangle-free. The reason the proof still goes through is that our results can be applied to $\GG^*$
and give a high probability statement, which then also holds \whp\ in the space conditioned on the $\Theta\bc{1}$-probability event $\cE$.
We omit the details.

\subsection{Some simple consequences}

We next collect a few consequences of the assumptions that will be convenient later.
Assumption~\ref{AssumeNew} guarantees the existence of some parameters,
but we will need to fix more for the proof. Specifically, we have the following.

	\begin{proposition}\label{prop:Assume1}
	If Assumption~\ref{AssumeNew} holds, then
	there exists a function $F:[0,\infty) \rightarrow [1,\infty)$ and functions $\omega_0=\omega_0\bc{n}, c_0 = c_0\bc{n},d_0=d_0\bc{n}$ such that:
\begin{enumerate}[label=\textbf{F\arabic*}]
\item \label{PP:F} $F$ is monotonically increasing and invertible;
\item \label{PP:Fseq} For any sequences of real numbers $a=a\bc{n}$ and $b=b\bc{n}$, if $1\le a \ll b$ then $F\bc{a} \ll F\bc{b}$;
\item \label{PP:Fmon} For any sequence of real numbers $a=a\bc{n}\gg 1$ and for any constant $c >0$ we have $F\bc{a} \gg \exp\bc{ ca }$;
\item \label{PP:FandZ} There exists a sufficiently large $x_0\ge 0$ such that for all $x > x_0$ and all $i\in\brk{k}$, we have 
$$ \Pr\bc{ \norm{\degdist_{i}}_1 > x } \leq \frac{1}{F\bc{x}}. $$
\end{enumerate}
	Moreover,
\begin{enumerate}[label=\textbf{P\arabic*}]
	\item \label{PP:omegadeltanpower}\label{PP:omegan} $1 \ll \Delta_0^2 \ll \omega_0 \ll n^{1/5} $;
	\item \label{PP:choiced0} $F^{-1}\bc{\Delta_0^2} \ll d_0 \ll \ln\omega_0$;
	\item \label{PP:choicec0} $\Delta_0 \exp\bc{Cd_0}, \Delta_0^2  \ll c_0 \ll F\bc{d_0},  \omega_0$
		for any constant $C$, 
\end{enumerate}
and the random graph $\GG$ satisfies the following.
	\begin{enumerate}[label=\textbf{B\arabic*}] 	
			\item \label{assump:LocS}  
			 For any $i \in \brk{k}$ and $r \in \NN_0$ we have 			$$ \dTV\bc{\empNBdist_i^r\bc{\wpg}, \mathcal{T}_{i}^r\bc{\degdistset}} \le \frac{1}{\omega_0} \quad \mbox{ \whp}$$ 
	\end{enumerate}
	\end{proposition}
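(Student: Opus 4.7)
\medskip

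The plan is to construct $F$ first, then $\omega_0$, then $d_0$, and finally $c_0$, at each stage using only the data from Assumption~\ref{AssumeNew} together with the parameters already chosen. For $F$: starting from the function $\zeta$ of~\eqref{eq:degdisttailbound}, I would form its nondecreasing majorant $\psi(x) := \inf_{y \ge x} \zeta(y)$, which still tends to infinity, and set $F(x) := \exp(x\psi(x)/2) + x$; the $+x$ guarantees strict monotonicity and hence invertibility, giving~\ref{PP:F}. The remaining three conditions are short verifications: \ref{PP:Fseq} uses $F(b)/F(a) \ge \exp((b-a)\psi(a)/2)$ together with $\psi(a) \to \infty$; \ref{PP:Fmon} follows from $\psi(a)/2$ eventually exceeding any fixed $c > 0$; and \ref{PP:FandZ} comes from $\psi(x) \le \zeta(x)$ combined with~\eqref{eq:degdisttailbound}.

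For $\omega_0$: by~\ref{assumpnew:dtv}, for each fixed pair $(i,r)$ and each $\epsilon > 0$ we have $\Pr\bc{\dTV\bc{\empNBdist_i^r\bc{\wpg},\cT_i^r\bc{\degdistset}} > \epsilon/\Delta_0^2} \to 0$, and a standard diagonalisation over the countable family of relevant pairs produces a single function $\eta = \eta(n) \to 0$ such that $\dTV \le \eta/\Delta_0^2$ holds whp for every fixed $(i,r)$. Setting $\omega_0 := \min\bigl(\Delta_0^2/\sqrt{\eta},\ n^{1/6}\bigr)$ then yields $\Delta_0^2 \ll \omega_0 \ll n^{1/5}$ (the upper bound using $\Delta_0 \ll n^{1/10}$, hence $\Delta_0^2 \ll n^{1/5}$) and $\dTV \le 1/\omega_0$ whp, establishing~\ref{PP:omegan} and~\ref{assump:LocS}.

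For $d_0$ and $c_0$: since $F$ grows faster than any exponential, $F^{-1}(\Delta_0^2) = o(\ln \Delta_0) \ll \ln \omega_0$, so $d_0 := \sqrt{F^{-1}(\Delta_0^2)\,\ln \omega_0}$ satisfies $F^{-1}(\Delta_0^2) \ll d_0 \ll \ln \omega_0$, giving~\ref{PP:choiced0}. The computation underpinning~\ref{PP:choicec0} is that with $y := F^{-1}(\Delta_0^2)$ one has $y\psi(y) = \Theta(\ln \Delta_0)$, so combining $d_0 \gg y$ with the monotonicity of $\psi$ yields
\[
d_0\,\psi(d_0) \;\ge\; (d_0/y)\cdot y\,\psi(y) \;\gg\; \ln \Delta_0.
\]
Hence for every constant $C$, $F(d_0)/(\Delta_0 e^{Cd_0}) \ge \exp(d_0\psi(d_0)/4)/\Delta_0 \to \infty$, while $d_0 = o(\ln \omega_0)$ gives $e^{Cd_0} = \omega_0^{o(1)}$ and thus $\omega_0/(\Delta_0 e^{Cd_0}) \to \infty$. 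Taking $c_0 := \min(F(d_0),\omega_0)/h(n)$ for a sufficiently slowly growing $h(n) \to \infty$ then satisfies all the inequalities in~\ref{PP:choicec0}.

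The main obstacle I anticipate is reconciling the uniform-in-$C$ clause of~\ref{PP:choicec0}: a single $c_0$ must lie above $\Delta_0 e^{Cd_0}$ for \emph{every} constant $C$ and yet below both $F(d_0)$ and $\omega_0$. This forces careful use of the two slow divergences $\psi(d_0) \to \infty$ and $\ln \omega_0 / d_0 \to \infty$, and is the reason that $d_0$ must be chosen strictly inside the interval $\bigl(F^{-1}(\Delta_0^2),\ \ln \omega_0\bigr)$ rather than near either endpoint, and that $F$ must be built to grow strictly super-exponentially (with a diverging factor $\psi(x)$ in the exponent) rather than merely faster than any fixed exponential. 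A secondary technical step is the diagonalisation used to define $\omega_0$, which must handle all pairs $(i,r)$ appearing in later arguments; this is routine because only countably many such pairs, all with $r = O(1)$ at each stage, are needed.
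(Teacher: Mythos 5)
Your construction is essentially the same as the paper's: start from $\zeta$ in~\eqref{eq:degdisttailbound}, replace it by a monotone minorant, exponentiate to get $F$, then take $\omega_0$ slightly larger than $\Delta_0^2$, $d_0$ strictly between $F^{-1}(\Delta_0^2)$ and $\ln\omega_0$, and $c_0$ slightly below $\min(F(d_0),\omega_0)$. The paper likewise fixes $\omega_0 := \Delta_0^2\omega$, $d_0 := F^{-1}(\Delta_0^2)\omega$, $c_0 := \Delta_0^2\omega$ for a common slowly growing $\omega$; your geometric-mean choice of $d_0$ and min-plus-divisor choice of $c_0$ are cosmetic variants that achieve the same sandwiching. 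The approach is sound, but there are a few slips worth noting.

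First, your $\psi(x) := \inf_{y\ge x}\zeta(y)$ is a nondecreasing \emph{minorant}, not a majorant, and it need not be continuous or nonnegative. If $\psi$ is negative on an initial interval then $F(x) = \exp(x\psi(x)/2)+x$ can be decreasing near $0$, and if $\psi$ has a jump then so does $F$, which then fails to be a bijection onto $[1,\infty)$ and hence is not invertible as \ref{PP:F} demands. The paper avoids exactly this by replacing $\zeta_1$ with a $\zeta_2$ that is explicitly chosen to be continuous, strictly increasing and equal to $0$ at $0$; you would need the same smoothing (or else work with a one-sided generalized inverse and check that the later uses of $F^{-1}$ tolerate it).

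Second, your definition $\omega_0 := \min\bigl(\Delta_0^2/\sqrt{\eta},\,n^{1/6}\bigr)$ does not in general satisfy $\Delta_0^2 \ll \omega_0$. The hypothesis gives only $\Delta_0 \ll n^{1/10}$, i.e.\ $\Delta_0^2 \ll n^{1/5}$; nothing prevents $\Delta_0^2 = n^{0.19}$, which exceeds $n^{1/6}$, so the minimum can land below $\Delta_0^2$ and \ref{PP:omegadeltanpower} fails. The paper's choice $\omega_0 := \Delta_0^2\omega$ (with $\omega\to\infty$ as slowly as necessary to preserve both the upper bound $\omega_0 \ll n^{1/5}$ and \ref{assump:LocS}) is automatically $\gg \Delta_0^2$. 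Replacing $n^{1/6}$ by, say, $\sqrt{\Delta_0^2\,n^{1/5}}$, or simply adopting the multiplicative form $\Delta_0^2\omega$, repairs this. With these two fixes your argument goes through; the remaining diagonalisation steps (uniformity in $(i,r)$ for $\eta$, uniformity in $C$ for $h$) are handled correctly and mirror the paper's treatment.
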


For the rest of the paper, we will fix parameters $\Delta_0,\omega_0,c_0,d_0$ and a function $F$ as in Assumption~\ref{AssumeNew}
and Proposition~\ref{prop:Assume1}.
An obvious consequence of~\eqref{PP:choicec0} is that for any constant $t_0$,
	\begin{equation}\label{eq:secondchoicec0}
	\max\{d_0,\Delta_0\}\cdot \abs{\alphabet}^{2\bc{t_0 +2}d_0}\le \Delta_0 \cdot \abs{\alphabet}^{2\bc{t_0 +3}d_0} = o\bc{c_0},
	\end{equation}
	and this form will often be the most convenient in applications.
Before proving Proposition~\ref{prop:Assume1}, we prove an auxiliary claim which will be helpful both for this proof and later in the paper.

\begin{claim}\label{PP:FDelta} 
If \ref{PP:omegadeltanpower}, \ref{PP:F} and \ref{PP:Fmon} hold, then  $F^{-1}\bc{\Delta_0^2} \ll \ln\omega_0$. 
\end{claim}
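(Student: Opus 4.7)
The plan is to set $a := F^{-1}\bc{\Delta_0^2}$ and show directly that $a = o\bc{\ln\omega_0}$. First I would verify that $a$ is well defined and tends to infinity: since $\Delta_0^2 \gg 1$ by~\ref{PP:omegadeltanpower} and $F$ is monotonically increasing by~\ref{PP:F}, if $a$ were bounded by some constant $C$ we would have $F\bc{a}\le F\bc{C}$ bounded, contradicting $F\bc{a}=\Delta_0^2\to\infty$. Hence $a \gg 1$, so we are allowed to feed $a$ into~\ref{PP:Fmon} as a sequence tending to infinity.

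The key step, which I expect to be the main (and essentially only) obstacle, is converting the super-exponential lower bound on $F$ in~\ref{PP:Fmon} into a logarithmic upper bound on its inverse. Fix an arbitrary constant $c>0$. Since $a \gg 1$, property~\ref{PP:Fmon} yields $F\bc{a}\gg\exp\bc{ca}$, equivalently $\exp\bc{ca}=o\bc{\Delta_0^2}$. Taking logarithms then gives $a\le \tfrac1c\ln\bc{\Delta_0^2}$ for all sufficiently large $n$ (the threshold depending on $c$). As $c>0$ was arbitrary, this forces $a=o\bc{\ln\bc{\Delta_0^2}}=o\bc{\ln\Delta_0}$.

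Finally I would combine this with $\Delta_0^2\ll\omega_0$ from~\ref{PP:omegadeltanpower}, which implies $\Delta_0\le\omega_0$ (hence $\ln\Delta_0\le\ln\omega_0$) for $n$ large, and conclude
\[
\frac{a}{\ln\omega_0}\;\le\;\frac{a}{\ln\Delta_0}\;\longrightarrow\;0,
\]
which is the desired statement $F^{-1}\bc{\Delta_0^2}\ll\ln\omega_0$. The whole argument reduces to the elementary observation that super-exponential growth of $F$ in~\ref{PP:Fmon} forces sublogarithmic growth of $F^{-1}$, while the comparison $\Delta_0^2\ll\omega_0$ already ensures $\ln\Delta_0\le\ln\omega_0$, so no further work is needed.
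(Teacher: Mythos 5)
Your proof is correct and rests on the same two ingredients as the paper's: the super-exponential growth bound~\ref{PP:Fmon} and the comparison $\Delta_0^2 \ll \omega_0$ from~\ref{PP:omegadeltanpower}, mediated by monotonicity of $F$ from~\ref{PP:F}. The organization differs slightly: the paper argues by contradiction, assuming $F^{-1}(\Delta_0^2)\ge c\ln\omega_0$ along a subsequence and plugging $c\ln\omega_0$ into~\ref{PP:Fmon} to force $\Delta_0^2\gg\omega_0$; you instead plug $a:=F^{-1}(\Delta_0^2)$ itself into~\ref{PP:Fmon}, read off the directly stronger bound $F^{-1}(\Delta_0^2)=o(\ln\Delta_0)$, and only then compare $\ln\Delta_0$ to $\ln\omega_0$. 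Your version gives marginally more information, but both are instantiations of the same observation that super-exponential growth of $F$ forces sub-logarithmic growth of $F^{-1}$, so there is no essential difference. One small stylistic point: your justification that $a\gg 1$ rules out boundedness along the full sequence but should more precisely rule out boundedness along any subsequence (the negation of $a\to\infty$); the argument you give extends to that case verbatim, so this is only a matter of wording.
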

\begin{proof}
Suppose it is not true that $F^{-1}\bc{\Delta_0^2} \ll \ln\omega_0$. Then (passing to a subsequence of necessary)
there exists some constant $c>0$ such that $F^{-1} \bc{ \Delta_0^2 } \ge c \ln \bc{ \omega_0}$.
Applying $F$ to both sides, we deduce $\Delta_0^2 \ge F \bc{c \ln \bc{\omega_0}}$, since $F$ is monotonically increasing by~\ref{PP:F}.
Moreover, by \ref{PP:Fmon} we have $F\bc{c \ln \bc{\omega_0}} \gg \omega_0$, so we conclude that $\Delta_0^2 \gg \omega_0$,
which contradicts \ref{PP:omegadeltanpower}. 
\end{proof}

In the proof of Proposition~\ref{prop:Assume1}, for simplicity we will allow functions to take the values $\pm \infty$,
and define expressions involving division by $0$ or $\infty$ 
in the obvious way.
This avoids annoying technical complications required to deal with some special cases---turning
this into a formally correct proof would be an elementary exercise in analysis.

\begin{proof}[Proof of Proposition~\ref{prop:Assume1}]
First let us fix $F_1\bc{x}:= \min_{i \in \brk{k}} \frac{1}{ \Pr\bc{ \norm{\degdist_{i}}_1 > x }}$
and observe that $F_1\bc{x} = \exp\bc{\zeta_1\bc{x}\cdot x}$ for some non-negative function $\zeta_1\bc{x} \xrightarrow{x\to \infty}\infty$.
This means that $F_1$ satisfies conditions~\ref{PP:Fmon} and~\ref{PP:FandZ}, but not necessarily conditions~\ref{PP:F} and~\ref{PP:Fseq}.
We therefore modify this function slightly. More precisely, we can modify the function $\zeta_1$ to obtain $\zeta_2$
satisfying:
\begin{itemize}
\item $\zeta_2\bc{0}=0$;
\item $\zeta_2\bc{x}$ is continuous and monotonically strictly increasing;
\item $\zeta_2\bc{x} \le \zeta_1\bc{x}$ for all sufficiently large $x \in \RR$;
\item $\zeta_2\bc{x} \xrightarrow{x\to \infty} \infty$.
\end{itemize}
We now set
$
F\bc{x}:= 
\exp\bc{\zeta_2\bc{x}\cdot x}.
$
It can be easily checked that $F$ satisfies all the necessary conditions.

Now let us set $\omega_0:= \Delta_0^2 \cdot \omega$, where $\omega=\omega\bc{n}$ is a function
tending to infinity arbitrarily slowly. Since $1\ll \Delta_0^2 \ll n^{1/5}$, if $\omega$ grows sufficiently slowly,
\ref{PP:omegadeltanpower} is also satisfied.
Similarly, since~\ref{assumpnew:dtv} is satisfied,
if $\omega$ grows sufficiently slowly, we also have~\ref{assump:LocS}.

We also set $d_0 := F^{-1}\bc{\Delta_0^2}\cdot \omega$.
Then the lower bound in~\ref{PP:choiced0} is clearly satisfied.
Furthermore Claim~\ref{PP:FDelta} shows that the upper bound also holds provided
$\omega$ tends to infinity slowly enough.

Finally we will show that, provided $\omega$ grows slowly enough,
$\Delta_0\exp\bc{ C d_0 } \ll \Delta_0^2  \ll F\bc{d_0},  \omega_0$,
and then picking $c_0 := \Delta_0^2 \cdot \omega$, we have that~\ref{PP:choicec0} holds.

We first recall that $F\bc{x}=\exp\bc{\zeta_2\bc{x}\cdot x}$, where $\zeta_2\bc{x} \xrightarrow{x\to \infty} \infty$.
Thus $F^{-1}\bc{x} = \frac{\ln x}{\zeta_3\bc{x}}$, where $\zeta_3\bc{x} = \zeta_2\bc{F^{-1}\bc{x}} \xrightarrow{x\to \infty} \infty$.
It follows that, for any constant $C>0$, we have $\exp\bc{Cd_0} = \exp\bc{\frac{C\bc{\ln \Delta_0}\omega}{\zeta_3\bc{\Delta_0^2}}} \le \exp\bc{\frac{\bc{\ln \Delta_0}\omega}{\zeta_4\bc{n}}}$
for sufficiently large~$n$ and
for some appropriate function $\zeta_4\bc{n}\xrightarrow{n\to \infty} \infty$ (which is independent of $C$).  
By choosing $\omega \ll \zeta_4$, we have
$\exp\bc{Cd_0} \ll \Delta_0$ and therefore also $\Delta_0\exp\bc{Cd_0} \ll \Delta_0^2$.
Now to complete the proof, observe that $d_0 \gg F^{-1}\bc{\Delta_0^2}$ by definition, and therefore~\ref{PP:Fseq} implies that
$\Delta_0^2 \ll F\bc{d_0}$.
On the other hand, $\Delta_0^2 \ll \omega_0$ by definition of $\omega_0$.
\end{proof}

A further consequence of the assumptions is that the degree distributions have bounded moments.

\begin{remark}\label{P7}\label{rem:finitemoments}
	Claim \ref{PP:FDelta} and~\ref{PP:FandZ} together imply that for all $i \in \brk{k}$,
	the distribution $\|\degdist_{i}\|_1$ of the total degree of a vertex of type $i$ has
	finite moments, i.e.\ $\Erw\bc{\|\degdist_{i}\|_1^s}$ is finite for any $s \in \NN$,
	and in particular for any $i,j\in \brk{k}$ and $s \in \NN$ the moment $\Erw\bc{\degdist_{ij}^s}$ are finite. It also follows that for every admissible pair $\bc{i,j} \in \Admit$, the moments $\Erw\bc{ \norm{\offdist{j}{i}}_1^s}$ are finite
		(this can be verified with an elementary check). We will often use these facts during the proofs.
\end{remark}

We will also need the simple observation that the class sizes are reasonably concentrated around their expectations.
\begin{claim}\label{claim:classsizes}
\Whp\ for all $i \in \brk{k}$ we have
$
n_i = \bc{1+o\bc{\frac{1}{\omega_0}}}\Erw\bc{n_i}.
$
\end{claim}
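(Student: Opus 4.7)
The plan is to apply Chebyshev's inequality to each $n_i$, using the variance bound in~\ref{PP:NConcentration} together with the upper bound on $\omega_0$ in~\ref{PP:omegadeltanpower}, and then take a union bound over the (constantly many) types $i \in \brk{k}$.

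Concretely, since $\omega_0 \ll n^{1/5}$, I would first fix an auxiliary function $\omega_1 = \omega_1\bc{n} \to \infty$ slowly enough that $\omega_0 \omega_1 \ll n^{1/5}$; for instance $\omega_1 := \bc{n^{1/5}/\omega_0}^{1/2}$ works, giving $\omega_0 \omega_1 = \omega_0^{1/2} n^{1/10} \ll n^{1/5}$ (using $\omega_0 \ll n^{1/5}$ once more). For each $i \in \brk{k}$, set $t_i := \Erw\bc{n_i}/\bc{\omega_0 \omega_1}$ and apply Chebyshev:
\[
\Pr\bc{\abs{n_i - \Erw\bc{n_i}} \ge t_i} \le \frac{\Var\bc{n_i}}{t_i^2} = \frac{o\bc{n^{8/5}}\bc{\omega_0\omega_1}^2}{\Erw\bc{n_i}^2} = \frac{o\bc{n^{8/5}}\bc{\omega_0\omega_1}^2}{\Theta\bc{n^2}} = o\bc{1},
\]
where the last equality uses $\bc{\omega_0\omega_1}^2 \ll n^{2/5}$ together with $\Erw\bc{n_i} = \Theta\bc{n}$ from~\ref{PP:NConcentration}.

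A union bound over the $k$ types (which is a constant number) then yields that \whp\ $\abs{n_i - \Erw\bc{n_i}} \le \Erw\bc{n_i}/\bc{\omega_0\omega_1}$ simultaneously for all $i \in \brk{k}$. Since $\omega_1 \to \infty$, this rearranges to $n_i = \bc{1 + o\bc{1/\omega_0}}\Erw\bc{n_i}$, as required. There is no real obstacle here: the only subtle point is to verify that one can genuinely find an $\omega_1 \to \infty$ witnessing $\omega_0\omega_1 \ll n^{1/5}$, which follows purely from the hypothesis $\omega_0 \ll n^{1/5}$ guaranteed by~\ref{PP:omegadeltanpower}.
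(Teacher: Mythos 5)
Your proof is correct and takes essentially the same approach as the paper: both rely on Chebyshev's inequality together with the variance bound of~\ref{PP:NConcentration} and the hierarchy $\omega_0\ll n^{1/5}$ from~\ref{PP:omegadeltanpower}, and then conclude by a union bound over the constantly many types. The only cosmetic difference is that you choose a threshold $t_i=\Erw\bc{n_i}/\bc{\omega_0\omega_1}$ tailored to give the relative error directly, while the paper first establishes the absolute bound $\abs{n_i-\Erw\bc{n_i}}\le n^{4/5}$ and then observes this implies the claimed relative error; both routes are equivalent.
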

\begin{proof}
By \ref{PP:NConcentration}, for all $i \in \brk{k}$, we have  $\Erw\bc{n_i} = \Theta\bc{n}$ and $\Var\bc{ n_i } = o\bc{n^{8/5}}$.
Let $\omega = \omega\bc{n} := \frac{n^{8/5}}{\max_{i \in \brk{k}}\Var\bc{n_i}}$, so in particular $\omega \to \infty$.
(Note that if $\Var\bc{n_i}=0$ for all $i$, then the claim is trivial, so we may assume that $\omega$ is well-defined.)
Then
Chebyshev's inequality implies that
$$
\Pr\bc{\big|n_i-\Erw\bc{n_i}\big|\ge n^{4/5}}
\le \Pr\bc{\big|n_i-\Erw\bc{n_i}\big|\ge \sqrt{\omega\cdot \Var\bc{n_i}}} \le \frac{1}{\omega} = o\bc{1}.
$$
In other words, \whp\ 
$
n_i= \bc{1+ O\bc{\frac{1}{n^{1/5}}}}\Erw\bc{n_i}
$,
and
since $\omega_0\ll n^{1/5}$ by~\ref{PP:omegadeltanpower}, taking a union bound over all $i \in \brk{k}$ gives the desired result.
\end{proof}

\section{An alternative model}\label{sec:apg}

Although our main result is primarily a statement about $\wpg$,
a key method in this paper is to switch focus away from this model to a second model, denoted $\apg$,
which is easier to analyse. To introduce this second model, we need some more definitions.

\subsection{Message histories}
Let $\messgraphs{n}$ denote the set of \emph{$\alphabet$-messaged graphs} on vertex set $[n]$,
i.e.\ graphs on $[n]$ in which each edge $uv$ comes equipped with directed messages
$\mu_{u\to v},\mu_{v\to u} \in \alphabet$.

We will denote by $\mu_{u\to v}\bc{t}$ the message from $u$ to $v$
after $t$ iterations of $\WP$, and refer to this as the \emph{\messaget{t}} from $u$ to $v$.
Alternatively, we refer to the \emph{\inmessaget{t}} at $v$ or the \emph{\outmessaget{t}} at $u$
(this terminology will be especially helpful later when considering half-edges).
In all cases, we may drop $t$ from the notation if it is clear from the context.

In fact, we will need to keep track not just of the current Warning Propagation messages along each edge,
but of the entire history of messages.
For two adjacent vertices $u,v$, define
the \emph{\historyt{t}\ from $u$ to $v$} to be the vector
$$
\messagehistory{u}{v}{t} := \bc{\mu_{u\to v}\bc{0},\ldots ,\mu_{u\to v}\bc{t}} \in \alphabet^{t+1}.
$$
We will also refer to $\messagehistory{u}{v}{t}$ as the \emph{\instoryt{t}} at $v$,
and as the $\emph{\outstoryt{t}}$ at $u$. The \emph{\storyt{t}} at $v$
consists of the pair $\bc{\messagehistory{u}{v}{t},\messagehistory{v}{u}{t}}$, i.e.\ the \instoryt{t}
followed by the \outstoryt{t}. It will sometimes be more convenient to consider the sequence consisting
of the \instoryt{t} followed by just the \outmessaget{0}, which we call the \emph{\inpt{t}}.
In all cases, we may drop $t$ from the notation
if it is clear from the context.

We denote by $\histgraphs{n}{t}$ the set of $\alphabet^{t+1}$-messaged graphs on vertex set $[n]$ --
the labels along each directed edge, which come from $\alphabet^{t+1}$, will be the
\historiest{t}. \footnote{Note that the definition of $\histgraphs{n}{t}$ makes no assumption that the
\histories\ along directed edges arise from running Warning Propagation -- in principle, they
could be entirely inconsistent -- although of course in our applications, this will indeed be the case.}

With a slight abuse of notation, for $t_1< t_2$ we will identify two graphs
$G \in \histgraphs{n}{t_1}$ and $H \in \histgraphs{n}{t_2}$, whose messages are given by $\mu^{\bc{G}}$ and $\mu^{\bc{H}}$ respectively,
if
\begin{itemize}
\item $E\bc{G}=E\bc{H}$;
\item $\mu^{\bc{G}}_{u\to v}\bc{t} = \mu^{\bc{H}}_{u\to v}\bc{t}$ for all $t \le t_1$;
\item $\mu^{\bc{H}}_{u\to v}\bc{t} = \mu^{\bc{H}}_{u\to v}\bc{t_1}$ for all $t_1 < t \le t_2$.
\end{itemize}
In other words, the underlying graphs are identical, the \historiest{t_1} are identical, and subsequently no messages change in $H$.
In particular, this allows us to talk of \emph{limits} of messaged graphs $G_t \in \histgraphs{n}{t}$ as $t \to \infty$.

\begin{definition}\label{def:wpg}
For any $t \in \NN$ and probability distribution matrix $\initialdistribution$ on $\alphabet$,
let $\wpg_{t} = \wpg_{t}\bc{n,\initialdistribution} \in \histgraphs{n}{t}$ be the random $\alphabet^{t+1}$-messaged graph produced as follows.
\begin{enumerate}
\item Generate the random graph $\wpg$.
\item   Initialise each message $\mu_{u \to v}\bc{0}$ for each directed edge $\bc{u,v}$ independently at random according to
$\initialdistribution[i,j]$ where $i$ and $j$ are the types of $u$ and $v$ respectively.
\item Run Warning Propagation for $t$ rounds according to update rule $\wpf$.
\item Label each directed edge $\bc{u,v}$ with the \story\ $\bc{\mu_{u\to v}\bc{0},\ldots,\mu_{u\to v}\bc{t}}$ up to time $t$.
\end{enumerate}
We also define $\wpg_* := \lim_{t\to \infty}\wpg_t$, if this limit exists.
\end{definition}

We aim to move away from looking at $\wpg_t$
and instead to consider a random graph model $\apg_t$ in which we first generate half-edges at every vertex,
complete with \stories\ in both directions, and only subsequently reveal which
half-edges are joined to each other; thus we construct a graph in which the
WP messages are known a priori. The trick is to do this in such a way that the
resulting random messaged graph looks similar to $\wpg_t$.

In order to define this random model, we need a way of generating a \history\ randomly, but accounting for
the fact that the entries of a \history\ are, in general, heavily dependent on each other, which we do in
Definition~\ref{def:GWmsghistory}. We first need to define a variant of the $\cT_i$ branching trees.

An \emph{edge-rooted graph} is a simple graph with a distinguished directed edge designated as root edge.
When we have an edge-rooted \emph{tree} rooted at the directed edge $\bc{u,v}$, we will think of $v$ as the parent of $u$, and in all such
situations
$v$ will have no other children. More generally, whenever we talk of messages along an edge of such a  tree,
we mean along the directed edge from child to parent.

	We will also need to describe the
	part of the local structure that influences a message along a directed edge $\bc{u,v}$. This motivates the following definition.

\begin{definition}\label{def:GWprocedge}
	Let $\degdist_{1},\ldots,\degdist_k$ be probability distributions on $\NN_0^k$ and for all $i,j \in \brk{k}$, let  $\offdist{j}{i}$ be as in
	Definition~\ref{def:offspringdist}. For each $\bc{i,j} \in \Admit$, let $\cT_{ij}:=\cT_{ij} \bc{\degdist_1, \ldots, \degdist_k}$ denote a $k$-type Galton-Waltson process defined as follows:
	\begin{enumerate}
		\item The process starts with a directed root edge $\bc{u,v}$ where $u$ has type $i$ and $v$ has type $j$.
		We refer to $v$ as the parent of $u$, and $v$ will have no further children.
		\item Subsequently, starting at $u$, vertices are produced recursively according to the following rule: for every vertex $w$ of type $h$ with a parent $w'$ of type $\ell$, generate children of $w$ with types according to $\offdist{\ell}{h}$ independently.
	\end{enumerate}
Moreover, for  $r \in \NN_0$ we denote by  $\cT_{ij}^r$ the branching  $\cT_{ij}$ truncated at depth $r$.
\end{definition}

	Note that the process $\cT_{ij}$ can equivalently be produced by taking the process $\cT_i$ conditioned on the root $u$
having at least one child $v$ of type $j$, deleting the entire subtree induced by the descendants of $v$ and rooting the resulting tree at the directed edge $\bc{u,v}$.

\begin{definition} \label{def:GWmsghistory}
Given a probability distribution matrix $Q$ on $\alphabet$, for each $i,j \in \brk{k}$ we define random variables
$X_{ij}^{\bc{0}}, X_{ij}^{\bc{1}}, X_{ij}^{\bc{2}}, \ldots$ as follows.
Let $T_{ij}$ be a randomly generated instance of the process $\mathcal{T}_{ij}$ defined in Definition~\ref{def:GWprocedge}.
	\begin{enumerate}
		\item Initialise all messages in $T_{ij}$ according to $Q$. 
		\item For each $t \in \NN_0$, let $X_{ij}^{\bc{t}}:=\mu_{u \rightarrow v}\bc{t}$ be the message from $u$ to $v$ after $t$ iterations of Warning Propagation according to the update rule $\wpf$ where $v$ is the root of $T_{ij}$ and $u$ its only child.
	\end{enumerate}

Finally, for each $t \in \NN_0$, let $\vdistf{t}\bc{Q}$ be the probability distribution matrix $R$ on $\alphabet^{t+1}$
where each entry $\Matrdist{R}{i,j}$ is the distribution of \ $\bc{ X_{ij}^{\bc{0}}, \ldots, X_{ij}^{\bc{t}}}$. 
As in Definition~\ref{def:distribMulti}, in order to ease notation, we sometimes denote $\vdistf{t}\bc{Q}$ by $\vdistphi{Q}{t}$. 
\end{definition}
Note that $\vdistphi{Q}{t}$ is \emph{not} a vector $\bc{Q^{\bc{0}}, \ldots, Q^{\bc{t}}}$ of probability distribution matrices,
but is instead a matrix in which every entry is a probability distribution on vectors of length $t+1$.

Note also
 that while it is intuitively natural to expect that the marginal distribution of $\Matrdist{\vdistphi{Q}{t}}{i,j}$ on the $\ell$-th entry has the distribution of $\Matrdist{\distphi{Q}{\ell}}{i,j}$,
which motivates the similarity of the notation, this fact is not completely trivial.
We will therefore formally prove this in Claim~\ref{claim:localhistorydistribution}.

\subsection{The random construction}
We define the \emph{\incompilationt{t}} at a vertex $v$ to be the multiset of \inpst{t} at $v$,
and the \emph{\incompseqt{t}} is the sequence of \incompilationst{t} over all vertices of $[n]$.
As before, we often drop the parameter $t$ from the terminology when it is clear from the context.

We can now define the alternative random graph model to which we will switch our focus.

\begin{definition}\label{def:apg}

Given a probability distribution matrix $\initialdistribution$ on $\alphabet$, a sequence $\degdist=\bc{ \degdist_{1}, \ldots, \degdist_k}$
of probability distributions on $\NN_0^k$,
a probability distribution vector
$\vprobvec = \vprobvec\bc{n} \in \cP\bc{\NN_0^k}$ and an integer $t_0$, we construct a random messaged graph
$\apg_{t_0} = \apg_{t_0}\bc{n, \vprobvec, \degdist,\initialdistribution}$
by applying the following steps.

\begin{enumerate}
\item \label{def:apgTypes} Generate $n_1,\ldots,n_k$ according to the probability distribution vector $\vprobvec$, and for each $i \in \brk{k}$
generate a vertex set $V_i$ with $\abs{V_i}=n_i$.
\item \label{def:apgHalf}  For each $i \in \brk{k}$ and for each vertex $v$ in $V_i$ independently, generate an \incompilation\ by: 
\begin{enumerate}
	\item Generating half edges with types $ \bc{i,j}$ for each $j\in \brk{k}$ according to $\degdist_i$;
	\item Giving each half-edge of type $\bc{i,j}$ a \instoryt{t_0}  
	according to $\Matrdist{\vdistphi{\initialdistribution}{t_0}}{j,i}$ independently;
	\item  Giving each half-edge of type $\bc{i,j}$ a \outmessaget{0} according to $\Matrdist{\initialdistribution}{i,j}$ independently of each other and of the \instories.
\end{enumerate}
\item \label{def:apgInstory}Generate \outmessagest{t} for each time $1\le t \le t_0$
according to the rules of Warning Propagation based on the \inmessagest{(t-1)},
i.e.\ if the \instoriest{t_0} at $v$,
from dummy neighbours $u_1,\ldots,u_j$, are $\messagehistory{u_i}{v}{t_0}$,
we set
$$\mu_{v \to u_i}\bc{t} = \wpf\left(\mset{\mu_{u_1 \to v}\bc{t-1},\ldots,\mu_{u_{i-1} \to v}\bc{t-1},\mu_{u_{i+1} \to v}\bc{t-1},\ldots,\mu_{u_j \to v}\bc{t-1}}\right).$$
\item \label{def:apgMatch}  Consider the set of matchings of the half-edges which
are maximum subject to the following conditions:
\begin{itemize}
\item Consistency: a half-edge with \instory\ $\vec \mu_{1} \in \alphabet^{t_0+1}$ and \outstory\ $\vec \mu_{2} \in \alphabet^{t_0+1}$ 
is matched to a half-edge with \instory\ $\vec \mu_{2}$ and \outstory\ $\vec \mu_{1}$;
\item Simplicity: the resulting graph (ignoring unmatched half-edges) is simple.
\end{itemize}
Select a matching uniformly at random from this set and delete the remaining unmatched half-edges.
\end{enumerate}
\end{definition}

From now on we will always implicitly assume that the choice of various parameters is the natural one to compare
$\apg_{t_0}$ with $\wpg_{t_0}$, i.e.\ that $\vprobvec$ is precisely the distribution of the class sizes of $\wpg$
and $\degdist$ is the probability distribution vector which describes the local structure of $\wpg$ as required in Assumption~\ref{AssumeNew},
while $\initialdistribution$ will be the probability distribution matrix according to which we initialise messages in $\wpg$.

We will show later (Claim~\ref{claim:outstorydist}) that the distribution of an \outstory\ is
identical to the distribution of an \instory, which means that the expected number of half-edges
with \story\ $\bc{\vec \mu_1,\vec \mu_2}$ is (almost) identical to the expected number of half-edges
with the dual \story\ $\bc{\vec \mu_2,\vec \mu_1}$. Heuristically, this suggests that almost
all half-edges can be matched up and therefore few will be deleted in Step~\ref{def:apgMatch}.
This will be proved formally in Proposition~\ref{prop:unmatched}.

\begin{remark}\label{rem:outdetermined}
Note that Step~\ref{def:apgInstory} of the construction is an entirely deterministic one --
the \outmessagest{t} at time $t\ge 1$ are fixed by the incoming messages
at earlier times. Therefore all \instories\ and \outstories\ (before the deletion of half-edges)
are in fact determined by the outcome of the random construction in Steps~\ref{def:apgTypes} and~\ref{def:apgHalf}.
\end{remark}

\subsection{Contiguity}

Observe that $\apg_{t_0}$ and $\wpg_{t_0}$ both define random variables in $\histgraphs{n}{t_0}$.
With a slight abuse of notation, we also use $\apg_{t_0}$ and $\wpg_{t_0}$ to denote
the \emph{distribution} of the respective random variables.
Given a $\alphabet^{t+1}$-messaged graph $G \in \histgraphs{n}{t}$,
we will denote by $\proj{G}$ the $\alphabet$-messaged graph in $\messgraphs{n}$
obtained by removing all messages from each history except for the message at time $t$,
i.e.\ the ``current'' message.

There are two main steps in the proof of Theorem~\ref{thm:main}:
\begin{enumerate}
\item Show that $\apg_{t}$ and $\wpg_{t}$ have similar distributions for any constant $t\in \NN$
(Lemma~\ref{lem:contiguity}).
\item Use this approximation to show that, for some large constant $t_0\in \NN$,
the messaged graphs $\proj{\wpg}_{t_0}$ and
$\proj{\wpg}_{*}$ are also very similar, i.e.\ very few further changes are made after
$t_0$ steps of Warning Propagation.
\end{enumerate}

In particular, we must certainly choose $t_0$ to be large enough that $\distf^{t_0}\bc{\initialdistribution}$ is very close to the stable WP limit $P$ of $\initialdistribution$.
It will follow that the distribution of a message
along a randomly chosen directed edge in $\proj{\apg}_{t_0}$ (and therefore also in $\proj{\wpg}_{t_0}$) of type $\bc{i,j}$ is approximately $\Matrdist{P}{i,j}$
(see Claim~\ref{claim:localhistorydistribution}).

We need a way of quantifying how ``close'' two messaged graphs are to each other.
Given sets $A$ and $B$, we use $A\Delta B := \bc{A\setminus B} \cup \bc{B \setminus A}$ to denote
the symmetric difference.

\begin{definition}\label{def:graphsclose}
Given $t\in \NN_0$, two $\alphabet^{t+1}$-messaged graphs $G_1,G_2 \in \histgraphs{n}{t}$ and $\delta >0$, we say that $G_1 \close{\delta} G_2$ if:
\begin{enumerate}
\item $E\bc{G_1}\Delta E\bc{G_2} \le \delta n$;
\item The messages on $E\bc{G_1}\cap E\bc{G_2}$ in the two graphs agree except on a set of size at most $\delta n$.
\end{enumerate}
We further say that $G_1 \vclose{\delta} G_2$ if in fact
the underlying graphs are identical (i.e.\ $E\bc{G_1}\Delta E\bc{G_2}=\emptyset$).
\end{definition}

The crucial lemma that justifies our definition of the $\apg$ model is the following.

\begin{lemma}\label{lem:contiguity}
For any integer $t_0 \in \NN$ and real number $\delta >0$, the random $\alphabet^{t_0+1}$-messaged graphs $\apg_{t_0},\wpg_{t_0}$ can be coupled
in such a way that \whp\ $\apg_{t_0} \close{\delta} \wpg_{t_0}$.
\end{lemma}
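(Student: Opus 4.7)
The plan is to construct an explicit coupling of $\apg_{t_0}$ and $\wpg_{t_0}$ in stages, bounding the discrepancy contributed at each stage so that the total stays below $\delta n$ w.h.p. First I would couple the class sizes $(n_1,\ldots,n_k)$ identically in both models by using the same sample from $\vprobvec$; Claim~\ref{claim:classsizes} then gives $n_i=\Theta(n)$ w.h.p.\ and contributes no discrepancy.

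Next I would couple the type-degree sequences. In $\apg_{t_0}$ the type-degrees at each vertex (before the matching step) are independent samples from $\degdist_i$, whereas for $\wpg_{t_0}$, Assumption~\ref{assumpnew:dtv} applied at depth one says the empirical type-degree distribution within each $V_i$ lies within total variation $o(1/\Delta_0^2)$ of $\degdist_i$ w.h.p. Standard concentration for i.i.d.\ samples from $\degdist_i$ (using the tail bound~\eqref{eq:degdisttailbound} to handle rare high-degree vertices) gives the same for $\apg_{t_0}$, so the two type-degree sequences can be coupled to agree on all but $o(n)$ vertices. Conditional on this, I would use~\ref{assump:Eqlikely} to replace $\wpg$ by a uniform random simple $k$-type graph with the given type-degree sequence, and invoke Proposition~\ref{prop:unmatched} (to be proved later) to ensure that only $o(n)$ half-edges are discarded in step~\ref{def:apgMatch} of the $\apg_{t_0}$ construction; the resulting matching is then essentially uniform among simple graphs with that sequence. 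A configuration-model switching argument then yields a coupling of the two underlying graphs agreeing on all but $O(\delta n)$ edges.

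It remains to couple the message histories. The key point is that in $\wpg_{t_0}$, the history $\messagehistory{u}{v}{t_0}$ along a directed edge $(u,v)$ of type $(i,j)$ is entirely determined by the labelled subgraph of radius $t_0$ around $(u,v)$, the initial messages on it, and $\wpf$. By Definition~\ref{def:GWmsghistory} this has distribution $\Matrdist{\vdistphi{\initialdistribution}{t_0}}{i,j}$ when the local structure is isomorphic to $\cT_{ij}^{t_0}$, which is precisely the distribution used to generate histories a priori in $\apg_{t_0}$. Applying~\ref{assumpnew:dtv} at depth $t_0+1$, together with~\ref{assump:Delta} to bound the number of edges whose $t_0$-neighbourhood contains a cycle, shows that for all but $o(n)$ directed edges this local structure is a tree isomorphic to $\cT_{ij}^{t_0}$. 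On this tree-like bulk the histories decouple given the local structure, so an edge-by-edge coupling matches all but $O(\delta n)$ of them.

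The main obstacle is this final step: while the marginal distribution of a single history is controlled cleanly by~\ref{assumpnew:dtv}, in $\wpg_{t_0}$ the histories on nearby edges are strongly correlated through shared neighbourhoods, whereas in $\apg_{t_0}$ they are generated essentially independently (with only the mild dependence introduced by the consistency requirement on the matching). Reconciling these two correlation structures, and carefully bookkeeping the error introduced by short cycles, exceptional non-tree-like neighbourhoods and unmatched half-edges, is where the main technical work will lie.
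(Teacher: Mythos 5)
Your first two steps (coupling class sizes, coupling type-degree sequences) are fine and agree in spirit with the paper. The genuine gap is in the final step, and it is structural rather than merely technical: you propose to couple the underlying \emph{unmessaged} graphs first (via a configuration-model switching argument) and then couple the message histories conditionally on the graphs. This ordering does not match how $\apg_{t_0}$ is actually built. In $\wpg_{t_0}$ one first has the graph, then random initialisation, then deterministically the histories via $\WP$; in $\apg_{t_0}$ one first has histories on half-edges (drawn a priori from $\vdistphi{\initialdistribution}{t_0}$) and only then a matching constrained to be consistent with those histories. If you condition first on the underlying graph, the conditional law of the histories in $\apg_{t_0}$ becomes awkward to describe, and, as you yourself flag, you have no mechanism for reconciling the strong spatial correlations of histories in $\wpg_{t_0}$ with the essentially independent pre-generation in $\apg_{t_0}$. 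Waving at ``edge-by-edge coupling on the tree-like bulk'' does not discharge this: that is precisely the main technical content of the lemma.

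The paper resolves this by choosing a different object to couple and reversing the order of revelation in $\wpg_{t_0}$ so that it mirrors the construction of $\apg_{t_0}$. The object coupled is the \emph{\incompseq}, i.e.\ for each vertex the multiset of its \inpst{t_0} (incoming history plus outgoing message at time $0$). Revealing this first, then the matching of half-edges, is legitimate in $\wpg_{t_0}$ because of Claim~\ref{claim:samehistseqsameprob} (a consequence of~\ref{assump:Eqlikely}): any two graphs with the same \incompseq\ are asymptotically equally likely under $\wpg$, so conditioned on the \incompseq\ the matching is essentially uniform among consistent matchings --- which is exactly how $\apg_{t_0}$ chooses its matching. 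With that in hand, Proposition~\ref{prop:incominghistoryconcentration} and Corollary~\ref{cor:similarhistories} couple the \incompseqs\ up to $o(n)$ vertices; Proposition~\ref{prop:WPconsistent} shows the \outstories\ then agree \emph{deterministically} wherever the \incompilations\ agree (so no separate coupling of histories is needed); Proposition~\ref{prop:unmatched} bounds the deleted half-edges; and a per-story coupling of the two near-uniform matchings handles the final step. The missing idea in your write-up is precisely this use of the \incompseq\ as the coupling variable and the observation that~\ref{assump:Eqlikely} licenses revealing it before the matching in $\wpg_{t_0}$; without it, the correlation problem you identify remains open.

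Two smaller points. First, you cite~\ref{assump:Eqlikely} only at the level of type-degree sequences, but what is actually needed (and what Claim~\ref{claim:samehistseqsameprob} supplies) is invariance given the full \incompseq, which refines the type-degree sequence; the type-degree-sequence version alone is too weak for your step to go through. Second, even on the tree-like bulk it is not true that ``histories decouple given the local structure'' across different edges: two directed edges whose $t_0$-neighbourhoods overlap share randomness, and this overlap is generic, not exceptional. The paper avoids ever having to argue independence across edges precisely by working at the level of per-vertex \incompilations\ and a global matching.
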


This lemma is proved in Section~\ref{sec:contiguity}.

\subsection{Message Terminology}

We have introduced several pieces of terminology related to messages in the graph, which we recall and collect here for easy reference.
For a fixed time parameter $t\in \NN$ and a directed edge, the \emph{\historyt{t}} is the sequence
of messages at times $0,1,\ldots,t$ along this directed edge.
Further, for a (half-)edge or set of (half-)edges incident to a specified vertex,
we have the following terminology.
\begin{itemize}
\item The \emph{\inmessaget{t}} is the incoming message at time $t$.
\item The \emph{\outmessaget{t}} is the outgoing message at time $t$.
\item The \emph{\instoryt{t}} is the sequence of \inmessagest{t'} for $t'=0,\ldots,t$.
\item The \emph{\outstoryt{t}} is the sequence of \outmessagest{t'} at times $t'=0,\ldots,t$.
\item The \emph{\storyt{t}} is the ordered pair consisting of the \instoryt{t} and \outstoryt{t}.
\item The \emph{\inpt{t}} is the ordered pair consisting of the \instoryt{t} and \outmessaget{0}.
\item The \emph{\incompilationt{t}} is the multiset of \inpst{t} over all half-edges at a vertex.
\item The \emph{\incompseqt{t}} is the sequence of \incompilationst{t} over all vertices.
\end{itemize}

When the parameter $t$ is clear from the context, we often drop it from the terminology.

\section{Preliminary results}\label{sec:prelim}

We begin with some fairly simple observations which help to motivate some of the definitions made
so far, or to justify why they are reasonable.
The first such observation provides a slightly simpler way of describing the individual ``entries'', i.e.\ the marginal
distributions, of the probability distribution $\vdistf{t}\bc{\initialdistribution}[i,j]\in \cP\bc{\alphabet^{t+1}}$.

	\begin{claim}\label{claim:localhistorydistribution}
		For any $t',t \in \NN_0$ with $t' \le t$ and for any $i,j\in\brk{k}$, the marginal distribution of $\Matrdist{\vdistf{t}\bc{\initialdistribution}}{i,j}$ on the ${t'}$-th entry 
		is precisely $\Matrdist{\distf^{t'}\bc{\initialdistribution}}{i,j}$, i.e.\ for any $\mu\in \alphabet$ we have
		$$
			\Pr\bc{\Matrdist{\bc{\Matrdist{\vdistf{t}\bc{\initialdistribution}}{i,j}}}{t'}=\mu} =
		\left(\sum_{\substack{\vom=\bc{\mu_0,\ldots,\mu_t} \in \alphabet^{t+1}\\ \mu_{t'}=\mu}} \Pr\bc{\Matrdist{\vdistf{t}\bc{\initialdistribution}}{i,j} = \vom}\right) = \Pr\bc{\Matrdist{\distf^{t'}\bc{\initialdistribution}}{i,j}=\mu}.
		$$
\end{claim}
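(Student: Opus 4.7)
The plan is to prove this by induction on $t'$, exploiting the recursive structure of the Galton--Watson tree $\cT_{ij}$ together with the definition of $\distf$. Note first that the marginal on the $t'$-th coordinate does not depend on $t$ as long as $t\ge t'$: the message $\mu_{u\to v}\bc{t'}$ on the root edge of $\cT_{ij}$ is a deterministic function of the multiset of messages into $u$ at earlier times, and hence of the random choices made during steps $0,1,\ldots,t'$ only. So without loss of generality I can take $t = t'$ and simply show that the distribution of $\mu_{u\to v}\bc{t'}$ equals $\Matrdist{\distf^{t'}\bc{\initialdistribution}}{i,j}$.

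For the base case $t' = 0$, by step~(1) of Definition~\ref{def:GWmsghistory} the message $\mu_{u\to v}\bc{0}$ along the root edge of $\cT_{ij}$ is drawn directly from $\Matrdist{\initialdistribution}{i,j} = \Matrdist{\distf^0\bc{\initialdistribution}}{i,j}$, as required.

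For the inductive step, assume the claim holds at level $t'-1$ for every pair $\bc{h,\ell}\in\Admit$. In $\cT_{ij}$, the children of $u$ are generated with types according to $\offdist{j}{i}$; for each such child $w$ of type $h$, the subtree hanging off $w$ (together with the directed edge $\bc{w,u}$) is distributed exactly as the branching process $\cT_{hi}$, because in both processes $w$ has a parent of type $i$ and therefore spawns its own children according to $\offdist{i}{h}$, and so on recursively. Moreover, the subtrees attached to the different children of $u$ are independent, and their messages are also initialised independently. Hence by the inductive hypothesis, the messages $\mu_{w\to u}\bc{t'-1}$ sent to $u$ from its children at time $t'-1$ are independent, with each type-$h$ child contributing a message distributed according to $\Matrdist{\distf^{t'-1}\bc{\initialdistribution}}{h,i}$. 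Consequently the multiset of \inmessagest{(t'-1)} at $u$ has exactly the distribution $\MSet{\offdist{j}{i}}{\Matrdist{\distf^{t'-1}\bc{\initialdistribution}}{i}}$ from Definition~\ref{def:GenMulti}.

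Applying the WP update rule gives $\mu_{u\to v}\bc{t'} = \wpf\bc{\MSet{\offdist{j}{i}}{\Matrdist{\distf^{t'-1}\bc{\initialdistribution}}{i}}}$, which by Definition~\ref{def:distribMulti} is precisely $\Matrdist{\distf\bc{\distf^{t'-1}\bc{\initialdistribution}}}{i,j} = \Matrdist{\distf^{t'}\bc{\initialdistribution}}{i,j}$, closing the induction. The only point that needs real attention is verifying the distributional identity between the subtree of $\cT_{ij}$ rooted at an edge from a type-$h$ child and the process $\cT_{hi}$; this is essentially immediate from the recursive clause in Definition~\ref{def:GWprocedge}, but it is the one place where keeping track of the parent type matters, so I would state it as a brief lemma (or a one-line observation) before invoking the inductive hypothesis.
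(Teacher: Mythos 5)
Your proposal is correct and follows essentially the same route as the paper: reduce to the marginal at time $t'$, then induct on $t'$, using the recursive self-similarity of $\cT_{ij}$ (a type-$h$ child $w$ of $u$ heads a subtree distributed as $\cT_{hi}$) together with independence of the child-subtrees to identify the multiset of \inmessagest{(t'-1)} at $u$ with $\MSet{\offdist{j}{i}}{\Matrdist{\distf^{t'-1}\bc{\initialdistribution}}{i}}$, and then apply Definition~\ref{def:distribMulti}. The only cosmetic difference is that you make the reduction ``WLOG $t=t'$'' explicit up front, whereas the paper does this implicitly by rewriting the marginal sum as $\Pr\bc{X_{ij}^{(t')}=\mu}$ before starting the induction.
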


\begin{proof}
	Using the notation from Definition~\ref{def:GWmsghistory}, we have 
		\begin{align*}	\sum_{\substack{\vom=\bc{\mu_0,\ldots,\mu_t} \in \alphabet^{t+1}\\ \mu_{t'}=\mu}} \Pr\bc{\Matrdist{\vdistf{t}\bc{\initialdistribution}}{i,j} = \vom}  &= 	\sum_{\substack{\vom=\bc{\mu_0,\ldots,\mu_t} \in \alphabet^{t+1}\\ \mu_{t'}=\mu}}\Pr\bc{X_{ij}^{\bc{0}}=\mu_0, \ldots, X_{ij}^{\bc{t}}=\mu_t}
		= \Pr\bc{X_{ij}^{\bc{t'}}=\mu}.\\
		\end{align*}
		We will prove by induction that $\Pr\bc{X_{ij}^{\bc{t'}}= \mu} = \Pr\bc{\Matrdist{\distf^{t'}\bc{\initialdistribution}}{i,j} = \mu}$.
		For $t'=0$, again using Definition~\ref{def:GWmsghistory} the distribution of $X_{ij}^{\bc{0}}$ is simply $\Matrdist{\initialdistribution}{i,j}$,
		so suppose that $t'\ge 1$, that the result holds for $0,\ldots,t'-1$ and for any pair $\bc{h,\ell} \in \brk{k}^2$.
		Let $x_1,\ldots , x_d$ be the children of the root node $u$
		in the $\cT_{ij}$ branching tree defined in Definition~\ref{def:GWprocedge} so the numbers and types of the children
		are given by the distribution $\offdist{j}{i}$. By the recursive nature of the $\cT_{ij}$ branching tree and the induction hypothesis,
		the message from any $x_m$ of type $h$ to $u$ at time $t'-1$ has distribution $\Matrdist{\distf^{t'-1}\bc{\initialdistribution}}{h,i}$
		and this is independent for all vertices. Thus, in order to get the message from $u$ to $v$ at time $t'$,
		we generate a multiset of messages $\MSet{\offdist{j}{i}}{\distf^{t'-1}\bc{\initialdistribution}[i]}$
		as in Definition~\ref{def:GenMulti} and apply the Warning Propagation rule $\wpf$.
		By Definition~\ref{def:distribMulti},
		the distribution of $\wpf \bc{ \MSet{\offdist{j}{i}}{\Matrdist{\distf^{t'-1}\bc{\initialdistribution}}{i}} } $
		is
		$\Matrdist{\distf\bc{\distf^{t'-1}\bc{\initialdistribution}}}{i,j}=\Matrdist{\distf^{t'}\bc{\initialdistribution}}{i,j}$.
\end{proof}

\begin{claim}\label{claim:outstorydist} 
	Given a half-edge of type $\bc{i,j}$ at a vertex  $u$ of type $i$ in the graph $\apg_{t_0}$
	\emph{before} any half-edges are deleted,
	the distribution of its \outstory\ is given by $\Matrdist{\vdistf{t_0}\bc{\initialdistribution}}{i,j}$ .
\end{claim}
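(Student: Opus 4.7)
The plan is to match the construction of the out-story at $h$ in $\apg_{t_0}$ step by step with the procedure generating $\Matrdist{\vdistphi{\initialdistribution}{t_0}}{i,j}$ from Definition~\ref{def:GWmsghistory} on the tree $\cT_{ij}$; once the two recipes are identified, the claim is immediate.

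First I would unpack what actually determines the out-story of $h$ under the $\apg_{t_0}$ construction. By Step~(\ref{def:apgHalf}) of Definition~\ref{def:apg} the out-message of $h$ at time $0$ is drawn independently from $\Matrdist{\initialdistribution}{i,j}$, and by Step~(\ref{def:apgInstory}) the out-message at each subsequent time $1 \le t \le t_0$ is the deterministic image under $\wpf$ of the in-messages at time $t-1$ at the \emph{other} half-edges incident to $u$, which are just the $(t-1)$-st coordinates of the in-stories attached to those half-edges. In particular, the in-story of $h$ itself plays no role in its own out-story. Next I would pin down the joint law of these inputs: conditional on $u$ carrying the distinguished half-edge of type $\bc{i,j}$, the vector of types of the remaining half-edges at $u$ has distribution $\offdist{j}{i}$ by the equivalent characterisation of $\offdist{j}{i}$ spelled out right after Definition~\ref{def:offspringdist}; and by Step~(\ref{def:apgHalf}) again, conditional on those types the in-stories of the other half-edges are mutually independent, with a half-edge of type $\bc{i,\ell}$ carrying an in-story distributed as $\Matrdist{\vdistphi{\initialdistribution}{t_0}}{\ell,i}$, independently of the fresh out-message of $h$ at time $0$.

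Finally I would observe that this is exactly the recipe used to generate $\Matrdist{\vdistphi{\initialdistribution}{t_0}}{i,j}$ in Definition~\ref{def:GWmsghistory}. Indeed, in $\cT_{ij}$ the children of the root vertex $u$ have types distributed according to $\offdist{j}{i}$, and each child $x$ of type $\ell$ roots an independent instance of $\cT_{\ell i}$, so by definition of $\vdistphi{\initialdistribution}{t_0}$ the message sequence $x \to u$ over times $0,\dots,t_0$ has distribution $\Matrdist{\vdistphi{\initialdistribution}{t_0}}{\ell,i}$, independently across children; the initial message $\mu_{u\to v}\bc{0}$ is drawn from $\Matrdist{\initialdistribution}{i,j}$; and for $t \ge 1$ the message $\mu_{u\to v}\bc{t}$ is computed by the same $\wpf$ rule from the time-$(t-1)$ incoming messages. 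Identifying the children of $u$ in $\cT_{ij}$ with the other half-edges at $u$ in $\apg_{t_0}$, the inputs to the computation of the out-story have the same joint law and the deterministic update is identical, so the out-story of $h$ coincides in distribution with $\Matrdist{\vdistphi{\initialdistribution}{t_0}}{i,j}$. The only genuinely nontrivial point is the conditioning step that produces the $\offdist{j}{i}$ distribution for the remaining half-edges at $u$; everything else is a direct transcription of the definitions.
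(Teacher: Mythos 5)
Your proof is correct and takes essentially the same route as the paper: both proofs identify the $\apg_{t_0}$ construction at the vertex $u$ with the top level of a $\cT_{ij}$ tree, using the recursive structure of $\cT_{ij}$ (children types $\sim \offdist{j}{i}$, each child rooting an independent $\cT_{\ell i}$ subtree whose message to $u$ is $\sim \Matrdist{\vdistphi{\initialdistribution}{t_0}}{\ell,i}$) so that the deterministic $\wpf$-update produces the same out-story distribution. The only cosmetic difference is direction: the paper glues independent $\cT_{r_{d'}i}$ trees onto the half-edges to build a full $\cT_{ij}$, while you decompose $\cT_{ij}$ into its subtrees and check the inputs match.
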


We note also that \emph{after} half-edges are deleted, this distribution will remain
asymptotically the same, since \whp\ only $o\bc{n}$ half-edges will be deleted
(see Proposition~\ref{prop:unmatched}).

\begin{proof}
Given such a half-edge at $u$, let us add a dummy vertex $v$ of type $j$ to model the corresponding neighbour of~$u$.
Apart from $\bc{u,v}$, the vertex $u$ has some number $d$ of half-edges with types connected  to dummy vertices $c_1, ..., c_d$ generated according to $ \offdist{j}{i}$.
For each $d' \in [d]$, let $r_{d'}$ be the type of the vertex $c_{d'}$.
Each half-edge  $\bc{c_{d'}, u}$ receives  \instoryt{t_0} according to $\Matrdist{\vdistf{t_0}\bc{\initialdistribution}}{ r_d',i}$.
This is equivalent to endowing each $c_{d'}$ with a $\mathcal{T}_{ r_{d'} i}$  tree independently  where the root edge  is $\bc{c_{d'},u}$, initialising the messages from children to parents in these trees according to $ \initialdistribution$
and running $t_0$ rounds of Warning Propagation.
Combining all these (now unrooted) trees with the additional root edge $\bc{u, v}$,
whose message is also initialised according to $\initialdistribution$ independently of all other messages,
we have a $\mathcal{T}_{ij}$ tree in which all messages are initialised independently
according to $\initialdistribution$.
Then by Definition~\ref{def:GWmsghistory}, $\messagehistory{u}{v}{t_0}$ is distributed as $\Matrdist{\vdistf{t_0}\bc{\initialdistribution}}{i,j}$.   
\end{proof}

Recall that for each $\mu\in\alphabet$, its source and target types are encoded in it.  We define a function to denote these types.
\begin{definition}\label{def:embeddedmsgtype} For a message $\mu\in\alphabet$ with source type $i$ and target type $j$, we define 
	\begin{equation}\label{eq:sourcetarget}
	g\bc{\mu} = \bc{i,j}, \qquad g_1\bc{\mu} = i, \qquad g_2\bc{\mu} = j, \qquad \switch{g}\bc{\mu}=\bc{j,i}.
	\end{equation}

\end{definition}

Recall that not all messages can appear along any edge, and for the same reason not all vectors
of messages are possible as message histories, which motivates the following definition.

\begin{definition}\label{def:consistent_compatible}
	We say that a vector $\vom=\bc{\mu_0, \mu_1, \ldots, \mu_{t}}\in\alphabet^{t+1}$ is \emph{consistent} if the $g\bc{\mu_{t'}}$ are all equal for all $0\le t' \le t$,
	in other words, the source types of the $\mu_{t'}$ are equal and the target types of the $\mu_{t'}$  are equal.
	Let $\Consist{t}\subseteq\alphabet^{t+1}$ be the set of consistent vectors in $\alphabet^{t+1}$.
	For $\vom\in\Consist{t}$ we slightly abuse the notation and define
	$$g\bc{\vom} = g\bc{\mu_0}, \qquad g_1\bc{\vom}=g_1\bc{\mu_0}, \qquad g_2\bc{\vom}=g_2\bc{\mu_0}, \qquad \switch g\bc{\vom} = \switch g\bc{\mu_0}.$$
	Furthermore, we say that $\voone, \votwo\in\Consist{t}$ are \emph{compatible} if
	$g\bc{\voone} = \switch g\bc{\votwo}$, i.e.\ the source type of~$\voone$ is the target type of~$\votwo$ and vice versa.
	Let $\Compat{t}\subseteq \Consist{t}^2$ be the set of directed pairs of compatible vectors.  
\end{definition}

Note that even with this definition, not all consistent vectors are necessarily possible as message histories,
since for example there may be some monotonicity conditions which the vector fails to satisfy.

\begin{definition} Let $Q$ be a probability distribution matrix on $\alphabet$, let $\sigma \in \alphabet$ and $\vec{\mu} \in \Consist{t}$ for some $t \in \NN$. We define $$ \Pr_{\distphi{Q}{t}}\bc{\sigma }:=\Pr\bc{\Matrdist{\distphi{Q}{t}}{g\bc{\sigma}}=\sigma} \mbox{ and  } \, \Pr_{\vdistphi{Q}{t}}\bc{\vec{\mu} }:=\Pr\bc{\Matrdist{\vdistphi{Q}{t}}{g\bc{\vec{\mu}}}=\vec{\mu}}.$$
\end{definition}
In other words, $\Pr_{\distphi{Q}{t}}\bc{\sigma }$ and $\Pr_{\vdistphi{Q}{t}}\bc{\vec{\mu}}$ are the probabilities of obtaining $\sigma$ and $\vec{\mu}$
if we sample from $\distphi{Q}{t}$ and $\vdistphi{Q}{t}$ in the appropriate entry $g\bc{\sigma}$ and $g\bc{\vec{\mu}}$  of those matrices respectively,
the only entries which could conceivably give a non-zero probability.
 
Given an integer $t$ and $\voone,\votwo \in \alphabet^{t+1}$, let
$m_{\voone,\votwo}$ denote the number of half-edges in $\apg_{t}$
with \story\ $\bc{\voone,\votwo}$, i.e.\ with \instory\ $\voone$ and \outstory\ $\votwo$, after Step~\ref{def:apgInstory} of the random construction
(in particular \emph{before} unmatched half-edges are deleted).
Observe that at a single half-edge of type $\bc{i,j}:=\bc{g_1\bc{\voone}, g_2\bc{\voone}}$,  the \instory\ is distributed as
$\Matrdist{\initialdistribution^{\bc{\le t}}}{j,i}$ and by Claim~\ref{claim:outstorydist} the \outstory\ is distributed as  $\Matrdist{\initialdistribution^{\bc{\le t}}}{i,j}$. Moreover, the \instory \ and \outstory \ are independent of each other.
Therefore the probability that the half-edge has \instory\ $\voone$ and \outstory\ $\votwo$ is precisely 
	$$
	q_{\voone,\votwo}:= \begin{cases}
	\Pr_{\initialdistribution^{\bc{\le t}}}\bc{\voone} \cdot \Pr_{\initialdistribution^{\bc{\le t}}}\bc{\votwo} & \mbox{ if } \bc{\voone, \votwo} \in \Compat{t},\\
	0 & \mbox{otherwise.}
	\end{cases}
	$$
The following fact follows directly from the definition of $q_{\voone,\votwo}$.
\begin{fact}
		For any $\bc{\voone, \votwo} \in\alphabet^{t+1}$ we have $q_{\voone, \votwo} =q_{\votwo, \voone}$.
	\end{fact}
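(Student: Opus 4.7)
The plan is to unwind the definition and observe that both the compatibility condition and the product formula are manifestly symmetric in $\voone$ and $\votwo$.

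First I would verify that $\Compat{t}$ is a symmetric relation, that is, $\bc{\voone,\votwo}\in\Compat{t}$ if and only if $\bc{\votwo,\voone}\in\Compat{t}$. By Definition~\ref{def:consistent_compatible}, $\bc{\voone,\votwo}\in\Compat{t}$ means $\voone,\votwo\in\Consist{t}$ and $g\bc{\voone}=\switch g\bc{\votwo}$. Writing $g\bc{\voone}=\bc{i,j}$ and $g\bc{\votwo}=\bc{i',j'}$, the compatibility condition reads $\bc{i,j}=\bc{j',i'}$, i.e.\ $i=j'$ and $j=i'$. This is plainly symmetric in the two arguments, so also $g\bc{\votwo}=\switch g\bc{\voone}$, giving $\bc{\votwo,\voone}\in\Compat{t}$.

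Once symmetry of $\Compat{t}$ is established, the equality $q_{\voone,\votwo}=q_{\votwo,\voone}$ splits into two trivial cases. If $\bc{\voone,\votwo}\notin\Compat{t}$ then also $\bc{\votwo,\voone}\notin\Compat{t}$ and both quantities equal $0$ by definition. If $\bc{\voone,\votwo}\in\Compat{t}$ then both quantities equal
$$
\Pr_{\initialdistribution^{\bc{\le t}}}\bc{\voone}\cdot \Pr_{\initialdistribution^{\bc{\le t}}}\bc{\votwo},
$$
by commutativity of multiplication in $\RR$.

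There is no real obstacle here: the entire content of the fact is the observation that the definition of $q_{\voone,\votwo}$ was deliberately chosen in a form (a product of marginal probabilities, restricted to a symmetric set of pairs) that is symmetric under swapping the two arguments. The fact is recorded because it will be invoked repeatedly later on, when comparing expected counts of half-edges with \story\ $\bc{\voone,\votwo}$ against those with dual \story\ $\bc{\votwo,\voone}$ in the matching step of the $\apg_{t_0}$ construction.
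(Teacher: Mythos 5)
Your proof is correct and matches the paper's (the paper notes the fact "follows directly from the definition" and its commented-out proof unwinds the definition the same way you do). You are slightly more systematic in spelling out that $\Compat{t}$ is a symmetric relation before invoking commutativity, which is a minor improvement in rigour over the paper's terse treatment.
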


We will also define  
\begin{equation}\label{eq:mbardef}
\barm_{\voone, \votwo}:=\begin{cases}
	 \Erw\bc{ \degdist_{g\bc{\voone}}} \Erw\bc{n_{g_1\bc{\voone}}} q_{ \voone, \votwo} & \mbox{ if } \bc{\voone, \votwo} \in \Compat{t}, \\
	 0 & \mbox{ otherwise.}
	\end{cases}
\end{equation}

\begin{claim} \label{claim:ZijandNi}   
		For any $i,j \in \brk{k}$, 
		we have $\Erw\bc{\degdist_{ij}} \Erw\bc{n_{i}}  =
		\bc{1+O\bc{\frac{\Delta_0}{\omega_0}}}\Erw\bc{ \degdist_{ji}} \Erw\bc{n_{j}}$.
		In particular,
		$$
		\barm_{\voone, \votwo} = \bc{1+O\bc{\frac{\Delta_0}{\omega_0}}}\barm_{\votwo, \voone}.
		$$
\end{claim}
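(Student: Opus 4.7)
The second statement follows immediately from the first. If $\voone,\votwo$ are not compatible then both $\barm$'s vanish by definition, while if they are compatible we may write $g\bc{\voone}=\bc{i,j}$ so that $g\bc{\votwo}=\bc{j,i}$, and the preceding Fact gives $q_{\voone,\votwo}=q_{\votwo,\voone}$. Hence the ratio $\barm_{\voone,\votwo}/\barm_{\votwo,\voone}$ reduces to $\Erw\bc{\degdist_{ij}}\Erw\bc{n_i}/\bc{\Erw\bc{\degdist_{ji}}\Erw\bc{n_j}}$, which the first statement controls.

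For the first statement, the plan is a double-counting argument. Define $S_{ij}:=\sum_{v\in V_i\bc{\wpg}}d_j\bc{v}$, where $d_j\bc v$ is the number of type-$j$ neighbors of $v$ in $\wpg$. The handshake identity yields $S_{ij}=S_{ji}$ as random variables, so $\Erw\bc{S_{ij}}=\Erw\bc{S_{ji}}$. The task reduces to showing
\begin{equation*}
\Erw\bc{S_{ij}}=\bc{1+O\bc{\Delta_0/\omega_0}}\Erw\bc{n_i}\Erw\bc{\degdist_{ij}},
\end{equation*}
because the symmetric estimate for $\Erw\bc{S_{ji}}$, combined with the equality above, then yields the claim. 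If $\bc{i,j}$ is inadmissible both sides vanish and there is nothing to prove, so we may assume admissibility; then $\Erw\bc{\degdist_{ij}}$ is a positive constant.

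The heart of the argument is the comparison $S_{ij}/n_i=\Erw_{H\sim\empNBdist_i^1\bc{\wpg}}\brk{d_j\bc H}$ against the branching-process expectation $\Erw\bc{\degdist_{ij}}=\Erw_{H\sim\cT_i^1}\brk{d_j\bc H}$. Let $E$ be the intersection of the high-probability events furnished by Claim~\ref{claim:classsizes}, \ref{assump:Delta} and \ref{assump:LocS}: on $E$ we have simultaneously $n_i=\bc{1+o\bc{1/\omega_0}}\Erw\bc{n_i}$, $\Delta\bc{\wpg}\le\Delta_0$, and $\dTV\bc{\empNBdist_i^1,\cT_i^1}\le 1/\omega_0$. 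Introducing the truncated statistic $f\bc H:=\min\bc{d_j\bc H,\Delta_0}$, the standard inequality $\abs{\Erw_P\brk f-\Erw_Q\brk f}\le\Delta_0\cdot\dTV\bc{P,Q}$, together with the fact that $f=d_j$ under $\empNBdist_i^1$ on $E$ (by the degree bound) and the observation that $\Erw\bc{\bc{\degdist_{ij}-\Delta_0}_+}$ is negligible compared to $\Delta_0/\omega_0$ by the super-exponential decay in \ref{PP:FandZ}, yields on $E$
\begin{equation*}
S_{ij}=\Erw\bc{n_i}\Erw\bc{\degdist_{ij}}\bc{1+O\bc{\Delta_0/\omega_0}}.
\end{equation*}

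The main technical obstacle, which I anticipate being the most delicate part, is passing from this on-$E$ estimate to a bound on the unconditional expectation $\Erw\bc{S_{ij}}$: one must control $\Erw\bc{S_{ij}\vecone_{E^c}}$ despite $\Pr\bc{E^c}$ being only qualitatively $o\bc 1$. I would handle this by bounding $S_{ij}\le 2\abs{E\bc{\wpg}}$ and applying Cauchy--Schwarz, using that the moments of $\norm{\degdist_i}_1$ are all finite (Remark~\ref{rem:finitemoments}) to obtain polynomial-in-$n$ bounds on $\Erw\bc{S_{ij}^2}$; together with quantitative versions of the tail bounds for the individual bad events, derived from the super-exponential degree tails in \ref{PP:FandZ} and the variance bound in \ref{PP:NConcentration}, this makes $\Erw\bc{S_{ij}\vecone_{E^c}}$ small relative to the main-term error $\Theta\bc{n\Delta_0/\omega_0}$, closing the argument.
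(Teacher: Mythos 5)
Your reduction of the second statement to the first is the same as the paper's, and your truncation/total-variation estimate relating the empirical count $S_{ij}$ to $\Erw\bc{n_i}\Erw\bc{\degdist_{ij}}$ on the high-probability event $E$ also matches the paper's computation for $e_{ij}$. The genuine divergence, and the place where your plan breaks, is in how the identity $S_{ij}=S_{ji}$ is exploited.

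The paper uses that identity \emph{pointwise on a single realization}: with high probability $e_{ij}$ is simultaneously $\Erw\bc{n_i}\bc{\Erw\bc{\degdist_{ij}}+O\bc{\Delta_0/\omega_0}}$ and, being equal to $e_{ji}$, also $\Erw\bc{n_j}\bc{\Erw\bc{\degdist_{ji}}+O\bc{\Delta_0/\omega_0}}$; since the intersection of two high-probability events is nonempty, the two deterministic products must agree up to the stated multiplicative factor. No expectation of $e_{ij}$ is ever computed, and the atypical event plays no role. You instead pass to $\Erw\bc{S_{ij}}=\Erw\bc{S_{ji}}$ and then try to prove $\Erw\bc{S_{ij}}=\bc{1+O\bc{\Delta_0/\omega_0}}\Erw\bc{n_i}\Erw\bc{\degdist_{ij}}$, which forces you to control the contribution $\Erw\bc{S_{ij}\vecone_{E^c}}$. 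That is a self-inflicted obstacle, and I do not believe the assumptions let you overcome it. Your Cauchy--Schwarz step needs $\Erw\bc{S_{ij}^2}\Pr\bc{E^c}=o\bc{n^2\Delta_0^2/\omega_0^2}$; since $\Erw\bc{S_{ij}^2}\ge\Erw\bc{S_{ij}}^2=\Theta\bc{n^2}$ (for admissible $\bc{i,j}$), this would require $\Pr\bc{E^c}=o\bc{\Delta_0^2/\omega_0^2}$. But \ref{assump:Delta} and \ref{assump:LocS} are bare \whp\ statements with no quantitative rate, and the tail bound \ref{PP:FandZ} constrains the idealised distributions $\degdist_i$, not the actual degrees of $\wpg$ on the atypical event; in particular there is no a priori finite bound on $\Erw\bc{S_{ij}^2}$ at all, since nothing in the assumptions gives a deterministic cap on $n_i$ or on $\abs{E\bc{\wpg}}$. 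Replacing the expectation identity by the pointwise identity on a typical realization, as the paper does, removes the difficulty entirely.
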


\begin{proof}
Let us fix $i,j \in \brk{k}$.
The statement is trivial if $i=j$, and therefore we may assume that this is not the case.
Let us consider the number of edges of $e_{i,j},e_{j,i}$ of types $\bc{i,j}$ and $\bc{j,i}$ respectively in $\wpg$, which must of course be identical.
This can be expressed as $\sum_{v \in V_i} d_{\wpg,j}\bc{v}$, where $d_{\wpg,j}\bc{v}$ denotes the number of neighbours of
$v$ which have type $j$.

Now for each $d \in \NN$,
define $\cS_d$ to be the family of (vertex-)rooted $k$-type graphs of depth $1$ rooted
at a vertex of type $i$, and with exactly $d$ vertices of type $j$.
Then we have
$$
e_{i,j} = \sum_{v \in V_i} d_{\wpg,j}\bc{v} = \sum_{v \in V_i} \sum_{d \in \NN} d\cdot \vec{1}\cbc{d_{\wpg,j}\bc{v}=d}
= \sum_{v \in V_i} \sum_{d \in \NN} \sum_{H \in \cS_d} d \cdot \vec{1}\cbc{B_\wpg\bc{v,1} \cong H}.
$$
Now conditioning on the high probability event that $n_i=\bc{1+o\bc{\frac{1}{\omega_0}}}\Erw\bc{n_i}$ (see Claim~\ref{claim:classsizes})
and that
there are no vertices of degree larger than $\Delta_0$ (see~\ref{assump:Delta}), we have \whp
$$
e_{i,j} =
 n_i\cdot \bc{ \sum_{d \le \Delta_0} d\sum_{H \in \cS_d} \Pr\bc{\cT_i \cong H}
\pm \Delta_0 \cdot \dTV \bc{\empNBdist_{i,1}^\wpg,\cT_i}}
= n_i \bc{ \sum_{d \le \Delta_0} d\Pr\bc{\degdist_{ij}=d}+O\bc{\frac{\Delta_0}{\omega_0}}}
 = \Erw\bc{n_i}\bc{\Erw\bc{\degdist_{ij}}+O\bc{\frac{\Delta_0}{\omega_0}}}.
$$
By symmetry we also have $e_{i,j} =e_{j,i} =\Erw\bc{n_j}\bc{\Erw\bc{\degdist_{ji}}+O\bc{\frac{\Delta_0}{\omega_0}}}$.
It easily follows that $\Erw\bc{\degdist_{ij}} = 0 \Leftrightarrow \Erw\bc{\degdist_{ji}} = 0$,
in which case the statement follows trivially. On the other hand, if these expectations are non-zero, then we have
$\Erw\bc{\degdist_{ij}}+O\bc{\frac{\Delta_0}{\omega_0}} = \bc{1+O\bc{\frac{\Delta_0}{\omega_0}}}\Erw\bc{\degdist_{ij}}$,
and similarly for $\Erw\bc{\degdist_{ji}}$, so the result follows by rearranging.
\end{proof}

\section{Contiguity: Proof of Lemma~\ref{lem:contiguity}}\label{sec:contiguity}

The aim of this section is to prove Lemma~\ref{lem:contiguity}, the first of our two main steps,
which states that $\apg_{t_0}$ and $\wpg_{t_0}$ have approximately the same distribution.
We begin with an overview.

\subsection{Proof strategy}

The overall strategy for the proof is to show that every step of the construction of $\apg_{t_0}$
closely reflects the situation in $\wpg_{t_0}$. More precisely, the following are the critical steps in the proof.
Recall from Definition~\ref{def:wpg} that $\wpg$ is the underlying \emph{unmessaged} random graph corresponding
to $\wpg_{t_0}$, and similarly let
$\apg$ denote the underlying unmessaged random graph corresponding to $\apg_{t_0}$.
The following either follow directly from our assumptions or will be shown during the proof.
\newpage 

\begin{enumerate}
	\item The vectors representing the numbers of vertices of each type
	in $\apg_{t_0}$ and $\wpg_{t_0}$ are identically distributed.
	\item The local structure of $\wpg$ is described by the $\cT_i$ branching processes for $i \in [k]$. 
	\item After initialising Warning Propagation on $\wpg$ according to $\initialdistribution$ and proceeding for $t_0$ rounds,
	the distribution of the \instory\ along a random edge of type $ \bc{i,j}$ is approximately $\Matrdist{\vdistf{t_0}\bc{\initialdistribution}}{j,i}$.
	\item Given a particular \compseq, i.e.\ multiset of \stories\ (which consist of \instories\ and \outstories) on 
	half-edges at each vertex, 
	each graph with this \compseq\ is almost equally likely to be chosen as $\wpg$. 
	\item If we run Warning Propagation on $\apg$, with initialisation identical to the constructed $0$-messages in $\apg_{t_0}$,
	for $t_0$ steps, \whp\ the message histories are identical to those generated in the construction of $\apg_{t_0}$
	except on a set of $o\bc{n}$ edges.
\end{enumerate}

The first step is trivially true since we chose the vector $\vprobvec$ to be the distribution of the class sizes in $\wpg$.
The second step is simply~\ref{assump:LocS}, and
the third step is a direct consequence of the second
(see Proposition~\ref{prop:incominghistoryconcentration}).  One minor difficulty to overcome
in this step is how to handle the presence of short cycles, which are the main reason the approximations are not exact.  However,
since the local structure is a tree by~\ref{assump:LocS},
\whp\ there are few vertices which lie close to a short cycle (see Claim~\ref{claim:nearshortcycle}).

We will need to show that, while the presence of such a cycle close to a vertex may
alter the distribution of incoming message histories at this vertex (in particular they may no
longer be independent), it does not fundamentally alter which message histories are \emph{possible} (Proposition~\ref{prop:historyposprob}).
Therefore while the presence of a short cycle will change some distributions in its close vicinity,
the fact that there are very few short cycles means that this perturbation will be masked
by the overall random ``noise''.

The fourth step is precisely~\ref{assump:Eqlikely}, while
the fifth step is almost an elementary consequence of the fact that we constructed the message histories
in $\apg_{t_0}$ to be consistent with Warning Propagation (Proposition~\ref{prop:WPconsistent}).
In fact, it would be obviously true that \emph{all} message histories are identical were it not for the fact
that some half-edges may be left unmatched in the construction of $\apg$ and therefore deleted,
which can cause the \outmessages\ along other half-edges at this vertex to be incorrect.
This can then have a knock-on effect, but it turns out (see Proposition~\ref{prop:unmatched}) that \whp\ not too many
edges are affected.

\subsection{Plausibility of inputs}

We begin by showing that,
if we initialise messages in a (deterministic) graph in a way
which is admissible according to $\initialdistribution$,
any \inpt{t_0}\  at a half-edge of type $ \bc{i,j}$ produced by Warning Propagation
has a non-zero probability of appearing under the
probability distribution $\Matrdist{\vdistf{t_0}\bc{\initialdistribution}}{j,i}$.

\begin{proposition}\label{prop:historyposprob}
	Let $G$ be any $k$-type graph in which the type-degree of each vertex of type $i$ has positive
	probability under $\degdist_i$ and let $\bc{u,v}$ be a directed edge of $G$ of type $\bc{i,j}$.
	Suppose that messages are initialised in $G$ arbitrarily subject to the condition
	that each initial message is consistent with the vertex types and has non-zero probability under $\initialdistribution$, i.e.\ for every
	directed edge $\bc{u',v'}$ of type $\bc{i',j'}$, the
	initial message
	$\sigma \in \alphabet$ from $u'$ to $v'$ satisfies $g\bc{\sigma}=\bc{i',j'}$
	and furthermore $\Pr_{\initialdistribution}\bc{\sigma} \neq 0$. 
	Run Warning Propagation with update rule $\wpf$ for $t_0$ steps and 
		let $\muin := \messagehistory{u}{v}{t_0}$ and $\muout := \mu_{v\to u} \bc{0}$ be the resulting \instoryt{t_0} \ and  \outstoryt{0}
		at $v$ along $\bc{u,v}$ respectively.
	
	Then

		$$
		\Pr\Big(\Big( \Matrdist{\vdistf{t_0} \bc{\initialdistribution }}{i,j}, \Matrdist{\initialdistribution}{j,i} \Big)  = \bc{\muin, \muout } \Big)  \neq 0.
		$$
\end{proposition}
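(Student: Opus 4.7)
The plan is to split the joint event into two independent factors. The second factor is immediate: $\muout = \mu_{v\to u}\bc{0}$ is an initial message along an edge of type $\bc{j,i}$ in $G$, so the hypothesis on the initialisation yields $\Pr\bc{\Matrdist{\initialdistribution}{j,i} = \muout} > 0$ directly. Moreover, in the tree of Definition~\ref{def:GWmsghistory} the initial root-message $\mu_{v\to u}\bc{0}$ plays no role in the computation of any $X_{ij}^{\bc{t}}$, since $v$ has no children, so $\muin$ and $\muout$ correspond to independent coordinates of the joint distribution. The non-trivial task is therefore to show $\Pr\bc{\Matrdist{\vdistf{t_0}\bc{\initialdistribution}}{i,j} = \muin} > 0$.

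For this I would build an explicit witness tree $\cT^\star$ of depth $t_0$ that reproduces the WP computation of $\mu_{u\to v}\bc{t_0}$ in $G$, and then show that a sample from $\cT_{ij}^{t_0}$ realises $\cT^\star$ together with matching upward initial messages with positive probability. Concretely, I unfold the WP computation: place $u$ at depth $0$ (with formal parent $v$ of type $j$), place the $G$-neighbours of $u$ other than $v$ at depth $1$, for each such vertex $w$ place its $G$-neighbours other than $u$ at depth $2$, and iterate until depth $t_0$. Equip each upward directed edge with the initial message inherited from the $G$-initialisation. Because the WP update rule depends only on multisets of incoming messages and not on the identity of the underlying vertices, executing WP on $\cT^\star$ returns exactly the history $\bc{\mu_{u\to v}\bc{0},\ldots,\mu_{u\to v}\bc{t_0}} = \muin$ computed on $G$.

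It then remains to check that $\cT_{ij}^{t_0}$ equals $\cT^\star$ together with the correct upward initial messages with positive probability. At the root $u$, writing $\bc{e_1,\ldots,e_k}$ for the $G$-type-degree of $u$ (so $e_j \ge 1$ since $v$ is a $j$-neighbour of $u$), the required offspring vector is $\bc{e_1,\ldots,e_{j-1},e_j-1,e_{j+1},\ldots,e_k}$; by hypothesis $\Pr\bc{\degdist_i = \bc{e_1,\ldots,e_k}} > 0$, and Definition~\ref{def:offspringdist} then yields a positive probability under $\offdist{j}{i}$ for this offspring vector. The same argument applied at each internal vertex $w$ of type $h$ with parent of type $\ell$ gives positive probability under $\offdist{\ell}{h}$ for the required offspring multiset. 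Conditional on the structural match, each upward initial message of $\cT_{ij}^{t_0}$ is drawn independently from the relevant entry of $\initialdistribution$, and the hypothesis on the $G$-initialisation ensures that each prescribed value has positive probability; the downward initial messages are irrelevant to $\muin$ and may be ignored. Since $\cT^\star$ has finitely many vertices and edges up to depth $t_0$ and all the generation steps are mutually independent, the product of these positive probabilities is positive and the proposition follows.

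The only subtle point is that $G$ may contain short cycles, so $\cT^\star$ is a formal unfolding of the WP computation and a single vertex of $G$ may appear at several locations of $\cT^\star$. This causes no issue: the $\cT_{ij}$ process generates each subtree independently and WP only processes multisets of incoming messages, so distinct copies of the same $G$-vertex contribute independent positive-probability events with no consistency constraint imposed across copies.
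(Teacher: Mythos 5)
Your proposal is correct and follows essentially the same route as the paper: your witness tree $\cT^\star$ is exactly the auxiliary unfolding $G'$ used in the paper's proof, and your justification that WP on $\cT^\star$ reproduces $\muin$ is the content of the paper's Claim~\ref{claim:samehistories}, while the positive-probability argument for $\cT_{ij}^{t_0}$ realising the tree and the prescribed initial messages matches the paper's concluding step. The only differences are cosmetic: you make the independence factorisation of $\bc{\muin,\muout}$ and the translation from $\degdist_i$-positivity to $\offdist{j}{i}$-positivity more explicit, where the paper leaves these implicit.
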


\begin{proof}
	We construct an auxiliary tree $G'$,
	in which each vertex has a corresponding vertex in $G$.
	For a vertex $w'$ in $G'$, the corresponding vertex in $G$ will be denoted by $w$.
	We construct $G'$ as follows.
	First generate $u'$ as the root of the tree, along with
	its parent~$v'$.
	Subsequently, recursively for each $t \in \{0\}\cup [t_0-1]$,
	for each vertex $x'$ at distance $t$ below $u'$ with parent $y'$, we generate
	children for all neighbours of the vertex $x$ in $G$
	except for $y$.
	
	Note that another way of viewing $G'$ is that we replace walks beginning
	at $u$ in $G$ (and whose second vertex is \emph{not} $v$)
	by paths, where two paths coincide for as long as the corresponding
	walks are identical, and are subsequently disjoint.
	A third point of view is to see $G'$ as a forgetful search tree of $G$,
	where (apart from the parent) we don't remember having seen vertices
	before and therefore keep generating new children.

	We will initialise messages in $G'$ from each vertex
	to its parent (and also from $v$ to $u$) according to the corresponding initialisation in $G$,
	and run Warning Propagation with update rule $\wpf$ for $t_0$ rounds.

		Let $ \muin' = {\vec \mu}_{u' \to v'}' \bc{\le t_0}$ be the resulting \instoryt{t_0}   and $\muout'= \mu'_{v' \to u'}\bc{0}$ be the \outstoryt{0} at $v'$
		 along $\bc{u',v'}$ in $G'$. Recall that $\muin$ and $\muout$ are the corresponding \instoryt{t_0} and \outstoryt{0} at $v$ in $G$ . The crucial observation is the following.

	\begin{claim}\label{claim:samehistories}
			$ \muin' = \muin$ and $\muout'=\muout$.
	\end{claim}

		We delay the proof of this claim until after the proof of Proposition~\ref{prop:historyposprob},
		which we now complete. Since each initial message has non-zero probability under $\initialdistribution$, we have
		$\Pr_{\initialdistribution}\bc{ \muout } \neq 0.$
		Recall that $\Matrdist{\vdistf{t_0}\bc{\initialdistribution}}{i,j}$ was defined as the probability distribution
		of $\bc{X_{ij}^{ \bc{0}},\ldots,X_{ij}^{ \bc{t_0}}}$, the message history in a $\cT_{ij}$ tree in which messages are
		initialised according to $\initialdistribution$.
		Therefore the probability that $\Matrdist{\vdistf{t_0}\bc{\initialdistribution}}{i,j}= \muin = \muin'$
		is certainly at least the probability that a $\cT_{ij}^{t_0}$ tree has exactly the structure of $G'$
		(up to depth $t_0$) and that the initialisation chosen at random according to $\initialdistribution$
		is precisely the same as the initialisation in $G'$. Since $G'$ is a finite graph
		whose type-degrees for all vertices not at distance $t_0$ from $u$ has positive probability
		under $\degdist$, there is a positive probability that a random instance of $\cT_{ij}^{t_0}$
		is isomorphic to $G'$.
		Furthermore, since each initial message has a positive probability under $\initialdistribution$,
		the probability of choosing the same initialisation as in $G'$ is also nonzero, as required.
\end{proof}

We now go on to prove the auxiliary claim.

\begin{proof}[Proof of Claim~\ref{claim:samehistories}]
	By construction the \outmessaget{0} at $v'$ along $\bc{v',u'}$ is identical to the corresponding \outmessaget{0} in $G$
	so $\muout' = \muout$. It remains to prove that the \instoriest{t_0} are identical.
	
	For any vertex $x' \in G'\setminus\{ v' \}$, let $x_+'$ denote the parent of $x'$.
	In order to prove Claim~\ref{claim:samehistories}, we will prove a much stronger statement from which the initial claim will follow easily.
	More precisely, we will prove by induction on $t$ that for all
	$x' \in G'\setminus\{ v' \}$,  $\MU'_{x' \rightarrow x_+'}\bc{\leq t} = \MU_{x \rightarrow {x_+}} \bc{\leq t}$. For $t=0$, by construction
	$\mu'_{x' \rightarrow x_+'} \bc{ 0 } = \mu_{x \rightarrow x_+ }\bc{0}$ for any $x' \in G'\setminus\{ v' \}$
	because messages in $G'$ are initialised according to the corresponding initialisation in $G$.
	Suppose that the statement is true  for some $t \leq t_0-1$.
It remains to prove that $\mu'_{x' \rightarrow x_+'}\bc{t+1} = \mu_{x \rightarrow x_+} \bc{t+1}$.
	By the induction hypothesis, $\mu'_{y' \rightarrow  x'} \bc{t} = \mu_{y \rightarrow  x} \bc{t}$ for all $y' \in \partial_{G'}x' \setminus \{ x_+'\}$.
	Hence,
	$$ \mset{ \mu'_{y' \rightarrow  x'} \bc{t} : y' \in \partial_{G'}x' \setminus \{ x_+'\} } =\mset{ \mu_{y \rightarrow  x}\bc{t} : y' \in \partial_{G'}x' \setminus \{ x_+'\} }
	=\mset{ \mu_{z \rightarrow  x}\bc{t} : z \in \partial_{G}x \setminus \{ {x_+}\} },
	$$
	i.e.\ the multisets of incoming messages to the directed edge $\bc{x',x_+'}$ in $G'$ and to the directed edge $\bc{x,x_+}$ in $G$ at time $t$ are identical.
	Therefore also
	$$
	\mu'_{x' \rightarrow x_+'}\bc{t+1} = \wpf\bc{ \mset{ \mu_{y' \rightarrow  x'}\bc{t} : y' \in \partial_{G}x' \setminus \{ x_+'\} } }
	= \wpf\bc{ \mset{ \mu_{z \rightarrow  x}\bc{t} : z \in \partial_{G}x \setminus \{ x_+\} } }=\mu_{x \rightarrow x_+}\bc{t+1},
	$$	
	as required.
\end{proof}

Proposition~\ref{prop:historyposprob} tells us that no matter how strange
or pathological a messaged graph looks locally, there is still a positive probability
that we will capture the resulting \inp\ (and therefore \whp\ such
an \inp\ will be generated a linear number of times in $\apg_{t_0}$).
In particular, within distance $t_0$ of a short cycle the distribution of an \inp\ 
may be significantly different from $\bc{\Matrdist{\vdistf{t_0}\bc{\initialdistribution}}{i,j},\Matrdist{\initialdistribution}{j,i}}$.
However, we next show that there are unlikely to be many edges this close to a short cycle.

\begin{claim}\label{claim:nearshortcycle}
	Let $W_0$ be the set of vertices which lie on some cycle of length at most $t_0$ in $\wpg $,
	and recursively define $W_t := W_{t-1} \cup \partial W_{t-1}$ for $t \in \NN$.
	
	Then \whp\ $\abs{W_{t_0}} = O\bc{\frac{n}{\omega_0}}$.
\end{claim}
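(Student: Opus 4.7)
The plan is to bypass the explicit recursive expansion from $W_0$ to $W_{t_0}$ and instead characterise membership in $W_{t_0}$ via a local property of the ambient ball, which can then be bounded directly using~\ref{assump:LocS}. The key observation is that every $v \in W_{t_0}$ has a locally non-tree structure inside $B_\wpg(v, 2t_0)$. Indeed, if $v \in W_{t_0}$, then by unrolling the recursion $W_t = W_{t-1} \cup \partial W_{t-1}$, there exists $u \in W_0$ with $d_\wpg(v,u) \le t_0$. By definition of $W_0$, $u$ lies on a cycle $C$ of length $\ell \le t_0$, so every vertex of $C$ is within distance $\lfloor \ell/2 \rfloor \le t_0/2$ of $u$, and hence within distance $t_0 + t_0/2 \le 2t_0$ of $v$. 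Therefore $C \subseteq B_\wpg(v, 2t_0)$, and in particular this ball is not a tree.

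Next I would apply~\ref{assump:LocS} with the constant choice $r = 2t_0$: whp, for each $i \in \brk{k}$,
$$\dTV\bc{\empNBdist_{i, 2t_0}^{\wpg}, \cT_i^{2t_0}} \le \frac{1}{\omega_0}.$$
Since $\cT_i^{2t_0}$ is by construction a truncated multi-type Galton-Watson tree and hence always acyclic, the event ``the rooted graph is not a tree'' (which is clearly isomorphism-invariant and therefore a well-defined event on $\SetErootedG_\star$) has probability zero under $\cT_i^{2t_0}$. Total variation distance then bounds its probability under $\empNBdist_{i, 2t_0}^{\wpg}$ by $1/\omega_0$.

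Translating back to a vertex count, the number of vertices $v \in V_i\bc{\wpg}$ for which $B_\wpg(v, 2t_0)$ fails to be a tree is at most $n_i/\omega_0$ whp. Combining this with the first step gives
$$\abs{W_{t_0}} \le \sum_{i \in \brk{k}} \abs{\cbc{v \in V_i : B_\wpg(v, 2t_0) \text{ is not a tree}}} \le \sum_{i \in \brk{k}} \frac{n_i}{\omega_0}.$$
A union bound over the finitely many types $i \in \brk{k}$ keeps this a \whp\ statement, and invoking Claim~\ref{claim:classsizes} together with~\ref{PP:NConcentration} to conclude that $n_i = \Theta(n)$ whp yields $\abs{W_{t_0}} = O(n/\omega_0)$ as required.

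I do not anticipate any serious obstacle here: the argument is essentially a one-line appeal to~\ref{assump:LocS} once the geometric reduction $v \in W_{t_0} \Rightarrow B_\wpg(v, 2t_0) \text{ not a tree}$ is in place. The only mild subtlety is to avoid the tempting but wasteful route of bounding $\abs{W_0}$ first and then multiplying by $\Delta_0^{t_0}$, which would not give an $O(n/\omega_0)$ bound since $\Delta_0^{t_0}$ need not be $o(\omega_0)$; looking at balls of radius $2t_0$ around $v$ directly sidesteps this loss entirely.
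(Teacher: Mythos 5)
Your argument is correct and follows essentially the same route as the paper's proof: reduce membership in $W_{t_0}$ to the event that $B_\wpg(v,2t_0)$ contains a cycle, note that this event has probability zero under $\cT_i^{2t_0}$, and invoke~\ref{assump:LocS} to bound its probability under $\empNBdist_{i,2t_0}^\wpg$ by $1/\omega_0$, summing over types. You merely fill in the geometric step ($v \in W_{t_0}$ implies the offending cycle lies entirely within distance $2t_0$ of $v$) that the paper treats as immediate, and your closing remark about why one should not naively bound $\abs{W_0}$ and multiply by $\Delta_0^{t_0}$ is a sound observation, since~\ref{PP:omegadeltanpower} guarantees only $\Delta_0^2 \ll \omega_0$.
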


\begin{proof}
Any vertex which lies in $W_{t_0}$ certainly has the property
that its neighbourhood to depth $2t_0$ contains a cycle.
However, since for any $i \in [k]$, the branching process
$\mathcal{T}_i^{2t_0}$ certainly does \emph{not} contain a cycle,
Assumption~\ref{assump:LocS} (together with the fact that \whp\ there
are $O\bc{n}$ vertices in total due to~\ref{PP:NConcentration})
shows that \whp\ at most $O\bc{n/\omega_0}$ vertices have such a cycle
in their depth $2t_0$ neighbourhoods. 
\end{proof}

\subsection{The deleted half-edges}

In the construction of $\apg$ we deleted some half-edges which remained unmatched in Step~\ref{def:apgMatch},
and it is vital to know that there are not very many such half-edges.
We therefore define $E_0$ to be the set of half-edges which are deleted in Step~\ref{def:apgMatch} of the random construction of $\apg$.

\begin{definition}
	Given integers $d,t \in \NN_0$, a messaged graph $G \in \histgraphs{n}{t_0}$ and a multiset $A \in \ms{\alphabet^{t+2}}{d}$,
	define $n_A= n_A\bc{G}$ to be the number of vertices of $G$ which receive \incompilation\ $A$.

	Further, let $\gamma_A^i=\gamma_A^i\bc{t}$ denote the
	probability that the \incompilationt{t}\ at a vertex of type $i$
	when generating $\apg_t$ is~$A$.
\end{definition}

Observe that for any $d,t \in \NN_0$, the expression $\sum_{A \in \ms{\alphabet^{t+2}}{d}} n_A \bc{G}$ is simply the number of vertices of degree $d$,
and therefore
for any $t\in \NN_0$ we have $\sum_{d \in \NN_0} \sum_{A \in \ms{\alphabet^{t+2}}{d}} n_A\bc{G}=\abs{V\bc{G}}$.

Recall that in Proposition~\ref{prop:Assume1}, apart from the function $F$ and the parameter $\omega_0$,
we also fixed parameters $c_0,d_0$, which we will now make use of. 
\begin{proposition}\label{prop:unmatched}
\Whp\ $\abs{E_0} = o\bc{\frac{n}{\sqrt{c_0}}}$.
\end{proposition}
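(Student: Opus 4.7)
The plan is to decompose $|E_0|$ by reason for deletion: half-edges left unmatched because no compatible partner remains (consistency), and half-edges discarded to preserve simplicity (loops and multi-edges). Since each half-edge with \story\ $\bc{\voone,\votwo}\in\Compat{t_0}$ can only be matched to one with dual \story\ $\bc{\votwo,\voone}$, maximality of the consistency-only matching forces at least $\min\bc{m_{\voone,\votwo},m_{\votwo,\voone}}$ pairs to be matched, so the number of half-edges unmatched for consistency is at most
\begin{align*}
\sum_{\{\voone,\votwo\}\,:\,\voone\neq\votwo} \abs{m_{\voone,\votwo}-m_{\votwo,\voone}} + O\bc{\abs{\alphabet}^{t_0+1}},
\end{align*}
where the $O$-term handles the ``diagonal'' cases $\voone=\votwo$.

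To bound each imbalance, I would use the triangle inequality
\begin{align*}
\abs{m_{\voone,\votwo}-m_{\votwo,\voone}} \le \abs{m_{\voone,\votwo}-\barm_{\voone,\votwo}} + \abs{\barm_{\voone,\votwo}-\barm_{\votwo,\voone}} + \abs{m_{\votwo,\voone}-\barm_{\votwo,\voone}}.
\end{align*}
The middle (deterministic) term is $O\bc{\Delta_0/\omega_0}\cdot\barm_{\voone,\votwo}$ by Claim~\ref{claim:ZijandNi}. Summing over all compatible \story\ pairs and noting $\sum_{\voone,\votwo}\barm_{\voone,\votwo}=O\bc{n}$ (total expected number of half-edges, finite by Remark~\ref{rem:finitemoments} and~\ref{PP:NConcentration}) gives a net contribution $O\bc{n\Delta_0/\omega_0}$.

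For the concentration terms, I would write $m_{\voone,\votwo}=\sum_{v\in V_i}X_v$ with $i=g_1\bc{\voone}$, where $X_v$ counts the half-edges at $v$ carrying that \story. Conditional on the class sizes, the $X_v$ are i.i.d.\ and bounded by the degree of $v$; Remark~\ref{rem:finitemoments} supplies a finite second moment, so $\Var\bc{m_{\voone,\votwo}} = O\bc{n}$. Chebyshev then gives, say, $\abs{m_{\voone,\votwo}-\barm_{\voone,\votwo}}\le n^{0.85}$ with failure probability $o\bc{\abs{\alphabet}^{-2\bc{t_0+1}}}$. A union bound over the $\abs{\alphabet}^{2\bc{t_0+1}}=O\bc{1}$ \story\ pairs, combined with Claim~\ref{claim:classsizes} to absorb the fluctuation of $n_i$ around $\Erw\bc{n_i}$, yields $\sum_{\voone,\votwo}\abs{m_{\voone,\votwo}-\barm_{\voone,\votwo}}=o\bc{n/\sqrt{c_0}}$ w.h.p.

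For the simplicity constraint I would invoke the standard configuration-model estimate: with maximum half-edge degree $\Delta_0$ and $\Theta\bc{n}$ half-edges, the expected number of loops and multi-edges produced by a uniformly random consistency-respecting matching is $O\bc{\Delta_0^2}$, so Markov's inequality makes their number $o\bc{n/\sqrt{c_0}}$ w.h.p.\ (since $\Delta_0^2\ll\omega_0\ll c_0^{-1}\cdot n$ by \ref{PP:omegan} and \ref{PP:choicec0}). Enforcing simplicity loses at most twice this many half-edges beyond the consistency-only matching. Combining, and using that $\Delta_0\sqrt{c_0}=o\bc{\omega_0}$ (from $\Delta_0^2\ll\omega_0$ and $c_0\ll\omega_0$), we get $n\Delta_0/\omega_0=o\bc{n/\sqrt{c_0}}$, giving the claim. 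The main obstacle I expect is the simplicity step: one must argue rigorously that enforcing simplicity on top of a near-maximum consistency matching only loses $O\bc{\Delta_0^2}$ additional half-edges, rather than cascading in some pathological way; this requires either a switching-type local exchange argument or an explicit construction of a simple matching from a non-simple one by deleting only the offending edges.
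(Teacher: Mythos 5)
Your overall strategy matches the paper's: reduce the number of unmatched half-edges to the story-imbalances $\abs{m_{\voone,\votwo}-m_{\votwo,\voone}}$, then bound each imbalance by a triangle inequality through the deterministic centres $\barm$, using Claim~\ref{claim:ZijandNi} for the deterministic discrepancy and a concentration inequality for the random deviations. Two points of comparison are worth noting.

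First, your concentration step is genuinely cleaner than the paper's. The paper detours through the in-compilation counts $n_A$ (restricted to degrees $\le d_0$, with a separate Markov bound for higher degrees), applies a Chernoff bound to each of up to $\abs{\alphabet}^{\bc{t_0+1}d_0}$ multisets~$A$, and then reassembles $m_{\voone,\votwo}$; this forces the unpleasant bookkeeping with $c_0,d_0$ that you avoid. You instead write $m_{\voone,\votwo}$ directly as a sum of i.i.d.\ bounded-second-moment contributions over vertices of $V_{g_1\bc{\voone}}$ (Remark~\ref{rem:finitemoments} giving $\Var = O\bc{n}$), and Chebyshev with the constant union bound over the $\abs{\alphabet}^{2\bc{t_0+1}}$ story pairs gives $n^{0.85} = o\bc{n/\sqrt{c_0}}$ immediately. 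Your approach buys a shorter, more transparent proof; the paper's approach buys the concentration of each individual $n_A$, which it needs anyway for Proposition~\ref{prop:incominghistoryconcentration} and so gets this estimate essentially for free.

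Second, you flag the simplicity constraint in Step~\ref{def:apgMatch} and correctly observe that it is not obviously dominated by the configuration-model estimate. This is a point the paper glosses over entirely: the displayed identity
$$\abs{E_0}=\sum_{\voone \neq \votwo} \tfrac{1}{2} \abs{m_{\voone,\votwo}-m_{\votwo,\voone}} + \sum_{\voone} \vecone\cbc{m_{\voone,\voone} \notin 2\NN}$$
is presented as deterministic, but it is only the correct formula if the maximum matching is taken subject to \emph{consistency alone}; enforcing simplicity can only increase $\abs{E_0}$. So the paper actually proves an upper bound on a quantity that is a priori a lower bound for $\abs{E_0}$, and the missing step is precisely the one you identify: that the deficit incurred by additionally requiring simplicity is $o\bc{n/\sqrt{c_0}}$. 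Your sketch (a uniformly random max consistency-matching has $O\bc{\Delta_0^2}$ conflicts in expectation, so one can delete those half-edges to obtain a simple consistency-matching of nearly the same size, and then compare to the maximum simple one) is the right idea, and you are right that making it rigorous requires care---one must argue about the conflict count in a near-maximum matching restricted to compatible story types, where each story class has linear size by Claim~\ref{claim:manyhistories}. Since the paper's own proof does not address this, your explicit acknowledgment of the obstacle is a strength rather than a gap.

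In summary: your proposal is correct in spirit and in fact somewhat more careful than the paper's argument; the only genuinely incomplete step (the simplicity estimate) is one you explicitly flag, and the paper's proof is incomplete on exactly the same point.
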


\begin{proof}
Let us fix two \instoriest{t_0} $\vec \mu_1,\vec \mu_2 \in \alphabet^{t_0+1}$
and consider the number of half-edges $m_{\vec \mu_1,\vec \mu_2}$ with \instoryt{t_0} $\vec \mu_1$
and \outstoryt{t_0} $\vec \mu_2$.
We aim to show that $m_{\vec \mu_1,\vec \mu_2}$ is concentrated around its expectation
$\overline{m}_{\vec \mu_1,\vec \mu_2}$ as defined in~\eqref{eq:mbardef}.
Recall that the multiset of \storiest{t_0} at a vertex is determined by the
\incompilationt{t_0}, i.e.\ the multiset of \inpst{t_0}.
For each $d_1,d_2 \in \NN$, let $B_{d_1,d_2} = B_{d_1,d_2} \bc{\vec \mu_1,\vec \mu_2}$ denote the set of
\incompilationst{t_0} $A \in \ms{\alphabet^{t_0+2}}{d_2}$ consisting of $d_2$ many \inpst{t_0} which
lead to $d_1$ half-edges with \storyt{t_0} $\bc{\vec \mu_1,\vec \mu_2}$, and let $x_A$ denote
the number of vertices which receive \incompilationt{t_0} $A$ in Step~\ref{def:apgInstory} of the construction of $\apg_{t_0}$
(in particular \emph{before} the deletion of half-edges).
Then we have
\begin{align*}
m_{\vec \mu_1,\vec \mu_2} & = \sum_{d_1,d_2 \in \NN} \sum_{A \in B_{d_1,d_2}}d_1 x_A
\end{align*}
We split the sum into two cases, depending on $d_2$. Consider first the case when $d_2>d_0$.
By~\ref{PP:NConcentration} \whp\ the total number of vertices is $\Theta\bc{n}$,
and by~\ref{PP:FandZ} the probability that any vertex has degree larger than $d_0$ is at most
$1/F\bc{d_0}$, 
and it follows that \whp\ the number of half-edges attached to vertices of degree larger than $d_2$
is dominated by $d_2 \cdot \Bin\bc{\Theta\bc{n},\frac{1}{F\bc{d_2}}}$.
Thus the expected number of half-edges attached to such high degree vertices is at most
$$
\Theta\bc{1}\sum_{d_2 \ge d_0} \frac{d_2n}{F \bc{d_2}} = \Theta \bc{1} \frac{d_0 n}{F\bc{d_0}},
$$
Now by~\eqref{PP:choicec0} we have $F\bc{d_0}\gg c_0$ and also $d_0 \le \sqrt{\exp\bc{d_0}} \ll \sqrt{c_0}$, 
and therefore 
 $\frac{d_0 n}{F\bc{d_0}} = o\bc{\frac{n}{\sqrt{c_0}}}$.
An application of Markov's inequality shows that \whp\ the number of half-edges attached to vertices of degree at least $d_0$ is $o\bc{\frac{n}{\sqrt{c_0}}}$.

We now turn our attention to the case $d_2 \le d_0$. Here we observe that for any $A$ each vertex of $V_i$ is given
\incompilationt{t_0} $A$ with probability $\gamma_A^i $ independently, and so the number of vertices which receive $A$
is distributed as
$$
X:=\sum_{i=1}^k X_i = \sum_{i=1}^k \Bin\bc{n_i,\gamma_A^i}.
$$

Conditioning on the high probability event that $n_i = \bc{1+o\bc{\frac{1}{\omega_0}}}\Erw\bc{n_i}$
(see Claim~\ref{claim:classsizes}),
and in particular is $\Theta\bc{n}$,
a standard Chernoff bound shows that with probability at least $1- \exp\bc{ - \Theta\bc{ \bc{\ln n}^2} }$ the random variable $X$
is within an additive factor $\sqrt{n}\ln n$
of its expectation, 
and a union bound over all at most $\abs{\alphabet}^{ \bc{t_0+1}d_0} \stackrel{\text{\tiny \eqref{eq:secondchoicec0}}}{=} o\bc{c_0} \ll n^{1/5}$
choices for~$A$ of size at most~$d_0$
shows that \whp\ this holds for all such~$A$ simultaneously.

It follows that \whp\ 
\begin{align} \label{eq:concentrationmmu}
	\abs{m_{\vec \mu_1,\vec \mu_2} - \overline{m}_{\vec \mu_1,\vec \mu_2}}
	& \le \abs{m_{\vec \mu_1,\vec \mu_2} - \Erw\bc{\degdist_{g\bc{\vec \mu_1}}}q_{\vec \mu_1,\vec \mu_2} n_{g_1\bc{\vec \mu_1}}} + \abs{\Erw\bc{\degdist_{g\bc{\vec \mu_1}}}q_{\vec \mu_1,\vec \mu_2} n_{g_1\bc{\vec \mu_1}} - \overline{m}_{\vec \mu_1,\vec \mu_2}} \nonumber\\
	& \le \abs{\alphabet}^{ \bc{t_0+1}d_0}\sqrt{n}\ln n + o\bc{\frac{n}{\sqrt{c_0}}} + o\bc{\frac{n}{\omega_0}}
	= o\bc{\frac{n}{\sqrt{c_0}}},
\end{align}
To see the last estimate, note that
by~\eqref{eq:secondchoicec0} we have  $\abs{\alphabet}^{\bc{t_0+1}d_0}\sqrt{n}\ln n \ll c_0\sqrt{n}\ln n = o\bc{n/\sqrt{c_0}}$,
where second estimate follows since $c_0 \ll \omega_0 \ll n^{1/5}$ by~\ref{PP:omegadeltanpower} and~\ref{PP:choicec0}.
This last fact also implies that $\sqrt{c_0} \ll c_0 \ll \omega_0$.

Since this is true for any arbitrary \storiest{t_0} $\vec \mu_1,\vec \mu_2$, we can deduce that \whp\ 
$$
\abs{m_{\vec \mu_1,\vec \mu_2}-m_{\vec \mu_2,\vec \mu_1}}
= \abs{\overline{m}_{\vec \mu_1,\vec \mu_2}-\overline{m}_{\vec \mu_2,\vec \mu_1}} + o\bc{\frac{n}{\sqrt{c_0}}}.
$$
Moreover, by Claim~\ref{claim:ZijandNi} we have
$\abs{\barm_{\voone, \votwo} - \barm_{\votwo, \voone}} = O\bc{\frac{n\Delta_0}{\omega_0}}
\stackrel{\text{\tiny \ref{PP:choicec0}}}{=} o\bc{\frac{n}{\sqrt{c_0}}}$.
	Hence 
$
\abs{m_{\vec \mu_1,\vec \mu_2}-m_{\vec \mu_2,\vec \mu_1}}
= o\bc{\frac{n}{\sqrt{c_0}}},
$
and a union bound over all of the at most $\abs{\alphabet}^{2 \bc{t_0+1}} = O\bc{1}$ choices for $\vec \mu_1,\vec \mu_2$
implies that \whp\ the same is true for \emph{all} choices of $\vec \mu_1,\vec \mu_2$ simultaneously.

Finally, we observe that (deterministically) the number $\abs{E_0}$ of half-edges left unmatched is
$$
\abs{E_0}=\sum_{\vec \mu_1 \neq \vec \mu_2} \frac{1}{2} \abs{m_{\vec \mu_1,\vec \mu_2}-m_{\vec \mu_2,\vec \mu_1}}
+ \sum_{\vec \mu_1} \vecone\cbc{m_{\vec \mu_1,\vec \mu_1} \notin 2\NN}.
$$
The first term is $o\bc{\frac{n}{\sqrt{c_0}}}$ \whp\ by the arguments above, while the second term is deterministically at most
the number of $\vec \mu_1$ over which the sum ranges, which is at most $\abs{\alphabet}^{t_0+1}=O\bc{1}$.
Therefore \whp\ $\abs{E_0}=o\bc{\frac{n}{\sqrt{c_0}}}$, as required.\end{proof}

\subsection{Similar \incompilations}
Our next goal is to show that the \incompseq\ distribution in $\wpg_{t_0}$
is essentially the same as that in $\apg_{t_0}$.

\begin{proposition}\label{prop:incominghistoryconcentration}
		Let $t_0$ be some (bounded) integer. Then \whp\ the following holds.
		\begin{enumerate}
			\item
			For every integer $d \le d_0$ and for every $A \in \ms{\alphabet^{t_0+2}}{d}$ we have 
			$
			n_A \bc{\wpg_{t_0}}, n_A\bc{\apg_{t_0}} = \bc{\sum_{i \in [k]} \gamma_A^i n_i} +o\bc{\frac{n}{\sqrt{c_0}}}.
			$
			\item
			$\apg_{t_0},\wpg_{t_0}$ each contains at most $\frac{n}{c_0}$ vertices of degree at least $d_0$. 		\end{enumerate}
\end{proposition}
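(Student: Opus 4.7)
I would treat the two parts separately, and within each part the two models separately. Part~2 is easier and I tackle it first: for $\apg_{t_0}$, a vertex of type $i$ has at most $\|\degdist_i\|_1$ half-edges, so by~\ref{PP:FandZ} the probability that it has degree at least $d_0$ is at most $1/F(d_0)$; linearity of expectation gives $O(n/F(d_0)) = o(n/c_0)$ high-degree vertices in expectation, and Markov concludes. For $\wpg_{t_0}$, the empirical degree distribution on $V_i$ matches $\|\degdist_i\|_1$ to within $1/\omega_0$ in TV distance by~\ref{assump:LocS}, and since $\omega_0 \gg c_0$ (by~\ref{PP:choicec0}), the same $o(n/c_0) \le n/c_0$ bound follows.

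For Part~1 on $\apg_{t_0}$, the argument mirrors the one appearing in the proof of Proposition~\ref{prop:unmatched}. By construction each $v \in V_i$ independently receives \incompilation\ $A$ with probability $\gamma_A^i$, so conditional on the high-probability event $n_i = \Theta(n)$ (from~\ref{PP:NConcentration} and Claim~\ref{claim:classsizes}), $n_A(\apg_{t_0})$ is a sum of independent Bernoullis with mean $\sum_i n_i \gamma_A^i$. A Chernoff estimate yields a deviation of $\sqrt{n}\log n$ with failure probability $\exp(-\Theta(\log^2 n))$, and a union bound over the at most $|\alphabet|^{(t_0+2)d_0} = o(c_0)$ multisets of size at most $d_0$ (by~\eqref{eq:secondchoicec0}) completes the argument, using $c_0 \sqrt{n}\log n = o(n/\sqrt{c_0})$ which follows from $c_0 \ll n^{1/5}$.

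For Part~1 on $\wpg_{t_0}$, which is the core, I would first compute $\Erw(n_A \mid \wpg)$ and then concentrate around it. By Claim~\ref{claim:nearshortcycle}, vertices in $W_{t_0+1}$ (those close to a short cycle) contribute $O(n/\omega_0) = o(n/\sqrt{c_0})$ and can be discarded. For $v \notin W_{t_0+1}$ the ball $\ball_\wpg(v,t_0+1)$ is a tree with isomorphism class $H$, and the probability (over the random initialisation) that $v$ receives \incompilation\ $A$ is a function $\gamma_A^{i,H}$ of $H$ alone. Crucially, the construction of $\apg_{t_0}$ together with Claim~\ref{claim:localhistorydistribution} gives the identity $\gamma_A^i = \sum_H p_H^i \gamma_A^{i,H}$, where $p_H^i := \Pr(\cT_i^{t_0+1} \cong H)$. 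Combined with~\ref{assump:LocS}---the empirical neighbourhood distribution is within $1/\omega_0$ of $\cT_i^{t_0+1}$ in TV distance---this yields $\Erw(n_A \mid \wpg) = \sum_i n_i \gamma_A^i + O(n/\omega_0)$.

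The main obstacle is concentration of $n_A(\wpg_{t_0})$ around $\Erw(n_A \mid \wpg)$. Given $\wpg$, each indicator $X_v = \vecone\{v \text{ receives } A\}$ depends only on initial messages in $\ball_\wpg(v,t_0+1)$, so $X_v, X_{v'}$ are independent when the corresponding balls are disjoint. A variance computation gives $\Var(n_A \mid \wpg) = O(n \Delta_0^{2(t_0+1)})$ by counting close vertex pairs, which suffices via Chebyshev when $\Delta_0$ is not too close to its maximum permitted value in~\eqref{eq:choiceDelta0}. In general one needs a refinement exploiting that only vertices of degree at most $d_0$ can contribute (by Part~2), allowing ball sizes to be controlled by $d_0$ rather than $\Delta_0$ in the critical places; the calibrated parameters from Proposition~\ref{prop:Assume1}, particularly~\ref{PP:choiced0},~\ref{PP:choicec0} and~\eqref{eq:secondchoicec0}, are set up precisely for this. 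A final union bound over the $O(c_0)$ relevant multisets $A$ concludes the proof.
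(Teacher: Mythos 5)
Your decomposition follows the paper's own (sketched) proof very closely: Part~1 for $\apg_{t_0}$ via the binomial/Chernoff argument already in Proposition~\ref{prop:unmatched}, Part~2 via the tail bound~\ref{PP:FandZ}, and for $\wpg_{t_0}$ the combination of Claim~\ref{claim:nearshortcycle}, the tree-local identity $\gamma_A^i = \sum_H p_H^i \gamma_A^{i,H}$, and~\ref{assump:LocS} to hit the expectation $\sum_i n_i\gamma_A^i + O(n/\omega_0)$. Two small remarks: you should note that $n_A(\apg_{t_0})$ is counted \emph{after} the half-edge deletions in Step~\ref{def:apgMatch}, so you also need Proposition~\ref{prop:unmatched}'s bound $|E_0| = o(n/\sqrt{c_0})$ to see that deletions perturb at most $o(n/\sqrt{c_0})$ \incompilations; and the relevant truncation radius is $t_0+1$ rather than $t_0$, as you say (the paper's sketch says $t_0$, which is an off-by-one).

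The one place you have a genuine gap is the concentration step for $n_A(\wpg_{t_0})$, which you correctly identify as the main obstacle but do not close. Your proposed fix --- ``only vertices of degree at most $d_0$ can contribute, allowing ball sizes to be controlled by $d_0$ rather than $\Delta_0$'' --- does not work as stated: the degree restriction from Part~2 applies only to the vertex $v$ itself, not to the \emph{interior} vertices of $\ball_{\wpg}(v, 2t_0+3)$, whose size governs the covariance. Even with $\deg(v) \le d_0$, the ball can contain vertices of degree up to $\Delta_0$, so the variance bound remains $O\bc{n\, d_0\, \Delta_0^{2t_0+2}}$, which does not beat $(n/\sqrt{c_0})^2$ when $\Delta_0$ is close to its permitted ceiling $n^{1/10}$ and $t_0$ is moderately large. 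What actually closes the gap is a bucketing argument on \emph{ball sizes}: fix a truncation threshold $K=K(n)$, call $v$ exceptional if $|\ball_{\wpg}(v, 2t_0+3)| > K$, and bound the number of exceptional vertices by $n\bc{\Pr\bc{|\cT_i^{2t_0+3}| > K} + O(1/\omega_0)}$ via~\ref{assump:LocS} together with Remark~\ref{rem:finitemoments}; the non-exceptional vertices admit a proper $K$-colouring in which each colour class consists of vertices with pairwise disjoint balls, so the $X_v$ within a class are genuinely independent and a Chernoff bound applies class by class. Choosing $K$ to grow slowly enough that the exceptional contribution is $o(n/\sqrt{c_0})$ while $K\sqrt{n}\log n = o(n/\sqrt{c_0})$ then gives the claimed concentration for all $\Delta_0$ within the range permitted by~\eqref{eq:choiceDelta0}.
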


\begin{proof}
The proof is technical, but ultimately standard and we give only a short overview.
The proofs of the two statements for $\apg_{t_0}$ essentially already appear
in the proof of Proposition~\ref{prop:unmatched}, which estimated the same parameters
in the random model \emph{before} half-edges were deleted.
We therefore only need to
additionally take account of
the fact that some half-edges were deleted, but Proposition~\ref{prop:unmatched} itself
implies that this will not affect things too much. 

To prove the first statement for $\wpg_{t_0}$ we apply~\ref{assump:LocS}.
More precisely, the sets of local neighbourhoods up to depth $t_0$ in $\wpg$
of all vertices of $V_i$ look similar to $n_i$ independent copies of $\cT_i^{t_0}\bc{\degdist}$.
Furthermore, since the message initialisation in $\wpg$ is according to $\initialdistribution$,
and since there are very few dependencies between the local neighbourhoods, the same
is true if we consider the \emph{messaged} local neighbourhoods at time $0$.
Since these messaged neighbourhoods determine the corresponding \inpt{t_0} at the root,
a Chernoff bound shows that \whp\ we have concentration of $n_A\bc{\wpg_{t_0}}$
around its expectation. Importantly the $1/\omega_0$ term that describes the speed of convergence
of the local structure to $\cT_i^{t_0}$ is smaller than $1/\sqrt{c_0}$, the (normalised) error term in the statement.

For the second statement, we also apply~\ref{assump:LocS},
although here we only need to go to depth $1$ and need not consider any messages.
We also use~\ref{assump:Delta}
to bound the number of half-edges attached to vertices at which $\wpg$ and the
copies of $\mathcal{T}_i^1$ disagree.
Otherwise the proof is similar.
\end{proof}

Let $a_0:= \frac{\sqrt{c_0}}{4d_0\abs{ \alphabet}^{\bc{t_0+2}d_0}}$.
As a corollary of Proposition~\ref{prop:incominghistoryconcentration}, we obtain the following result.

\begin{corollary}\label{cor:similarhistories}
	After re-ordering vertices if necessary, \whp\ the number of vertices
	whose \incompilations\ are different in $\apg_{t_0}$ and $\wpg_{t_0}$
	is at most $\frac{n}{a_0}$.
\end{corollary}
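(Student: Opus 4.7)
The plan is to deduce the corollary essentially as a pigeonhole consequence of Proposition~\ref{prop:incominghistoryconcentration}, together with a union bound over the possible small \incompilations. Concretely, I would first condition on the high-probability event that both conclusions of Proposition~\ref{prop:incominghistoryconcentration} hold for $\apg_{t_0}$ and $\wpg_{t_0}$ simultaneously, and that the class sizes $n_i$ of the two models differ by at most $o(n^{4/5})$ (this follows from Claim~\ref{claim:classsizes}, since $\vprobvec$ determines the same distribution in both constructions; alternatively one may couple the class sizes to be equal). The error $o(n^{4/5})$ is negligible compared with $n/a_0$, since $a_0 \le \sqrt{c_0} \ll n^{1/10}$ by~\ref{PP:omegadeltanpower} and~\ref{PP:choicec0}, so $n/a_0 \gg n^{9/10}$.

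Having fixed this event, I would construct the re-ordering class by class. Within each vertex class~$V_i$, and for each fixed multiset $A \in \ms{\alphabet^{t_0+2}}{d}$ with $d \le d_0$, pair arbitrarily vertices of $\wpg_{t_0}$ and $\apg_{t_0}$ in $V_i$ which both receive \incompilation~$A$. By the first conclusion of Proposition~\ref{prop:incominghistoryconcentration}, the two counts $n_A(\wpg_{t_0})$ and $n_A(\apg_{t_0})$ differ by $o(n/\sqrt{c_0})$, uniformly over $A$; so the number of unmatched ``small-\incompilation'' vertices contributed by this fixed $A$ is $o(n/\sqrt{c_0})$. Vertices of degree exceeding $d_0$ in either model, which are precisely those receiving an \incompilation\ of size greater than~$d_0$, are simply left unmatched; the second conclusion of Proposition~\ref{prop:incominghistoryconcentration} bounds their total number by $2n/c_0$.

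Summing the two contributions, the total number of vertices whose \incompilations\ fail to agree after the pairing is at most
\begin{align*}
\sum_{d=0}^{d_0} \abs{\ms{\alphabet^{t_0+2}}{d}} \cdot o\bc{\frac{n}{\sqrt{c_0}}} + \frac{2n}{c_0}
\le (d_0+1) \abs{\alphabet}^{(t_0+2)d_0} \cdot o\bc{\frac{n}{\sqrt{c_0}}} + \frac{2n}{c_0},
\end{align*}
using the crude bound $\abs{\ms{\alphabet^{t_0+2}}{d}} \le \abs{\alphabet}^{(t_0+2)d}$. Plugging in $a_0 = \sqrt{c_0}/(4d_0 \abs{\alphabet}^{(t_0+2)d_0})$, the first term is $o(n/a_0)$, while for the second term one just checks that $2/c_0 \le 1/a_0$ for large~$n$, which reduces to $2/\sqrt{c_0} \le 4d_0 \abs{\alphabet}^{(t_0+2)d_0}$ and is trivial since $c_0 \to \infty$ by~\ref{PP:choicec0}.

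The only genuine subtlety I expect is the bookkeeping of the ``leftover'' vertices when the class sizes of $\apg_{t_0}$ and $\wpg_{t_0}$ do not exactly coincide; these $o(n^{4/5})$ vertices can simply be absorbed into the mismatch count, as $n^{4/5} \ll n/a_0$. Everything else is a union bound of at most $(d_0+1)\abs{\alphabet}^{(t_0+2)d_0}$ estimates, each of which is already supplied by Proposition~\ref{prop:incominghistoryconcentration}, so no new probabilistic input is needed.
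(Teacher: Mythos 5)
Your proposal is correct and takes essentially the same route as the paper: condition on the high-probability event of Proposition~\ref{prop:incominghistoryconcentration}, sum the per-\incompilation\ discrepancy bounds over all $A$ with $\abs{A}\le d_0$ (using $\abs{\ms{\alphabet^{t_0+2}}{d}}\le\abs{\alphabet}^{(t_0+2)d}$), add the $2n/c_0$ high-degree contribution, and identify the resulting expression with $n/a_0$ via the definition of $a_0$. The additional bookkeeping about class sizes differing by at most $O(n^{4/5})$ is a harmless extra precaution the paper leaves implicit; the only slip is that for the second term you should note $2n/c_0 = o(n/a_0)$ (which holds since $a_0\le\sqrt{c_0}\ll c_0$), not merely $2/c_0\le 1/a_0$ — otherwise the sum of the two terms is only bounded by $(1+o(1))n/a_0$ rather than $n/a_0$.
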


\begin{proof}
Assuming the high probability event of Proposition~\ref{prop:incominghistoryconcentration} holds,
	the number of vertices with differing \incompilations\ is at most
\begin{align*}
\bc{\sum_{d=0}^{d_0}\sum_{A \in \ms{\alphabet^{t_0+2}}{d}} \frac{2n}{\sqrt{c_0}}} + \frac{2n}{c_0}
& \le \frac{2n}{ \sqrt{c_0}} \bc{\sum_{d=0}^{d_0} \abs{\alphabet}^{\bc{t_0+2}d}} + \frac{2n}{c_0}\\
& \le \frac{2n}{\sqrt{c_0}}  d_0\abs{\alphabet}^{\bc{t_0+2}d_0} + \frac{2n}{c_0} 
= \frac{2 n}{ 4a_0} + \frac{2n}{c_0}
\le \frac{n}{a_0},
\end{align*}
where the last approximation follows by definition of $a_0$.
\end{proof}

\subsection{Matching up}

Next, we show that choosing the random matching as we did in
Step~\ref{def:apgMatch} of
the construction of $\apg_{t_0}$
is an appropriate choice.
We already defined the type-degree sequence of a graph, which generalises the degree sequence,
but we need to generalise this notion still further to also track the in-coming \stories\ at a vertex.

\begin{definition}
	For any $\alphabet^{t_0+1}$-messaged graph $G\in \histgraphs{n}{t_0}$,
	let $H_i= H_i\bc{G}$ denote the \incompilation\ at vertex $i$, for $i \in [n]$
	and let $\vec H \bc{G}:= \bc{H_1,\ldots,H_n}$ be the \incompseq.
\end{definition}

\begin{claim}\label{claim:samehistseqsameprob}
	Suppose that $G_1,G_2$ are two graphs on $[n]$ with $\vec H\bc{G_1}=\vec H\bc{G_2}$.
	Then
	$
	\Pr\bc{\wpg=G_1} =\bc{1+o \bc{1}} \Pr\bc{\wpg=G_2}.
	$
\end{claim}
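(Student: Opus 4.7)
The plan is to read the statement as comparing $\Pr(\wpg_{t_0}=G_1)$ and $\Pr(\wpg_{t_0}=G_2)$ (since the $G_i$ must carry messages in order for $\vec H(G_i)$ to be defined), and to split each such probability into (i) the probability that the underlying unmessaged graph equals the underlying graph $G_i^u$ of $G_i$, and (ii) the conditional probability that the random initialisation agrees with the $0$-messages of $G_i$. I will then check that (i) agrees up to $(1+o(1))$ via Assumption~\ref{assump:Eqlikely} and that (ii) agrees \emph{exactly}.

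For step (i), I would observe that the incompilation at a vertex $v$ already determines the degree of $v$ and the multiset of types of its neighbours: each input consists of an instory and a $0$-outmessage, and the source/target types of either can be read off via the map $g$ of Definition~\ref{def:embeddedmsgtype}. Hence $\vec H(G_1) = \vec H(G_2)$ forces $\vD(G_1^u) = \vD(G_2^u)$, and Assumption~\ref{assump:Eqlikely} yields $\Pr(\wpg = G_1^u) = (1+o(1))\Pr(\wpg = G_2^u)$.

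For step (ii), since the WP update rule is deterministic, the messaged graph $G_i$ is recovered from $G_i^u$ together with the assignment $\pi_i$ of $0$-messages to directed edges; moreover $\pi_i$ is initialised independently across directed edges according to distributions depending only on source and target types. Consequently
\[
\Pr\bc{\wpg_{t_0} = G_i} = \Pr\bc{\wpg = G_i^u} \cdot \prod_{v}\prod_{u \in N(v)} \Pr\bc{\Matrdist{\initialdistribution}{\text{type}(v),\text{type}(u)} = \pi_i(v\to u)}.
\]
For fixed $v$, the inner product over $u \in N(v)$ depends only on the multiset $\mset{\pi_i(v\to u) : u\in N(v)}$ of $0$-outmessages from $v$, since the target type of each such message is encoded in the message itself (via $g_2$). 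But this multiset is precisely the multiset of second coordinates of inputs comprising $H_v(G_i)$, and so it agrees for $G_1$ and $G_2$. Multiplying over $v$ shows that the two initialisation factors are in fact \emph{exactly} equal, and combining with step (i) yields the claim.

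There is no serious obstacle; the proof is essentially bookkeeping. The one observation that makes it work is that although $G_1$ and $G_2$ may differ in \emph{which} neighbour of $v$ receives \emph{which} $0$-message from $v$, the joint probability of the initialisation factorises over directed edges into a form that depends only on the multiset of $0$-outmessages at each vertex, which is exactly what the incompilation records.
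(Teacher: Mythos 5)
Your proof is correct, and for the part the paper explicitly addresses your argument coincides with the paper's: you observe that the \incompilation\ at each vertex encodes (via $g$) the type of the vertex and the multiset of types of its neighbours, so $\vec H(G_1)=\vec H(G_2)$ forces $\vD(G_1)=\vD(G_2)$, and then Assumption~\ref{assump:Eqlikely} gives the asymptotic equality of the underlying-graph probabilities. That is, word for word, the entire content of the paper's two-sentence proof.

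Where you go further is step~(ii). The paper's proof says nothing about the randomness in the initialisation, and indeed applies Assumption~\ref{assump:Eqlikely} directly to the messaged graphs $G_1,G_2$, even though that assumption is stated for unmessaged $k$-type graphs and $\Pr(\wpg = G_i)$ is, strictly read, comparing the unmessaged random graph $\wpg$ against a messaged object. Under the natural reading of the claim --- that it compares $\Pr(\wpg_{t_0}=G_1)$ and $\Pr(\wpg_{t_0}=G_2)$, which is the form needed to deduce that $\wpg_{t_0}$ is close to uniform conditioned on the \incompseq\ --- one does have to check that the initialisation factors agree, and your factorisation over directed edges and the observation that the per-vertex factor depends only on the multiset of $0$-\outmessages\ (because $g$ is encoded in the message) does exactly this, with exact rather than merely asymptotic equality. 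So your proposal is not a different route but a more careful and complete write-up of the same route: it supplies the step the paper leaves implicit, and it is arguably necessary given how the claim is used in Section~\ref{sec:contiguity}.
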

\begin{proof}
If $\vec H\bc{G_1}=\vec H\bc{G_2}$, then in particular $\vD\bc{G_1}=\vD\bc{G_2}$.  Then by Assumption~\ref{assump:Eqlikely}, we have that $\Pr\bc{\wpg=G_1}=\bc{1+o\bc{1}}\Pr\bc{\wpg=G_2}$. 
\end{proof}

\subsection{Message consistency}

We also need to know that the message histories generated in the construction of $\apg_{t_0}$
match those that would be produced by Warning Propagation.
Let $\apg_{\WP}$ denote the graph with message histories generated by constructing $\apg_{t_0}$,
stripping all the message histories except for the messages at time $0$ and running Warning Propagation
for $t_0$ steps with this initialisation. Furthermore, let $X_0$ be the set of vertices
at which some half-edges were deleted in Step~\ref{def:apgMatch} of the construction of $\apg_{t_0}$,
and for $t\in \NN$ let $X_t$ be the set of vertices at distance at most $t$ from $X_0$ in $\apg_{t_0}$.

\begin{proposition}\label{prop:WPconsistent}
	Deterministically we have $\apg_{\WP} = \apg_{t_0}$ except on those edges incident to $X_{t_0}$.
	Furthermore, on those edges incident to $X_{t_0}$ but not $X_{t_0-1}$, the message histories
	in $\apg_{\WP}$ and $\apg_{t_0}$ are identical up to time $t_0-1$.
\end{proposition}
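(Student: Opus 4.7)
The plan is to prove the stronger deterministic claim by induction on $t \ge 0$: for every vertex $v \notin X_t$ and every half-edge $h$ at $v$, the out-messages along $h$ at times $0, 1, \ldots, t$ agree in $\apg_{\WP}$ and $\apg_{t_0}$. The base case $t = 0$ is immediate since $\apg_{\WP}$ is obtained by stripping all messages of $\apg_{t_0}$ except those at time $0$, so the time-$0$ out-messages on every half-edge coincide by definition.

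For the inductive step, suppose the claim holds up to $t$ and let $v \notin X_{t+1}$. In particular $v \notin X_0$, so every half-edge at $v$ was matched in Step~\ref{def:apgMatch}; moreover every neighbor $u$ of $v$ in the underlying graph of $\apg_{t_0}$ lies at distance at least $t+1$ from $X_0$, hence satisfies $u \notin X_t$. The consistency condition in Step~\ref{def:apgMatch} forces the in-story at $v$'s half-edge to equal the out-story at the partner half-edge at $u$; in particular, the in-message $\mu_{u \to v}\bc{t}$ used in Step~\ref{def:apgInstory} to compute $\mu_{v \to \cdot}\bc{t+1}$ coincides with $u$'s time-$t$ out-message along that edge in $\apg_{t_0}$. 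By the induction hypothesis this out-message is the same in $\apg_{\WP}$, so at every half-edge of $v$ the multiset of time-$t$ incoming messages is identical in the two models. Applying $\wpf$ therefore yields identical time-$\bc{t+1}$ out-messages at $v$, since in $\apg_{t_0}$ the update is performed by exactly this rule (Step~\ref{def:apgInstory}) and in $\apg_{\WP}$ by the WP operator.

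Both assertions of the proposition now follow by reading off the edges. For an edge with neither endpoint in $X_{t_0}$, applying the claim with $t = t_0$ to each endpoint gives equality of the entire length-$\bc{t_0+1}$ message history along the edge in both directions, proving the first assertion. For an edge incident to $X_{t_0}$ but not to $X_{t_0-1}$, neither endpoint lies in $X_{t_0-1}$, so applying the claim with $t = t_0 - 1$ to each endpoint yields agreement of the full message histories up through time $t_0 - 1$, proving the second assertion.

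The main obstacle is really notational rather than substantive: one must carefully unpack Step~\ref{def:apgInstory} of Definition~\ref{def:apg} together with the matching-consistency condition of Step~\ref{def:apgMatch} so as to identify the in-stories stored on half-edges at $v$ with the out-stories computed on the partner half-edges at neighbors of $v$. Once this identification is made explicit, the inductive step reduces to observing that the same update rule $\wpf$ is being applied to the same multiset of inputs in both models, and no probabilistic input is used anywhere in the argument.
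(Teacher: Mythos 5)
Your proof is correct and follows essentially the same inductive scheme as the paper: both arguments induct on time, use the consistency requirement of Step~\ref{def:apgMatch} to identify the in-message at a vertex with the out-message of its (matched, undeleted) neighbour, and invoke the Warning Propagation update rule to propagate agreement one step further. The only difference is a slight reparametrisation of the induction hypothesis (you track vertices outside $X_t$ for all out-messages up to time $t$, the paper tracks vertices outside $X_{t-1}$ for time-$t$ messages), which is cosmetic.
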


\begin{proof}
	Since the two underlying unmessaged graphs are the same, we just need to prove that at any time $0 \le t \le t_0$,
	the incoming and outgoing messages at a given vertex $v\notin X_{t-1}$ are the same for $\apg_{t_0}$ and $\apg_{\WP}$
	(where we set $X_{-1}:=\emptyset$). We will prove the first statement by induction on $t$.
	At time $t=0$, the statement is true by construction of $\apg_{\WP}$.
	Now suppose it is true up to time $t$ for some $0\le t \le t_0-1$ and consider an arbitrary directed edge $\bc{u,v}$ between vertices $u,v \notin X_t$.
	By Definition~\ref{def:apg}~$ \bc{2}$, the \outmessaget{(t+1)} from $u$ in $\apg_{t_0}$ is produced according to the rules of
	Warning Propagation based on the \inmessagest{t} to $u$ at time $t$.  Since $u \notin X_t$,
	none of its neighbours lie in $X_{t-1}$ and therefore by the induction hypothesis, these \inmessagest{t} are the same
	for $\apg_{t_0}$ and $\apg_{\WP}$. Hence, the \outmessaget{(t+1)} along $\bc{u,v}$ is also the same in $\apg_{t_0}$ and $\apg_{\WP}$.
This proves the first statement of the proposition, while the second follows from the inductive statement for $t=t_0-1$.
\end{proof}

In view of Proposition~\ref{prop:WPconsistent}, we need to know that not too many edges are incident to $X_{t_0}$.

\begin{proposition}
Let $t\in \NN$ be any constant. \Whp\ the number of edges of $\apg$ incident to $X_t$ is $o\bc{n}$.
\end{proposition}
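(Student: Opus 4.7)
My plan is to combine the bound $|X_0| \le |E_0| = o(n/\sqrt{c_0})$ from Proposition~\ref{prop:unmatched} with a Cauchy--Schwarz argument. First I would observe that every edge of $\apg$ incident to $X_t$ has at least one endpoint in the ball $B_t(u)$ of radius $t$ around some $u \in X_0$ in $\apg$, so the total number of such edges is bounded deterministically by $\sum_{u \in X_0} M_t(u)$, where $M_t(u)$ denotes the sum of $\apg$-degrees over all vertices in $B_t(u)$. Applying Cauchy--Schwarz then gives
\[
\sum_{u \in X_0} M_t(u) \le \sqrt{|X_0|} \cdot \sqrt{\sum_{u \in V(\apg)} M_t(u)^2},
\]
so the task reduces to showing that $\sum_u M_t(u)^2 = O(n)$ \whp; together with $|X_0| = o(n/\sqrt{c_0})$ this yields $\sum_{u \in X_0} M_t(u) = o(n/c_0^{1/4}) = o(n)$, as required, since $c_0 \to \infty$ by~\ref{PP:choicec0}.

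To establish the moment bound, I would use the fact that in Step~\ref{def:apgHalf} of the construction of $\apg$, the half-edges at each vertex of type $i$ are generated independently according to $\degdist_i$, and the matching step can only \emph{delete} half-edges. Consequently, $M_t(u)$ is stochastically dominated by the total degree up to depth $t$ in the multi-type Galton--Watson process $\cT_i$ started from a vertex of type $i$. By Remark~\ref{rem:finitemoments}, every moment of $\norm{\degdist_i}_1$ is finite, which in turn implies (by iterated conditioning over the $t$ generations of $\cT_i$) that this total degree has finite second moment. Hence $\Erw[M_t(u)^2] = O(1)$ uniformly over $u$, so $\Erw[\sum_u M_t(u)^2] = O(n)$, and Markov's inequality gives the claimed high-probability bound.

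The main technical obstacle is justifying the stochastic domination directly from the construction of $\apg$, since \ref{assump:LocS} concerns $\wpg$ rather than $\apg$, and we cannot yet invoke Lemma~\ref{lem:contiguity} (which is the very result we are in the middle of proving). This is essentially a multi-type configuration-model calculation: the simplicity and consistency constraints at Step~\ref{def:apgMatch} merely restrict the set of admissible matchings, so conditional on the half-edge profile any bounded-depth local neighbourhood in $\apg$ has distribution absolutely continuous with respect to its unconstrained counterpart, and the unconstrained neighbourhood is precisely the truncation $\cT_i^t$ by construction. Provided that the proportion of admissible matchings is not too small (which follows from the same concentration arguments used in the proof of Proposition~\ref{prop:unmatched}), the Radon--Nikodym derivative is bounded, and finite second moments transfer from $\cT_i$ to $\apg$.
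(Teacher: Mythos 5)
Your proof is valid in outline, but it takes a noticeably heavier route than the paper. The paper's argument is a pure first-moment calculation: conditioned on the half-edge profile, the expected number of edges of $\apg$ incident to $X_t$ is $\bc{O\bc{1}}^t\abs{X_0}$, because the local structure of $\apg$ has bounded average branching; combined with $\abs{X_0}\le\abs{E_0}=o\bc{n/\sqrt{c_0}}$ from Proposition~\ref{prop:unmatched} and Markov's inequality, this immediately gives $o\bc{n}$. You instead pass to a second-moment bound $\sum_u M_t\bc{u}^2=O\bc{n}$ and then apply Cauchy--Schwarz, which is overkill: once you know each summand $M_t\bc{u}$ has bounded expectation, linearity of expectation over $u\in X_0$ already gives $\Erw\brk{\sum_{u\in X_0}M_t\bc{u}}=O\bc{1}\cdot\abs{X_0}$, and Markov finishes. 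The extra slack in $c_0^{1/4}$ that you engineer via Cauchy--Schwarz buys you nothing the first moment doesn't.

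Two technical points are worth flagging. First, Markov from $\Erw\brk{\sum_u M_t\bc{u}^2}=O\bc{n}$ does not give $\sum_u M_t\bc{u}^2=O\bc{n}$ \whp\ with a deterministic constant; it gives $O\bc{\omega n}$ for any $\omega\to\infty$. This is harmless here because you have the $c_0^{1/4}$ margin to absorb a slowly growing $\omega$, but it should be stated. Second, the key structural input---that bounded-depth balls in $\apg$ behave like truncated Galton--Watson trees with offspring distribution having finite moments---is asserted rather than proved in both your write-up and the paper's. You are right to identify this as the main technical point, and your Radon--Nikodym remark gestures in the right direction, but it is not a complete argument as written; the paper simply invokes ``the average degree in $\apg$ is bounded'' at the same level of informality. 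So the two proofs are at comparable rigor on the crux, with yours being longer where it need not be.
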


\begin{proof}
The statement for $t=0$ is implied by the (slightly stronger) statement of Proposition~\ref{prop:unmatched}.
For general $t$, the statement follows since the average degree in $\apg$ is bounded.
More precisely, the expected number of edges of $\apg$ incident to $X_t$ is $\bc{O \bc{1}}^t \abs{X_0} = O\bc{1}\abs{X_0} = o\bc{n}$,
and an application of Markov's inequality completes the proof.
\end{proof}

\subsection{Final steps}

We can now complete the proof of Lemma~\ref{lem:contiguity}.

\begin{proof}[Proof of Lemma~\ref{lem:contiguity}] 
	We use the preceding auxiliary results to show that every step in the construction
	of $\apg_{t_0}$ closely mirrors a corresponding step in which we reveal partial information about
	$\wpg_{t_0}$.
	Let us first explicitly define these steps within $\wpg_{t_0}$ by revealing information
	one step at a time as follows.
	\begin{enumerate}
		\item First reveal the \incompilation\ at each vertex, modelled along half-edges.
		\item Next reveal all \outstories\ along each half-edge.
		\item Finally, reveal which half-edges together form an edge.
	\end{enumerate}
	
	Corollary~\ref{cor:similarhistories} shows that Step~\ref{def:apgHalf} in the construction of
	$\apg_{t_0}$ can be coupled
	with Step~\ref{def:apgHalf} in revealing $\wpg_{t_0}$ above in such a way that \whp\ the number of vertices
	on which they produce different results is at most $\frac{n}{a_0} = o\bc{n}$.
	Furthermore, Proposition~\ref{prop:WPconsistent} shows that,
	for those vertices for which the \incompilations\ are identical in Step~\ref{def:apgHalf},
	the \outstories\ generated in Step~\ref{def:apgInstory} of
	the construction of both $\apg_{t_0}$ and $\wpg_{t_0}$ must also be identical (deterministically).
	Therefore before the deletion of unmatched half-edges in Step~\ref{def:apgMatch} of the definition of $\apg_{t_0}$,
	\whp\ Condition~$\bc{2}$ of Definition~\ref{def:graphsclose} is satisfied.
	On the other hand, Proposition~\ref{prop:unmatched} states that \whp\ $o\bc{n/\sqrt{c_0}}=o\bc{n}$ half-edges are deleted,
	and therefore the condition remains true even after this deletion.
	
	Now in order to prove that we can couple the two models in such a way that the two edge sets
	are almost the same (and therefore Condition~$\bc{1}$ of Definition~\ref{def:graphsclose} is satisfied),
	we consider each potential \story\ $\vom \in \alphabet^{2\bc{t_0+1}}$
	in turn, and construct coupled random matchings of the corresponding half-edges.
	More precisely, let us fix $\vom$ and let $\apm$ be the number of half-edges with this \story\ 
	in $\apg_{t_0}$.
	Similarly, define $\wpm$ to be the corresponding number of half-edges in $\wpg_{t_0}$.
	Furthermore, let $\apr_1$ be the number of half-edges with \story\ $\vom$ in $\apg_{t_0}\setminus \wpg_{t_0}$,
	let $\apr_2$ be the number of half-edges with the ``dual \story'' $\vom^*$, i.e.\ the \story\ with \instory\ and \outstory\ switched,
	and correspondingly $\wpr_1,\wpr_2$ in $\wpg_{t_0}\setminus \apg_{t_0}$.
	
	For convenience, we will assume that $\vom^* \neq \vom$; the case
	when they are equal is very similar.

	Let us call an edge of a matching \emph{good} if it runs between two half-edges which
	are common to both models. Note that this does not necessarily mean it is common to both matchings,
	although we aim to show that we can couple in such a way that this is (mostly) the case.
	Observe that, conditioned on the number of good edges in a matching, we may
	first choose a matching of this size uniformly at random on the common half-edges,
	and then complete the matching uniformly at random (subject to the condition that we never match two
	common half-edges).
	
	Observe further that the matching in $\apg_{t_0}$ must involve at least $\apm-\apr_1-\apr_2$ good edges,
	and similarly the matching in $\wpg_{t_0}$ must involve at least $\wpm-\wpr_1-\wpr_2$,
	and therefore we can couple in such a way that at least
	$\min\{\apm-\apr_1-\apr_2,\wpm-\wpr_1-\wpr_2\}$ edges are identical, or in other words, the symmetric difference
	of the matchings has size at most $\max\{\apr_1+\apr_2,\wpr_1+\wpr_2\}$.
	
	Repeating this for each possible $\vom$, the total number of edges in the symmetric
	difference is at most twice the number of half-edges which are not common to both models.
	We have already shown that there are at most $o\bc{\frac{n}{a_0}} + o\bc{\frac{n}{\sqrt{c_0}}} = o\bc{\frac{n}{a_0}}$ vertices
	at which the \incompilations\ differ, and applying the second statement of Proposition~\ref{prop:incominghistoryconcentration},
	we deduce that \whp the number of half-edges which are not common to both models is at most
	$
	d_0 \cdot o\bc{\frac{n}{a_0}} + 2\cdot \frac{n}{c_0} = o\bc{n}
	$
	as required.
\end{proof}

\section{Subcriticality: The idealised change process}\label{sec:subcritical}

With Lemma~\ref{lem:contiguity} to hand, which tells us that $\wpg_{t_0}$ and $\apg_{t_0}$ look very similar,
we break
the rest of the proof of Theorem~\ref{thm:main} down into two further steps.

First, in this section, we describe an idealised approximation of how a change propagates
when applying $\WP$ repeatedly to $\proj{\wpg}_{t_0}$, and show that this approximation
is a subcritical process, and therefore quickly dies out. The definition of this idealised change process is motivated
by the similarity to $\apg_{t_0}$.

In the second step, in Section~\ref{sec:beyond} we will use Lemma~\ref{lem:contiguity} to prove formally that
the idealised change process closely approximates the actual change process, which therefore also quickly terminates.

\begin{definition}
	Given a probability distribution matrix $Q$ on $\alphabet$,
	we say that a pair of messages $\bc{\changeold{0},\changenew{0}}$ is a \emph{potential change} with respect to $Q$
	if there exist some $t \in \NN$ and some $\vom = \bc{\mu_0,\mu_1,\ldots,\mu_t} \in \Consist{t+1}$
	such that
	\begin{itemize}
		\item $\mu_{t-1}=\changeold{0}$;
		\item $\mu_t = \changenew{0}$;
		\item $\Pr\bc{\Matrdist{\vdistf{t}\bc{Q}}{\switch{g}\bc{\mu}} = \vom}>0$.
	\end{itemize}
	We denote the set of potential changes by $\potentialchanges\bc{Q}$.
\end{definition}

In other words, $\bc{\changeold{0},\changenew{0}}$ is a potential change
if there is a positive probability of making a change from $\changeold{0}$ to $\changenew{0}$ in the message at the root edge 
at some point in the Warning Propagation algorithm on a $\cT_{g\bc{\changeold{0}}}$ branching tree
when initialising according to $Q$.
The following simple claim will be important later.

\begin{claim}\label{claim:changeposprob}
	If $P$ is a fixed point and $\bc{\changeold{0},\changenew{0}} \in \potentialchanges\bc{P}$ with $g\bc{ \changeold{0}}=\bc{i,j}$, then $\Matrdist{P}{i,j}\bc{\changeold{0}}>0$ and $\Matrdist{P}{i, j}\bc{\changenew{0}}<1$.
\end{claim}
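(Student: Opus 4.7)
The plan is to read off both inequalities directly from Claim~\ref{claim:localhistorydistribution}, using only the hypothesis that $P$ is a fixed point of $\distf$. Unpacking the definition of $\potentialchanges\bc{P}$, I obtain some $t\in\NN$ and a consistent vector $\vom=\bc{\mu_0,\ldots,\mu_t}\in\Consist{t+1}$ satisfying $\mu_{t-1}=\changeold{0}$, $\mu_t=\changenew{0}$ and $\Pr_{\vdistphi{P}{t}}\bc{\vom}>0$; consistency of $\vom$ ensures that every coordinate $\mu_{t'}$ has source-target type $\bc{i,j}$, so the relevant entry of $\vdistf{t}\bc{P}$ is $\Matrdist{\vdistf{t}\bc{P}}{i,j}$.

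The only non-trivial step is to pass from the joint distribution to its $\bc{t-1}$-th marginal. By Claim~\ref{claim:localhistorydistribution}, this marginal equals $\Matrdist{\distf^{t-1}\bc{P}}{i,j}$, which, since $\distf\bc{P}=P$ and hence $\distf^{t-1}\bc{P}=P$, simplifies to $\Matrdist{P}{i,j}$. Because the joint distribution on $\alphabet^{t+1}$ assigns positive probability to a vector whose $\bc{t-1}$-th coordinate equals $\changeold{0}$, its $\bc{t-1}$-th marginal must do the same; this yields $\Matrdist{P}{i,j}\bc{\changeold{0}}>0$.

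For the second inequality, I use that the terminology ``change'' implicitly requires $\changeold{0}\neq\changenew{0}$ (otherwise the statement either follows vacuously from the first or would need a separate convention to rule out the degenerate case). Since $\Matrdist{P}{i,j}$ is a probability distribution on $\alphabet$ and the two values are distinct,
$$
\Matrdist{P}{i,j}\bc{\changenew{0}}\le 1-\Matrdist{P}{i,j}\bc{\changeold{0}}<1.
$$
No real obstacle arises in this argument; the one thing worth flagging is the convention for which matrix entry houses the distribution of a history of messages of type $\bc{i,j}$, a matter already fixed unambiguously by Definition~\ref{def:GWmsghistory} together with Claim~\ref{claim:localhistorydistribution}.
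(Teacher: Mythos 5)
Your proof is correct and follows essentially the same route as the paper: unpack the definition of $\potentialchanges\bc{P}$, apply Claim~\ref{claim:localhistorydistribution} together with the fixed-point identity $\distf^{t'}\bc{P}=P$, and deduce both bounds from the fact that $\Matrdist{P}{i,j}$ is a probability distribution. You are in fact slightly more careful than the paper's write-up, which refers to the $t$-th marginal where it should say the $(t-1)$-th (harmless, since all marginals collapse to $\Matrdist{P}{i,j}$ at a fixed point), and you also make explicit the implicit assumption $\changeold{0}\neq\changenew{0}$ needed for the second inequality.
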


\begin{proof}
	The definition of $\potentialchanges\bc{P}$ implies in particular that there exist a
	$t \in \NN$ and a $\vom\in \Consist{t+1}$  such that $\mu_{t-1} = \changeold{0}$
	and $\Pr\bc{\Matrdist{\vdistf{t}\bc{P}}{i, j}=\vom}>0$. Furthermore, by Claim~\ref{claim:localhistorydistribution},
	the marginal distribution of the $t$-th entry of $\Matrdist{\vdistf{t}\bc{P}}{i, j}$ is $\Matrdist{\distf^{t}\bc{P}}{i, j}=\Matrdist{P}{i, j}$ (since $P$ is a fixed point),
	and therefore we have $\Matrdist{P}{i,j}\bc{\changeold{0}} \ge \Pr\bc{\Matrdist{\vdistf{t}\bc{P}}{i, j}=\vom}>0$.
	
	On the other hand, since $\Matrdist{P}{i,j}$ is a probability distribution on $\alphabet$,
	clearly $\Matrdist{P}{i,j}\bc{\changenew{0}} \le 1- \Matrdist{P}{i,j}\bc{\changeold{0}}<1$.
\end{proof}

\subsection{The idealised change branching process}

Given a probability distribution matrix $Q$ on $\alphabet$ and a pair $\bc{\changeold{0}, \changenew{0}} \in \potentialchanges\bc{Q}$,
	we define a branching process
	$\changebranch = \changebranch \bc{\changeold{0},\changenew{0},Q}$ as follows. 
We generate an instance of $\cT_{ij}$, where $\bc{i,j}= \bar g\bc{\changeold{0}}$,
in particular including messages upwards to the directed root edge $\bc{v,u}$, so $u$ is the parent of $v$.
We then also initialise two messages downwards along this root edge, $\mu^{\bc{1}}_{u\to v} = \changeold{0}$ and $\mu^{\bc{2}}_{u\to v} = \changenew{0}$.
We track further messages down the tree based on the message that a vertex receives from its parent
and its children according to the WP update rule~$\wpf$.
Given a vertex $y$ with parent $x$, let $\mu^{\bc{1}}_{x \to y}$ be the resultant message
when the input at the root edge is $\mu^{\bc{1}}_{u\to v} = \changeold{0}$,
and similarly $\mu^{\bc{2}}_{x\to y}$ the resultant message when the input is $\mu^{\bc{2}}_{u\to v} = \changenew{0}$.
Finally, delete all edges $\bc{x,y}$ for which $\mu^{\bc{1}}_{x\to y}=\mu^{\bc{2}}_{x\to y}$, so
we keep only edges at which messages change (along with any subsequently isolated vertices).
It is an elementary consequence of the construction that $\changebranch$ is necessarily a tree.

\subsection{Subcriticality}

Intuitively, $\changebranch$ approximates the cascade effect that a single change
in a message from time $t_0-1$ to time $t_0$ subsequently causes (this is proved
more precisely in Section~\ref{sec:beyond}).
Therefore while much of this paper is devoted to showing that
$\changebranch$ is indeed a good approximation,
a very necessary task albeit an intuitively natural outcome,
the following result is the essential heart of the
proof of Theorem~\ref{thm:main}.

\begin{proposition}\label{prop:subcritical}
	If $P$ is a stable fixed point, then
	for any $\bc{\changeold{0},\changenew{0}} \in \potentialchanges\bc{P}$,
	the branching process $\changebranch = \changebranch \bc{\changeold{0},\changenew{0},P}$ is subcritical.
\end{proposition}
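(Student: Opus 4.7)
The plan is to bound the spectral radius of the mean offspring matrix $M$ of $\changebranch$ by exploiting the stability of $P$. Since $\distf$ is a polynomial map on the finite-dimensional space of probability distribution matrices, I would first linearise $\distf$ at $P$ to obtain a linear operator $L_P$ on the tangent space of signed matrices whose rows sum to zero. The stability hypothesis states exactly that $\distf$ is a $\dTV$-contraction with some constant $C<1$ on a neighbourhood of $P$, which forces $\|L_P\|_{\mathrm{op}}\le C$ and hence $\rho(L_P)\le C<1$.

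The core step is to relate $M$ to $L_P$. I would introduce the natural linear push-forward $\pi$ from the free vector space on change pairs to tangent vectors, sending $(\sigma,\tau)$ with $g(\sigma)=g(\tau)=(i,j)$ to the signed measure $\delta_\tau-\delta_\sigma$ placed in entry $(i,j)$. A first-order Taylor expansion of $\distf$ at $P$ --- tracking the effect of a single ``flipped'' child's message in the spirit of the proof of Claim~\ref{claim:localhistorydistribution} --- should identify the matrix entries of $L_P$ with precisely the $\mathcal{Y}$-averages that define $M$ via $\changebranch$, yielding the intertwining identity $L_P\circ\pi = \pi\circ M$. Iterating then gives $\|\pi(M^t\psi)\|_{\mathrm{TV}}\le C^t\|\pi(\psi)\|_{\mathrm{TV}}\to 0$ for any nonnegative function $\psi$ on change pairs.

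To conclude, suppose for contradiction that $\rho(M)\ge 1$. Perron--Frobenius furnishes a nonnegative right eigenvector $\psi$ with $M\psi=\rho(M)\psi$, and then $Q_\epsilon:=P+\epsilon\,\pi(\psi)$ is a valid probability distribution matrix for small $\epsilon>0$. The intertwining yields $\distf(Q_\epsilon)=P+\epsilon\,\rho(M)\,\pi(\psi)+O(\epsilon^2)$, so the stability bound $\dTV(\distf(Q_\epsilon),P)\le C\,\dTV(Q_\epsilon,P)$ forces $\rho(M)\le C<1$ as long as $\pi(\psi)\ne 0$. The main obstacle is ruling out the degenerate case $\pi(\psi)=0$: since $M$ commutes with the involution $(\sigma,\tau)\leftrightarrow(\tau,\sigma)$ and $\pi$ annihilates its symmetric part, the Perron eigenvector could a priori live in $\ker\pi$. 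Overcoming this is where the assumption $(\changeold{0},\changenew{0})\in\potentialchanges(P)$ enters crucially via Claim~\ref{claim:changeposprob}: one must use the reachability structure within $\potentialchanges(P)$ to argue that any stationary $\psi$ supported on the change pairs actually visited by $\changebranch$ carries a nontrivial antisymmetric (hence non-kernel) component, so that $\pi(\psi)\ne 0$ and the desired contradiction closes.
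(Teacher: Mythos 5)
Your high-level idea---perturb $P$ in the direction of a potential change, and play the contraction property of $\distf$ against the growth of the change process---is exactly the right instinct and is shared with the paper. The intertwining picture $L_P\circ\pi=\pi\circ M$ is also morally correct and corresponds to the first-order recursion~\eqref{eq:recursionapprox}--\eqref{eq:transitionmatrixentry} the paper derives by hand. However, the Perron--Frobenius route does not close, and the ``obstacle'' you flag at the end is not an obstacle that can be overcome by reachability: it is a genuine dead end.

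The problem is that the degenerate case $\pi(\psi)=0$ is not an a priori possibility but a near-certainty. You correctly note that $M$ commutes with the involution $S\colon(\sigma,\tau)\leftrightarrow(\tau,\sigma)$; indeed a change $(\sigma_2,\tau_2)$ producing a downstream change $(\sigma_1,\tau_1)$ is the mirror of $(\tau_2,\sigma_2)$ producing $(\tau_1,\sigma_1)$, so $\Matrdist{T}{S\changeboth{1},S\changeboth{2}}=\Matrdist{T}{\changeboth{1},\changeboth{2}}$. But then on any irreducible, $S$-invariant component of the state space the Perron eigenvector $\psi$ satisfies $S\psi=c\psi$ with $c^2=1$, and nonnegativity forces $c=1$, i.e.\ $\psi$ is \emph{always} symmetric and hence in $\ker\pi$. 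The reachability argument you gesture at cannot fix this: the reachable set from $(\changeold{0},\changenew{0})$ need not contain $(\changenew{0},\changeold{0})$, but whenever it does the restricted $M$ is still $S$-invariant and the same conclusion holds; and when it does not, there is no reason for a (non-unique, non-Perron) nonnegative stationary vector on that set to have nontrivial antisymmetric part. More fundamentally, the intertwining shows that $L_P$ is conjugate to $M$ restricted to the \emph{anti}symmetric subspace $W_-:=\ker(S+\mathrm{id})$, so $\|L_P\|_{\mathrm{op}}\le C$ can only ever bound $\rho(M\vert_{W_-})$. But $\rho(M)=\max\bigl(\rho(M\vert_{W_+}),\rho(M\vert_{W_-})\bigr)$, and for a nonnegative $M$ the symmetric part typically dominates (already visible in the $2\times2$ toy case $M=\bigl(\begin{smallmatrix}a&b\\ b&a\end{smallmatrix}\bigr)$, where $W_+$ carries $a+b$ and $W_-$ carries $|a-b|$). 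So stability of $\distf$ at the level of the linearisation simply does not see the part of $M$ you need to control, and no choice of $\psi$ repairs this.

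The paper's proof avoids this entirely by never passing to the linear operator $L_P$ or to an eigenvector. Instead it starts from the specific, non-symmetric perturbation $\vec x_0=\eps\,e_{\changeboth{0}}$, tracks the coupled change probabilities $x_s(\changeboth{1})$ through a finite-depth Galton--Watson tree, and derives the pointwise inequality $\vec x_{s+1}\ge T\vec x_s-O(\eps^2)$ together with the identification of $\|\vec x_s\|_1$ with $\dTV(P,\distf^s(Q))$. Crucially, $\|\vec x_s\|_1$ records the \emph{total} change probability---both the symmetric and antisymmetric content of $T^s\vec x_0$---so the contraction bound $\|\vec x_s\|_1\le(1-\delta)^s\eps$ really does pin down $\|T^s e_{\changeboth{0}}\|_1$, and the survival-probability lower bound $\|T^{t_1}e_{\changeboth{0}}\|_1\ge\rho$ gives the contradiction. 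In short: your approach and the paper's diverge precisely at the step where you pass from the nonlinear map $\distf$ to its linearisation $L_P$; that passage loses the symmetric part of $M$, which is exactly the part subcriticality is about.
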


\begin{proof}
Let us suppose for a contradiction that for some $\bc{\changeold{0},\changenew{0}}\in \potentialchanges\bc{P}$,
the branching process has survival probability $\rho >0$.
We will use the notation $a \muchless b$
to indicate that given $b$, we choose $a$ sufficiently small as a function of $b$.\footnote{In the literature
this is often denoted by $a \ll b$, but we avoid this notation since it has a very different meaning elsewhere in the paper.
In particular, here we aim to fix several parameters which are all constants rather than functions in $n$.}

Given $\rho$ and also $\alphabet,\wpf,P$, let us fix further parameters $\eps,\delta \in \RR$ and $t_1 \in \NN$
according to the following hierarchy:
$$
0<\eps \muchless \frac{1}{t_1} \muchless \delta \muchless \rho,\frac{1}{\abs{\alphabet}} \le 1.
$$
In the following, given an integer $t$ and messages $\changeold{t},\changenew{t}\in \alphabet$,
we will use the notation $\changeboth{t}:= \bc{\changeold{t},\changenew{t}}$.
 Let us define a new probability distribution matrix  $Q$ on $\alphabet$ as follows. For each $\bc{i,j} \in [k]^2$ and for all $\mu \in \alphabet$ 
$$
\Matrdist{Q}{i,j}\bc{\mu} :=
\begin{cases}
\Matrdist{P}{i,j}\bc{\mu} -\eps & \mbox{if } \bc{i,j}= g\bc{\changeboth{0}} \mbox{ and } \mu=\changeold{0};\\
\Matrdist{P}{i,j}\bc{\mu} +\eps &  \mbox{if } \bc{i,j}= g\bc{\changeboth{0}} \mbox{ and } \mu=\changenew{0}; \\
\Matrdist{P}{i,j} \bc{\mu} & \mbox{otherwise.}
\end{cases}
$$
In other words, we edit the probability distribution in the $g\bc{\changeboth{0}}$ entry of the matrix $P$ to shift some weight from $\changeold{0}$
to $\changenew{0}$, but otherwise leave everything unchanged.
Note that since $\bc{\changeold{0},\changenew{0}} \in \potentialchanges\bc{P}$ is a potential change,
for sufficiently small $\eps$, each entry $\Matrdist{Q}{i,j}$ of $Q$ is indeed a probability distribution
(by Claim~\ref{claim:changeposprob} for $\bc{i,j}=g\bc{\changeboth{0}}$ or trivially otherwise).

Let us generate the $t_1$-neighbourhood of a root vertex $u$ of type $i$ in a $\cT_{i}$
branching process and initialise messages from the leaves at depth $t_1$ according to both
$Q$ and $P$, where we couple in the obvious way so that all messages are
identical except for some which are $\changeold{0}$ under $P$ and $\changenew{0}$ under $Q$.
We call such messages \emph{changed} messages.

We first track the messages where we initialise with $P$ through the tree (both up and down) according to the Warning Propagation rules,
but without ever updating a message once it has been generated.
Since $P$ is a fixed point of $\wpf$,  each message $\mu$ either up or down in the tree has the distribution $\Matrdist{P}{g\bc{\mu}}$ (though clearly far from independently).

We then track the messages with initialisation according to $Q$ through the tree,
and in particular track where differences from the first set of messages occur.
Let $x_s\bc{\changeboth{1}}$ denote the probability that a message from a vertex at level
$t_1-s$ to its parent changes from $\changeold{1}$ to $\changenew{1}$.
Thus in particular we have
$$
x_0\bc{\changeboth{1}} =
\begin{cases}
\eps & \mbox{if } \changeboth{1} = \changeboth{0},\\
0 & \mbox{otherwise.}
\end{cases}
$$

Observe also that messages coming down from parent to child
``don't have time'' to change before we consider the message up (the changes from
below arrive before the changes from above). Since we are most interested in changes
which are passed \emph{up} the tree, we may therefore always consider a message coming
down as being distributed according to $P$
(more precisely, according to $\Matrdist{P}{i,j}$, where $i,j$ are the types of the parent
and child respectively).

We aim to approximate $x_{s+1}\bc{\changeboth{1}}$ based on $x_s$, so let us consider a vertex
$u$ at level $t_1-\bc{s+1}$ and its parent $v$.
Let us define $C_d=C_d\bc{u}$ to be the event that $u$ has precisely $d$ children.
Furthermore, let us define $D_u\bc{\changeboth{2}}$ to be the event
that exactly one change is passed up to $u$ from its children, and that this change is of type $\changeboth{2}$.
Finally, let $b_u\bc{\changeboth{1}}$ be the number of messages
from $u$ (either up or down) which change from $\changeold{1}$ to $\changenew{1}$ (there may be more changes of other types).

The crucial observation is that given the neighbours of $u$ and their types, each is equally likely to be the parent --
this is because the tree $\cT_i$ is constructed in such a way that, conditioned on the presence and type of the parent,
the type-degree distribution of a vertex of type $j$ is $\degdist_j$, regardless of what the type of the parent was.
 Therefore conditioned on the event $D_u\bc{\changeboth{2}}$ and
the values of $d$ and $b_u\bc{\changeboth{1}}$, apart from the one child from
which a change of type $\changeboth{2}$ arrives at $u$, there are $d$ other neighbours
which could be the parent, of which $b_u\bc{\changeboth{1}}$ will receive a change of type
$\changeboth{1}$.
Thus the probability that
a change of type $\changeboth{1}$ is passed up to the parent is precisely
$\frac{b_u\bc{\changeboth{1}}}{d}$.

Therefore in total, conditioned on $C_d$ and $D_u\bc{\changeboth{2}}$,
the probability $a_{d;\changeboth{1},\changeboth{2}}$ that a change of type $\changeboth{1}$ is passed on from $u$ to $v$
is
\begin{align*}
	a_{d;\changeboth{1},\changeboth{2}}
	& = \sum_{\ell=1}^d  \bc{\Pr\bc{b_u\bc{\changeboth{1}}=\ell \; | \; C_d \wedge D_u\bc{\changeboth{2}}} \cdot \frac{\ell}{d}}
	= \frac{1}{d} \cdot \Erw\bc{ b_u\bc{\changeboth{1}}  \; | \; C_d \wedge D_u\bc{\changeboth{2}}}.
	\end{align*}
Now observe that this conditional expectation term is exactly as in the change process.
More precisely, in the $\changebranch$ process we know automatically that only one change
arrives at a vertex, and therefore if we have a change of type $\changeboth{2}$, the event
$D_u \bc{\changeboth{2}}$ certainly holds. 
Therefore, letting $h=g_1\bc{\changeold{2}}$ and $\ell=g_2\bc{ \changeold{2} }$,
\begin{equation}\label{eq:transitionmatrixentry}
\sum_{d\ge 1} \sum_{\substack{ \vec{d} \in \Seq\bc{d} }} \Pr\bc{ \offdist{\ell}{h}=\vec{d} } d a_{d;\changeboth{1},\changeboth{2}}
= \Matrdist{T}{\changeboth{1},\changeboth{2}},
\end{equation}
where $\Seq\bc{d}$ is the set of sequences $\vec{d}:=\bc{d_1, \ldots, d_k} \in \NN_0^k$ such that $\sum_{\ell'=1}^{k} d_{\ell'}=d$ and
$T$ is the $\abs{\alphabet}^2 \times \abs{\alphabet}^2$ transition matrix associated with the $\changebranch$ change process,
i.e.\ the entry $\Matrdist{T}{\changeboth{1},\changeboth{2}}$ is equal to the expected number of changes of type
$\changeboth{1}$ produced in the next generation by a change of type $\changeboth{2}$.

On the other hand, defining $E_u$ to be the event that at least two children of $u$ send changed messages (of any type) to $u$, we also have
\begin{align}\label{eq:recursionapprox}
x_{s+1}\bc{\changeboth{1}} & \ge \sum_{d\ge 1} \sum_{ \vec{d} \in \Seq\bc{d}}\Pr\bc{\offdist{\ell}{h} = \vec{d}  } \sum_{\changeboth{2}\in \alphabet^2} a_{d;\changeboth{1},\changeboth{2}} \Pr\bc{D_u\bc{\changeboth{2}} | C_d} \nonumber \\
& \ge \sum_{d\ge 1} \sum_{\vec{d} \in \Seq\bc{d}}\Pr\bc{\offdist{\ell}{h} = \vec{d}  } \sum_{\changeboth{2}\in \alphabet^2} a_{d;\changeboth{1},\changeboth{2}}
\Big(dx_{s}\bc{\changeboth{2}} - \Pr \bc{E_u | C_d}\Big).
\end{align}
For each $s \in \NN$,
let $\vec x_s$ be the $\abs{\alphabet}^2$-dimensional vector whose
entries are $x_s\bc{\changes}$ for $\changes \in \alphabet^2$ (in some arbitrary order).
We now observe that, since $P$ is a stable fixed point, i.e.\ $\distf$ is a contraction on a neighbourhood of $P$,
and since $\dTV\bc{P,Q}=\eps$,
for small enough $\eps$ we have
$$
\sum_{\changes \in \alphabet^2} x_s\bc{\changes} = \| \vec x_s \|_1 = \dTV\bc{ P,\distf^s \bc{Q}} \le \dTV \bc{P,Q} = \| \vec x_0 \|_1 = \eps,
$$
and so we further have
\begin{equation}\label{eq:twochangeprob}
\Pr\bc{E_u | C_d} \le \binom{d}{2} \eps^2 \le d^2 \eps^2.
\end{equation}
Furthermore, we observe that since $a_{d;\changeboth{1},\changeboth{2}}$ is a probability term by definition, we have
\begin{equation}\label{eq:transitionsum}
\sum_{\changeboth{2}\in \alphabet^2}a_{d;\changeboth{1},\changeboth{2}} \le \sum_{\changeboth{2}\in \alphabet^2} 1 = \abs{\alphabet}^2.
\end{equation}
Substituting~\eqref{eq:transitionmatrixentry},~\eqref{eq:twochangeprob} and~\eqref{eq:transitionsum} into~\eqref{eq:recursionapprox}, we obtain
\begin{align*}
x_{s+1}\bc{\changeboth{1}}
&\ge  \sum_{\changeboth{2}\in \alphabet^2}\Matrdist{T}{\changeboth{1},\changeboth{2}}x_{s}\bc{\changeboth{2}}
- \abs{\alphabet}^2 \eps^2 \sum_{d\ge 1} d^2 \sum_{\vec{d} \in \Seq\bc{d}}\Pr\bc{\offdist{\ell}{h} = \vec{d}  } .
\end{align*}
Moreover, we have
\begin{align*}
	\sum_{d\ge 1} d^2 \sum_{\vec{d} \in \Seq\bc{d}}\Pr\bc{\offdist{\ell}{h} = \vec{d}  } = \sum_{d\ge 1} d^2 \Pr\bc{ \norm{\offdist{\ell}{h}}_1 = d  } =\Erw\bc{\norm{\offdist{\ell}{h}}_1^2 }.\\
\end{align*}

Now for any $h,\ell \in [k]$ we have that $\Erw\bc{\norm{\offdist{\ell}{h}}_1^2 }$ is finite by Remark~\ref{rem:finitemoments},
so defining $c:= \max_{h,\ell \in [k]} \Erw\bc{\norm{\offdist{\ell}{h}}_1^2 }$, we have
$$\abs{\alphabet}
\vec x_{s+1} \ge T \vec x_s -  c \abs{\alphabet}^2\eps^2
$$
(where the inequality is pointwise). As a direct consequence we also have
$
\vec x_{s} \ge T^s \vec x_0 - s c \abs{\alphabet}^2\eps^2
$
(pointwise), and therefore
$$
\| \vec x_s \|_1 \ge \|T^s\vec x_0\|_1 - s c \abs{\alphabet}^4\eps^2.
$$
Now since the change process has survival probability $\rho >0$
for the appropriate choice of $\changeboth{0}=\bc{\changeold{0},\changenew{0}}$, choosing $\vec x_0 = \eps \vec e_{\changeboth{0}}$
(where $\vec e_{\changeboth{0}}$ is the corresponding standard basis vector)
we have
$$
\| \vec x_{s} \|_1 \ge \| T^{s} \vec x_0 \|_1 - s c \abs{\alphabet}^4\eps^2 \ge \rho \|\vec x_0\|_1 - s c \abs{\alphabet}^4 \eps^2
=  \eps\bc{\rho - s c \abs{\alphabet}^4 \eps}.
$$
On the other hand, since $P$ is a stable fixed point,
there exists some $\delta>0$ such that
for small enough $\eps$ we have $\| \vec x_s \|_1 \le \bc{1-\delta}^s \eps$ for all $s$. In particular choosing
$s=t_1$,
we conclude that
$$
\eps\bc{\rho - t_1 c \abs{\alphabet}^4 \eps} \le \| \vec x_{t_1} \|_1 \le \bc{1-\delta}^{t_1} \eps. 
$$
However, since we have $\eps \muchless 1/t_1 \muchless \delta \muchless \rho,1/\abs{\alphabet}$,
we observe that
$$
\bc{1-\delta}^{t_1} \le \rho/2 < \rho - t_1 c \abs{\alphabet}^4\eps,
$$
which is clearly a contradiction.
\end{proof}

\section{Applying subcriticality: Proof of Theorem~\ref{thm:main}}\label{sec:beyond}

Our goal in this section is to use Proposition~\ref{prop:subcritical} to complete the proof of Theorem~\ref{thm:main}.

\subsection{A consequence of subcriticality}

	Recall that during the proof of Proposition~\ref{prop:subcritical} we defined the transition matrix
	$T$ of the change process $\changebranch$, which is a $\abs{\alphabet}^2 \times \abs{\alphabet}^2$ matrix where the entry
	$\Matrdist{T}{\changeboth{1},\changeboth{2}}$ is equal to the expected number of changes
	of type $\changeboth{1}$ that arise from a change of type $\changeboth{2}$.
	The subcriticality of the branching process is equivalent to $T^n \xrightarrow{n\to \infty} 0$
	(meaning the zero matrix), which is also equivalent to all eigenvalues of $T$ being strictly less than $1$ (in absolute value).
	We therefore obtain the following corollary of Proposition~\ref{prop:subcritical}.

\begin{corollary}\label{cor:subeigenvector}
	There exist a constant $\gamma>0$ and a positive real $\abs{\alphabet}^2$-dimensional vector $\vec \alpha$ (with no zero entries)
	such that
	$$
	T\vec \alpha \le \bc{1-\gamma}\vec \alpha
	$$
	(where the inequality is understood pointwise). We may further assume that $\| \vec \alpha \|_1 =1$.
\end{corollary}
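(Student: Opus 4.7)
My plan is to construct $\vec\alpha$ explicitly via a Neumann series and then read off $\gamma$ from its sup norm. The first step is to invoke Proposition~\ref{prop:subcritical} together with the equivalence already recorded in the excerpt to conclude that the spectral radius $\rho(T) < 1$. Before this, I would briefly verify that $T$ has finite entries so that $\rho(T)$ is well defined: by~\eqref{eq:transitionmatrixentry} each entry of $T$ is bounded above by $\Erw\bc{\norm{\offdist{\ell}{h}}_1}$ for the appropriate $h,\ell \in [k]$, and all such first moments are finite by Remark~\ref{rem:finitemoments}.

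Given $\rho(T) < 1$, the Neumann series $\sum_{k\ge 0} T^k$ converges to $\bc{I-T}^{-1}$. I would then set $\vec\alpha := \sum_{k=0}^\infty T^k \vec{1}$, where $\vec{1}$ denotes the all-ones vector in $\RR^{\abs{\alphabet}^2}$. Since $T$ has non-negative entries and the $k=0$ summand already contributes $\vec{1}$, every coordinate of $\vec\alpha$ lies in $[1,\infty)$; in particular $\vec\alpha$ is strictly positive and has finite sup norm. A telescoping of the series yields $T\vec\alpha = \vec\alpha - \vec{1}$, and choosing $\gamma := 1/\norm{\vec\alpha}_\infty > 0$ gives $\gamma\vec\alpha \le \vec{1}$ pointwise, hence
$$T\vec\alpha \;=\; \vec\alpha - \vec{1} \;\le\; \bc{1-\gamma}\vec\alpha.$$
The displayed inequality is homogeneous under positive scaling of $\vec\alpha$, so after rescaling to $\norm{\vec\alpha}_1 = 1$ (and adjusting $\gamma$ by a positive multiplicative constant accordingly) all the required properties are retained.

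The main obstacle here is conceptual rather than technical: pinning down the precise sense in which Proposition~\ref{prop:subcritical} delivers $\rho(T) < 1$. This is the Perron--Frobenius-type characterisation of subcriticality for a multi-type Galton--Watson process with finite mean offspring matrix, and it is standard once finiteness of the first moments is in hand. After this step, the remainder of the argument is a routine Neumann series manipulation with no surprises.
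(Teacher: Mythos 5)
Your proof is correct, and it takes a genuinely different route from the paper's. The paper perturbs $T$ to a strictly positive matrix $T' = T + \epsilon\vec{1}\vec{1}^\top$ with $\rho(T')<1$ still, invokes the Perron--Frobenius theorem to obtain a strictly positive eigenvector $\vec\alpha$ of $T'$ with eigenvalue $\rho(T')$, and then observes $T\vec\alpha < T'\vec\alpha = \rho(T')\vec\alpha$, setting $\gamma = 1-\rho(T')$. You instead build $\vec\alpha$ as the Neumann series $\sum_{k\ge 0}T^k\vec{1}$, which converges because $\rho(T)<1$, is entrywise $\ge 1$ because $T\ge 0$, and satisfies the telescoping identity $T\vec\alpha = \vec\alpha - \vec{1}$; choosing $\gamma = 1/\norm{\vec\alpha}_\infty$ then gives $\gamma\vec\alpha\le\vec{1}$ and hence the claimed inequality. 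Your argument is more elementary in that it avoids Perron--Frobenius altogether and only uses nonnegativity of $T$ plus $\rho(T)<1$, whereas the paper's argument is perhaps more transparent about where the positivity of $\vec\alpha$ comes from. Both require the same preliminary facts: $T\ge 0$ entrywise with finite entries (via Remark~\ref{rem:finitemoments}), and $\rho(T)<1$ from Proposition~\ref{prop:subcritical}.

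One small slip in the final sentence: the normalisation $\vec\alpha \mapsto \vec\alpha/\norm{\vec\alpha}_1$ requires no adjustment to $\gamma$ at all, since the inequality $T\vec\alpha \le (1-\gamma)\vec\alpha$ is homogeneous in $\vec\alpha$ with $\gamma$ held fixed. Saying that $\gamma$ must be ``adjusted by a positive multiplicative constant'' is harmless but incorrect --- the same $\gamma$ works verbatim after rescaling.
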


\begin{proof}
	Given some $\epsilon>0$, let $T' = T'\bc{\epsilon}$ be the matrix obtained from $T$ by adding $\epsilon$ to each entry.
	Thus $T'$ is a strictly positive real matrix and we may choose $\epsilon$ to be small enough such that all the
	eigenvalues of $T'$ are still less than 1 in absolute value.
	By the Perron-Frobenius theorem, there exists a positive real eigenvalue that matches the spectral radius $\rho\bc{T'}<1$ of $T'$.
	In addition, there exists a corresponding eigenvector to $\rho\bc{T'}$,
	say $\vec \alpha$, all of whose entries are non-negative;
	since every entry of $T'$ is strictly positive, it follows that in fact every entry of $\vec \alpha$ is also strictly positive.
	We have $T'\vec \alpha = \rho\bc{T'}\vec \alpha$, and
	we also note that $T\vec \alpha < T'\vec \alpha$ since every entry of $T'$ is strictly greater than the corresponding entry of $T$.
	Thus we deduce that $T\vec\alpha < \rho\bc{T'}\vec\alpha$, and setting $\gamma := 1-\rho\bc{T'} >0$, we have the desired statement.
	
	The final property that $\|\vec \alpha \|_1=1$ can be achieved simply through scaling by an appropriate (positive) normalising constant,
	which does not affect any of the other properties of $\vec \alpha$.
\end{proof}

	However, let us observe that in fact the change process that we want to consider is slightly different -- rather than
	having \inmessages\ distributed according to $P$, they should be distributed according to $\distf^{t_0-1}\bc{\initialdistribution}$.
	Since $P$ is the stable limit of $\initialdistribution$, this is arbitrarily close, but not exactly equal, to $P$.
	We therefore need the following.

\begin{corollary}\label{cor:subeigenvector2}
		There exists $\delta_0>0$ sufficiently small that for any probability distribution
		$Q$ on $\alphabet$ which satisfies $\dTV\bc{P, Q} \le\delta_0$, the following holds.
		Let $\changebranch_1= \changebranch\bc{\changeold{0},\changenew{0},Q}$
		and let  $T_1$ be the transition matrix of $\changebranch_1$.
		Then
		there exist a constant $\gamma>0$ and a positive real $\abs{\alphabet}^2$-dimensional vector $\vec \alpha$ (with no zero entries)
		such that
		$$
		T_1\vec \alpha \le \bc{1-\gamma}\vec \alpha
		$$
		(where the inequality is understood pointwise).
\end{corollary}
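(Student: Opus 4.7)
The plan is a perturbation argument. Fix the vector $\vec\alpha$ and constant $\gamma>0$ given by Corollary~\ref{cor:subeigenvector}, so that $T\vec\alpha\le\bc{1-\gamma}\vec\alpha$ pointwise. The strategy is to show that every entry $\Matrdist{T_1}{\changeboth{1},\changeboth{2}}$ is continuous in $Q$ at $Q=P$; then, for $\dTV\bc{P,Q}$ sufficiently small, the same $\vec\alpha$ will satisfy $T_1\vec\alpha\le\bc{1-\gamma/2}\vec\alpha$, which yields the corollary with the same (strictly positive) $\vec\alpha$ and contraction constant $\gamma':=\gamma/2>0$.

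For the continuity step, I would write each entry of $T_1$ analogously to~\eqref{eq:transitionmatrixentry}, as
\[
\Matrdist{T_1}{\changeboth{1},\changeboth{2}}=\sum_{d\ge 1}\sum_{\vec{d}\in\Seq\bc{d}}\Pr\bc{\offdist{\ell}{h}=\vec{d}}\cdot d\cdot a_{d;\changeboth{1},\changeboth{2}}\bc{Q},
\]
where $a_{d;\changeboth{1},\changeboth{2}}\bc{Q}\in\brk{0,1}$ is a conditional probability governed by finitely many applications of $\wpf$ to multisets of messages drawn from distributions on $\alphabet$ determined by $Q$ (the sibling upward messages together with the downward message from the parent). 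Each $a_{d;\changeboth{1},\changeboth{2}}\bc{Q}$ is therefore a polynomial in finitely many entries of $Q$, and so continuous in $Q$ at $Q=P$. By Remark~\ref{rem:finitemoments}, $\Erw\bc{\norm{\offdist{\ell}{h}}_1}<\infty$, so for any $\eta>0$ the tail contribution from $d>D$ is at most $\eta$ for $D$ sufficiently large, uniformly in $Q$ (the degree distribution itself does not depend on $Q$). The remaining finite sum over $d\le D$ is continuous in $Q$, establishing the claim. Given this, set $\epsilon:=\gamma\min_{\changes}\alpha_{\changes}/\bc{2\abs{\alphabet}^2\norm{\vec\alpha}_\infty}$ and choose $\delta_0>0$ small enough that $\dTV\bc{P,Q}\le\delta_0$ forces $\abs{\Matrdist{T_1}{\changeboth{1},\changeboth{2}}-\Matrdist{T}{\changeboth{1},\changeboth{2}}}\le\epsilon$ for every entry. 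Pointwise then $T_1\vec\alpha\le T\vec\alpha+\abs{\alphabet}^2\epsilon\norm{\vec\alpha}_\infty\vecone\le\bc{1-\gamma}\vec\alpha+\bc{\gamma/2}\vec\alpha=\bc{1-\gamma/2}\vec\alpha$, as required.

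The main obstacle is justifying the polynomial (hence continuous) dependence of $a_{d;\changeboth{1},\changeboth{2}}\bc{Q}$ on $Q$, since the upward messages in $\cT_{ij}$ are not drawn directly from $Q$ but emerge from WP dynamics from the leaves upward. The cleanest route is a coupling: couple instances of $\cT_{ij}$ equipped with $Q$- and $P$-messages so that the message on each edge agrees except with probability at most $\dTV\bc{P,Q}$, then propagate this coupling through the bounded-depth portion of the tree that feeds into $a_{d;\changeboth{1},\changeboth{2}}$. A union bound over this neighbourhood, combined with the finite moments of $\norm{\offdist{\ell}{h}}_1$ from Remark~\ref{rem:finitemoments}, shows $\abs{\Matrdist{T_1}{\changeboth{1},\changeboth{2}}-\Matrdist{T}{\changeboth{1},\changeboth{2}}}=O\bc{\dTV\bc{P,Q}}$ in the truncated regime, which comfortably suffices.
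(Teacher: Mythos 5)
Your proof is correct and takes essentially the same approach as the paper: both argue that $T_1$ depends continuously on $Q$, so that for $\dTV(P,Q)$ small the entries of $T_1$ are within $\epsilon$ of those of $T$, and both then reuse the Perron--Frobenius eigenvector $\vec\alpha$ from Corollary~\ref{cor:subeigenvector}. The paper's version simply asserts the entrywise continuity and concludes via $T_1\le T'$ pointwise (keeping the same $\gamma$), whereas you flesh out the continuity step via truncation and polynomial dependence and settle for $\gamma/2$; both are valid.
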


	In other words, the same statement holds for $T_1$,
	the transition matrix of this slightly perturbed process, as for~$T$. In particular,
	$\changebranch_1$ is also a subcritical branching process.

\begin{proof}
		Observe that since
		$\dTV\bc{P, Q}\le \delta_0$, for any $\epsilon$ we may pick $\delta_0=\delta\bc{\epsilon}$ sufficiently small such that
		$T_1$ and $T$ differ by at most $\epsilon$ in each entry. In other words, we have
		$T_1 \le T'$ pointwise, where $T'=T'\bc{\epsilon}$ is as defined in the proof of Corollary~\ref{cor:subeigenvector}.
		Thus we also have $T_1\vec \alpha \le T' \vec \alpha = \rho\bc{T'}\vec \alpha = \bc{1-\gamma} \vec \alpha$
		as in the previous proof.
\end{proof}

For the rest of the proof, let us fix
$\delta$ as in Theorem~\ref{thm:main} and a constant $\delta_0 \muchless \delta$ small enough that
the conclusion of Corollary~\ref{cor:subeigenvector2} holds,
and also such that \whp\ $\sum_{i=1}^k n_i \le \delta_0^{-1/100} n$,
 which is possible because by Claim \ref{claim:classsizes}
we have $n_i = \bc{1+o \bc{1}}\Erw\bc{ n_i}=\Theta\bc{n}$ \whp.
Moreover, suppose that $t_0$ is large enough that $P':=\distf^{t_0-1}\bc{\initialdistribution}$
satisfies $\dTV \bc{P, P'} \le \delta_0$ (this is possible since
$\distf^{*}\bc{\initialdistribution}=P$).

\subsection{The marking process}

We now use the idealised form $\changebranch_1$ of the change process to give an upper bound on the (slightly messier) actual process.
For an upper bound, we will slightly simplify the process of changes made by $\WP$ to obtain
$\WP^*\bc{\proj{\wpg}_{t_0}} = \WP^*\bc{\proj{\wpg}_0}$ from $\proj{\wpg}_{t_0}$.
\footnote{Note here that with a slight abuse of notation, we use $\WP$ to denote the obvious function on $\messgraphs{n}$
	which, given a graph $G$ with messages $\vec \mu \in \msgspace \bc{G}$,
	maps $\bc{G,\vec \mu}$ to $\WP\bc{G,\vec \mu}:= \bc{G,\WP_G\bc{\vec \mu}}$.}

We will reveal the information in $\wpg_{t_0}$ a little at a time as needed.

\begin{itemize}
	\item Initialisation
	\begin{itemize}
		\item We first reveal the \inpst{t_0} at each vertex,
		and the corresponding \outstories\ according to the update rule $\wpf$.
		We also generate the outgoing messages at time $t_0+1$.
		Any half-edge whose \outmessaget{t_0} is $\changeold{0}$ and whose \outmessaget{(t_0+1)} is $\changenew{0}\neq \changeold{0}$
		is called a \emph{change of type $\changeboth{0}$}.
		
		\item For each \outstory\ which includes a change, this half-edge is \emph{marked}.
	\end{itemize}
	\item We continue with a \emph{marking process}:
	\begin{itemize}
		\item Whenever a half-edge at $u$ is marked, we reveal its partner $v$. The edge $uv$ is marked.
		\item If $v$ is a new vertex (at which nothing was previously marked), if the degree of $v$ is at most $k_0$
		and if the \inps\ are identical in $\wpg_{t_0}$ and $\apg_{t_0}$,
		we consider the remaining half-edges at $v$ and apply the rules of Warning Propagation to determine whether
		any \outmessages\ will change. Any that do become marked. We call such a vertex a \emph{standard vertex}.
		\item If $v$ does not satisfy all three of these conditions, we say that we have \emph{hit a \mine}.
		In particular:
		\begin{itemize}
			\item If $v$ is a vertex that we have seen before, it is called a \emph{\landmine} vertex;
			\item If $v$ is a vertex of degree at most $d_0$ whose \inps\ are different
			according to $\wpg_{t_0}$ and $\apg_{t_0}$, it is called an \emph{\seamine} vertex;
			\footnote{Note that error vertices include in particular those at which we deleted unmatched half-edges
			in Step~\ref{def:apgMatch} of the construction of $\apg_{t_0}$.}
			\item If $v$ is a vertex of degree larger than $d_0$, it is called a \emph{\nuclearmine} vertex.
		\end{itemize}
		In each case, all of the half-edges at $v$ become marked. Such half-edges are called \emph{\shrapnel} edges,
		and are further classified as \emph{\rubble}, \emph{\spray} and \emph{\fallout} respectively,
		according to the type of \mine\ we hit.
		The corresponding messages
		can change arbitrarily (provided each individual change is in $\potentialchanges\bc{P}$).
	\end{itemize}
\end{itemize}

Note that a \landmine\ vertex may also be either an \seamine\ or a \nuclearmine\ vertex. However,
by definition, no \mine\ is both an \seamine\ and a \nuclearmine\ vertex.

We first justify that this gives an upper bound on the number of changes made by Warning Propagation.
Let $\WPchanges$ be the set of edges on which the messages are different in $\proj{\wpg}_{t_0}$ and
in $\WP^*\bc{\proj{\wpg}_{t_0}}$, and let $\markchanges$ be the set of edges which are marked at the end of the marking process.
Note that the set $\markchanges$ is not uniquely defined, but depends on the arbitrary
choices for the changes which are made at \mines.

\begin{proposition}\label{prop:changescontainment}
	There exists some choice of the changes to be made at \mines\ such that
	$
	\WPchanges \subset \markchanges.
	$
\end{proposition}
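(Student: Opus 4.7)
The plan is to prove by induction on $t \geq t_0$ the following strengthened statement: for every directed edge $\bc{u,v}$, if $\mu_{u\to v}\bc{t} \neq \mu_{u\to v}\bc{t_0}$ then the half-edge at $u$ pointing to $v$ has been marked by the marking process (and so the edge $uv$ lies in $\markchanges$). Since $\WP^*\bc{\proj{\wpg}_{t_0}}$ is reached after finitely many iterations of $\WP$, taking $t$ large enough then yields $\WPchanges \subset \markchanges$.

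The base case $t=t_0+1$ is immediate from the initialisation step: every half-edge whose \outmessaget{t_0} differs from its \outmessaget{(t_0+1)} is marked, its partner is then revealed, and the edge joins $\markchanges$. For the inductive step, suppose $\mu_{u\to v}\bc{t+1} \neq \mu_{u\to v}\bc{t_0}$. If $\mu_{u\to v}\bc{t} \neq \mu_{u\to v}\bc{t_0}$ already, the induction hypothesis applies directly. Otherwise $\mu_{u\to v}\bc{t+1} \neq \mu_{u\to v}\bc{t}$, and since the WP updates at $u$ producing these two messages use the \inmessages\ at times $t$ and $t-1$ respectively, there must exist a neighbour $u' \neq v$ of $u$ with $\mu_{u'\to u}\bc{t} \neq \mu_{u'\to u}\bc{t-1}$. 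At least one of these two values differs from $\mu_{u'\to u}\bc{t_0}$, so by the inductive hypothesis applied at the corresponding earlier time, the half-edge at $u'$ pointing to $u$ is marked; this in turn causes $u$ to be revealed.

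Once $u$ is revealed, one of two things happens. If $u$ is a \mine, all half-edges at $u$ are marked by definition of the marking process, and in particular the outgoing half-edge to $v$ is marked. If instead $u$ is a standard vertex, then $u$ has not been seen before, its degree is at most $d_0$, and its \inp\ in $\wpg_{t_0}$ matches that in $\apg_{t_0}$; the marking process then applies the WP update rule locally at $u$ using the newly revealed marked \inmessages, and since we have just verified that the actual WP update at $u$ produces a genuine change along the edge to $v$, this computation marks the outgoing half-edge to $v$ as required. To finish, one checks that every change produced along the way lies in $\potentialchanges\bc{P}$: each such change is the result of applying $\wpf$ to a multiset of messages that is realised with positive probability on the corresponding $\cT_{ij}$ tree under the initialisation $P'$, which by Proposition~\ref{prop:historyposprob} (and the closeness of $P'$ to $P$) places it in $\potentialchanges\bc{P}$.

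The main obstacle is arranging the marking process to execute in the causal order dictated by WP, so that when a standard vertex $u$ is revealed at the moment required by the induction, all the relevant incoming changes at $u$ have already been recorded and the local WP update can be applied to determine the outgoing changes correctly. This is essentially a bookkeeping matter — the marking process can be viewed as processing changes generation by generation, starting from the initial changes between times $t_0$ and $t_0+1$ — but it must be set up carefully; once it is, the inductive step above reduces to a single WP update at a vertex free of the obstructions (repeat, high degree, or $\wpg$/$\apg$ mismatch) that would otherwise turn $u$ into a \mine, while the freedom afforded by \mines\ to absorb arbitrary changes handles every remaining case.
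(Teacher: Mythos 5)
Your proposal is correct and takes essentially the same approach as the paper: an induction on the round number showing that every WP change up to round $t$ has a corresponding marked half-edge, with standard vertices handled by matching the marking process's local WP computation to the actual one, and snag vertices handled by the freedom to choose the changes to agree with WP. Your write-up is somewhat more detailed — in particular you flag the need to check that actual WP changes lie in $\potentialchanges(P)$, a point the paper's proof leaves implicit — but the inductive skeleton and the base case are the same.
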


\begin{proof}
	We proceed in rounds indexed by $t \in \NN_0$. We define
	$\WPchanges\bc{t}$ to be the set of edges on which the messages are different in $\WP^t\bc{\proj{\wpg}_{t_0}}$ compared
	to $\proj{\wpg}_{t_0}$, while $\markchanges\bc{t}$ is the set of edges which are marked after $t$ steps of the marking process.
	Since $\WPchanges = \lim_{t\to \infty}\WPchanges\bc{t}$ and $\markchanges = \lim_{t\to \infty} \markchanges\bc{t}$,
	it is enough to prove that for each $t \in \NN_0$ we have $\WPchanges\bc{t} \subset \markchanges \bc{t}$,
	which we do by induction on $t$.
	
	The base case $t=0$ is simply the statement that the set of initial marks contains the changes from $\proj{\wpg}_{t_0}$ to
	$\proj{\wpg}_{t_0+1}$,
	which is clearly true by construction.
	
	For the inductive step, each time we reveal the incoming partner of a marked outgoing half-edge,
	if this is a vertex at which nothing was previously marked, i.e.\ a standard vertex, then we proceed
	with marking exactly according to Warning Propagation.
	
	On the other hand, if at least one edge was already marked at this vertex we simply mark \emph{all} the outgoing half-edges,
	and if we choose the corresponding changes according to the changes that will be made by Warning Propagation, the induction
	continues.
\end{proof}

In view of Proposition~\ref{prop:changescontainment}, our main goal is now the following.

\begin{lemma}\label{lem:fewmarked}
	At the end of the marking process, \whp\ at most $\sqrt{\delta_0}n$ edges are marked.
\end{lemma}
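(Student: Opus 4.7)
The plan is to combine the subcritical structure of the change branching process with a potential argument. Apply Corollary~\ref{cor:subeigenvector2} with $Q := P'$, obtaining a constant $\gamma>0$ and a strictly positive, constant-dimensional vector $\vec\alpha$ satisfying $T_1\vec\alpha \le (1-\gamma)\vec\alpha$, where $T_1$ is the transition matrix of $\changebranch(\cdot,\cdot,P')$. Assign each marked half-edge of change type $\changes \in \alphabet^2$ the weight $\alpha_\changes$; write $\alpha_{\min}, \alpha_{\max}$ for the extreme entries of $\vec\alpha$. Process marks in generations, where generation $0$ is the set of initial marks and generation $g+1$ consists of the new marks produced while handling the partners of generation-$g$ marks; let $\Phi_g$ be the total weight of generation $g$.

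First I would bound the initial weight $\Phi_0$. Using Lemma~\ref{lem:contiguity} to couple $\wpg_{t_0}$ with $\apg_{t_0}$ so that they agree on all but $o(n)$ edges, the initial marks correspond to half-edges where the outgoing message differs between times $t_0$ and $t_0+1$. In $\apg_{t_0}$ these messages have marginal distributions $\distf(P')$ and $\distf^2(P')$, both within $O(\delta_0)$ of $P$ by the contraction property of $\distf$ at $P$. A concentration argument analogous to that in the proof of Proposition~\ref{prop:unmatched} then gives $\Phi_0 \le \alpha_{\max} C_1 \delta_0 n$ w.h.p.\ for some constant $C_1$.

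Next I would count the mines. Errors are bounded by Corollary~\ref{cor:similarhistories} (at most $n/a_0$ vertices) and freaks by Proposition~\ref{prop:incominghistoryconcentration} (at most $n/c_0$ vertices); by~\ref{PP:choicec0} their combined shrapnel is $\Delta_0(n/a_0 + n/c_0) = o(n)$. The crucial propagation step is for standard vertices: conditional on a generation-$g$ mark of type $\changes$ arriving at a standard vertex, the \incompilation\ there is identical to that in $\apg_{t_0}$ and hence locally distributed according to $P'$, so the distribution of induced outgoing changes is precisely one step of $\changebranch(\cdot,\cdot,P')$. The expected $\vec\alpha$-weight of new marks produced at this vertex is therefore at most $(T_1\vec\alpha)_\changes \le (1-\gamma)\alpha_\changes$. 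Summing over generations yields $\Erw[\sum_g\Phi_g] \le \gamma^{-1}\bigl(\Phi_0 + \alpha_{\max}\Delta_0 \cdot \#\text{mines}\bigr)$, which together with $M \le \alpha_{\min}^{-1}\sum_g\Phi_g$ and Markov gives an inequality of the form $M \le C_3(\delta_0 n + \Delta_0 \cdot \#\text{mines})$ w.h.p.

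The main obstacle is the duplicate count, which is self-referential: a duplicate requires either that the marking process close a short cycle (limited by the $O(n/\omega_0)$ vertices near cycles of length $\le 2t_0$ by Claim~\ref{claim:nearshortcycle}) or that two cascades collide, the latter contributing at most $O(M^2\Delta_0/n)$ via the bounded degree and local tree assumption~\ref{assumpnew:dtv}. I would resolve this by a two-stage bootstrap: first discard the duplicate term and use only $\Phi_0$, errors and freaks to obtain a crude a priori bound $M = o(n)$; this forces $M^2\Delta_0/n = o(n)$, so the total duplicate shrapnel is $o(n)$. Reinserting into the main inequality yields $M \le C_4 \delta_0 n$, and since $\delta_0$ is sufficiently small, $C_4 \delta_0 \le \sqrt{\delta_0}$, completing the proof.
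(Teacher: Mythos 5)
The core idea you use—the eigenvector $\vec\alpha$ from Corollary~\ref{cor:subeigenvector2} as a potential function, plus bounding the three kinds of mines—is exactly the engine of the paper's proof, so you have identified the right ingredients. The paper's argument is organised around three explicit \emph{stopping conditions}: it runs the marking process only until exhaustion, or until ``expansion'' (some change type $\changeboth{1}$ has accumulated $\delta_0^{3/5}\alpha_{\changeboth{1}}n$ changes), or until ``explosion'' ($\delta_0^{2/3}n$ shrapnel edges). It then proves separately (Propositions~\ref{prop:noexplosion} and~\ref{prop:noexpansion}) that expansion and explosion are each w.h.p.\ not the cause of termination, whence exhaustion occurs and the count is at most $\bc{\delta_0^{3/5}+\delta_0^{2/3}}n\le\sqrt{\delta_0}n$.

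Where your proposal genuinely diverges—and where it has a real gap—is in the handling of duplicates. You propose a two-stage bootstrap: ignore duplicate shrapnel, deduce $M=o\bc{n}$, conclude the duplicate shrapnel is $O\bc{M^2\Delta_0/n}=o\bc{n}$, then reinsert. This is circular in a way you do not resolve: the set of edges you mark if you ``discard the duplicate term'' is \emph{not} a subset of, nor comparable to, the actual marked set, because in the actual process duplicate shrapnel feeds straight back into the branching cascade (it can create further standard-vertex marks, further duplicates, and so on). Writing $M\le\tilde M+o\bc{n}$ where $\tilde M$ is the duplicate-free count therefore does not follow, and without a hard cap on the number of steps, the estimate $\Pr\brk{\text{hit duplicate in one step}}=O\bc{M\Delta_0/n}$ cannot be iterated $M$ times (there are also $\Delta_0$ shrapnel edges per duplicate, so the contribution is $M^2\Delta_0^2/n$, not $M^2\Delta_0/n$). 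The paper's stopping conditions are precisely what cut this Gordian knot: they deterministically bound the number of steps by $\sqrt{\delta_0}n$, which makes the duplicate second-moment computation in Claim~\ref{claim:rubblebound} valid, and the eigenvector argument in Proposition~\ref{prop:noexpansion} then closes the loop by showing the expansion cap is not actually hit. To repair your proof you would need to introduce some equivalent deterministic cap (e.g.\ ``stop at $\sqrt{\delta_0}n$ marks''), run the probability estimates inside it, and then argue the cap is not reached—at which point you have essentially reproduced the paper's structure. I would also flag that your bound on $\Phi_0$ is not quite as immediate as stated: you need the number of half-edges whose message \emph{changes} between time $t_0$ and $t_0+1$, which is controlled by a joint (not merely marginal) closeness of $\vdistphi{\initialdistribution}{t_0}$ and $\vdistphi{\initialdistribution}{t_0+1}$; the paper treats this implicitly via the expansion stopping condition rather than a direct concentration estimate.
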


During the proof of Lemma~\ref{lem:fewmarked}, we will make extensive use of the following fact.

\begin{claim}\label{claim:manyhistories}
	\Whp, for every $\vom \in \alphabet^{t_0+1}$ such that $ \Pr_{\viteratdist{t}}\bc{ \vom} \neq 0$,
	the total number of \inps\ of $\vom$
	over all vertices is at least $\delta_0^{1/100}n$.
\end{claim}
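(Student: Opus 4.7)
The plan is to count such half-edges first in the auxiliary model $\apg_{t_0}$, where the \instories\ are drawn independently, and then transfer the lower bound to $\wpg_{t_0}$ via Lemma~\ref{lem:contiguity}. Fix $\vom \in \alphabet^{t_0+1}$ with $p := \Pr_{\viteratdist{t_0}}\bc{\vom} > 0$ and write $\bc{j,i} := g\bc{\vom}$; any half-edge with \instory\ $\vom$ must be of type $\bc{i,j}$, i.e.\ sit at a vertex of type $i$ with a (dummy) neighbour of type $j$.

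In $\apg_{t_0}$, Step~\ref{def:apgHalf} of Definition~\ref{def:apg} draws degrees independently from $\degdist_i$ and, independently again, assigns each half-edge of type $\bc{i,j}$ an \instory\ from $\Matrdist{\vdistphi{\initialdistribution}{t_0}}{j,i}$, so this \instory\ equals $\vom$ with probability exactly $p$. Conditioning on the high-probability events $n_i = \Theta\bc{n}$ from Claim~\ref{claim:classsizes} and $\Delta\bc{\wpg}\le\Delta_0$ from~\ref{assump:Delta}, the number $X$ of half-edges carrying \instory\ $\vom$ becomes a sum of independent $\Delta_0$-bounded random variables, one per type-$i$ vertex, with $\Erw\bc{X}=p\,\Erw\bc{n_i}\,\Erw\bc{\degdist_{ij}}=\Theta\bc{np}$. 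Hoeffding's inequality therefore yields $X=\bc{1+o\bc{1}}\Erw\bc{X}=\Theta\bc{np}$ whp, and Proposition~\ref{prop:unmatched} shows that the subsequent deletion of unmatched half-edges in Step~\ref{def:apgMatch} only removes $o\bc{n/\sqrt{c_0}}=o\bc{n}$ half-edges, so the $\Theta\bc{np}$ lower bound persists in $\apg_{t_0}$ itself.

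To move back to $\wpg_{t_0}$, I would invoke Lemma~\ref{lem:contiguity} with a coupling parameter $\delta'$ chosen far smaller than the previous lower bound; the lemma gives $\wpg_{t_0}\close{\delta'}\apg_{t_0}$ whp, so the two models differ on at most $\delta' n$ directed edges, and in particular their counts of \instory-$\vom$ half-edges differ by at most $2\delta' n$. Hence the $\Theta\bc{np}$ lower bound passes to $\wpg_{t_0}$.

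Finally, since $\abs{\alphabet}^{t_0+1}=O\bc{1}$, the set of $\vom$ with $\Pr_{\viteratdist{t_0}}\bc{\vom}>0$ is finite, and the minimum positive value of this probability, together with the smallest positive $\Erw\bc{\degdist_{ij}}$ over admissible pairs, are fixed positive constants depending only on $\wpf,\degdist,\initialdistribution,t_0$. Choosing $\delta_0$ sufficiently small that $\delta_0^{1/100}$ lies below a fixed constant multiple of these quantities and then union-bounding over the $O\bc{1}$ relevant $\vom$ completes the argument. The only step requiring some care is the Hoeffding bound, but this becomes routine once we have conditioned on the class sizes (Claim~\ref{claim:classsizes}) and the maximum degree (\ref{assump:Delta}); the other potential sources of loss have already been controlled by Lemma~\ref{lem:contiguity} and Proposition~\ref{prop:unmatched}.
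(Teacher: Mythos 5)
Your route is genuinely different from the paper's, and substantially longer. The paper simply observes that since $\vom$ has positive probability as an \instory, it sits inside some \incompilation\ $A$ with $\gamma_A>0$, and then invokes Proposition~\ref{prop:incominghistoryconcentration}, which already gives $n_A(\wpg_{t_0})\ge\bc{\sum_i\gamma_A^i n_i}-o\bc{n/\sqrt{c_0}}$; choosing $\delta_0$ small relative to $\gamma_A$ finishes the proof in three lines. You instead re-derive a concentration statement for half-edge counts in $\apg_{t_0}$ from scratch and then transfer to $\wpg_{t_0}$ via Lemma~\ref{lem:contiguity}. That transfer step is fine and not circular (the lemma is proved earlier), but the whole detour is unnecessary: Proposition~\ref{prop:incominghistoryconcentration} already gives the $\wpg_{t_0}$ bound directly.

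Inside your concentration step there is a genuine gap. You condition on the event $\Delta\bc{\wpg}\le\Delta_0$ from~\ref{assump:Delta} and then apply Hoeffding treating each type-$i$ vertex's contribution in $\apg_{t_0}$ as $\Delta_0$-bounded. But~\ref{assump:Delta} is a property of the random graph $\wpg$, not of $\apg_{t_0}$: in Step~\ref{def:apgHalf} of Definition~\ref{def:apg} the degree of each vertex is drawn i.i.d.\ from the unbounded distribution $\degdist_i$, and nothing in the assumptions guarantees $\Delta\bc{\apg}\le\Delta_0$ whp (the assumptions only give $\Delta_0\gg1$ and fast tails, which need not beat a union bound over $\Theta\bc{n}$ vertices at level $\Delta_0$). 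To repair this you should either (a) split as in the proof of Proposition~\ref{prop:unmatched} into the $o\bc{n}$ half-edges at vertices of $\degdist_i$-degree exceeding $d_0$, using~\ref{PP:FandZ}, and then apply Hoeffding with the bound $d_0$, which satisfies $d_0\ll\ln\omega_0\ll\ln n$; or (b) drop Hoeffding altogether and use Chebyshev with the finite second moments from Remark~\ref{rem:finitemoments}, which is all you need since you only require concentration at scale $o\bc{n}$. With either fix your argument closes, but it is worth noting that the paper's proof avoids all of this by leaning on Proposition~\ref{prop:incominghistoryconcentration}.
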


\begin{proof}
Since $\Pr_{\viteratdist{t}}\bc{ \vom} \neq 0$, there certainly exists some $d \in \NN$ and some $A \in \ms{\alphabet^{t_0+1}}{d}$
such that $\vom \in A$ and $\gamma_A>0$.
	Since we chose $\delta_0$ sufficiently small, so in particular $\delta_0^{1/100} < \gamma_A$,
	Proposition~\ref{prop:incominghistoryconcentration} implies that \whp\ there are certainly at least
	$\gamma_{A} n-o \bc{n} \ge \delta_0^{1/100}n$  vertices which receive \inp\ $A$,
	which is clearly sufficient.
\end{proof}

Given a positive real number $d$ and a probability distribution $\cD$ on $\NN_0^k$, we denote by $\conddist{\cD}{\leq d }$ the probability distribution $\cD$ conditioned on the event $\norm{\cD}_1 \leq d$.  
Recall that $P':=\distf^{t_0-1}\bc{\initialdistribution}$, and
recall also from Definition~\ref{def:GenMulti} that $\MSet{\cD}{\vec{q}}$
is a random multiset of messages. With a slight abuse of notation,
we will also use $\MSet{\cD}{\vec{q}}$ to refer to the \emph{distribution} of this random multiset.
\begin{proposition}
	Whenever a standard vertex $v$ is revealed in the marking process from a change of type $\changeboth{1}$, the further changes
	made at outgoing half-edges at $v$ have asymptotically the same distribution as in the branching process $\changebranch\bc{\changeold{1},\changenew{1},P'}$
	below a change of type $\changeboth{1}$.
\end{proposition}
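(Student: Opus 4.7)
The plan is to unpack the conditional structure of $v$ in the alternative model and compare it directly with the first generation of $\changebranch$.

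First, because $v$ is a standard vertex, its \inpt{t_0} is identical in $\wpg_{t_0}$ and $\apg_{t_0}$. Since the one-step WP update determining the outgoing changes at $v$ depends only on the \inps at $v$, I carry out the analysis inside $\apg_{t_0}$. Write $i_v$ and $i_u$ for the types of $v$ and $u$. By Step~(\ref{def:apgHalf}) of Definition~\ref{def:apg} the half-edges at $v$ are generated with types per $\degdist_{i_v}$ and with mutually independent \instories; conditioning on the presence of the marked half-edge of type $\bc{i_v,i_u}$ carrying the change $\changeboth{1}$, the relation between $\degdist_{i_v}$ and $\offdist{i_u}{i_v}$ from Definition~\ref{def:offspringdist} shows that the vector of types of the remaining half-edges at $v$ is distributed as $\offdist{i_u}{i_v}$, and each remaining half-edge of type $\bc{j,i_v}$ still carries an independent \instory drawn from $\Matrdist{\vdistphi{\initialdistribution}{t_0}}{j,i_v}$.

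Next, the outgoing change at $v$ is read off deterministically: for each other half-edge $v\to w'$, one compares $\wpf$ evaluated on the multiset of incoming time-$t_0$ messages with $\changeold{1}$ in the $u$-slot versus with $\changenew{1}$ in the $u$-slot, and records whether the outputs differ. Only the $t_0$-th coordinate of each \instory enters, and by Claim~\ref{claim:localhistorydistribution} this marginal is $\Matrdist{\distphi{\initialdistribution}{t_0}}{j,i_v}$; independence across half-edges is inherited from the construction of $\apg_{t_0}$. On the other side, $\changebranch\bc{\changeold{1},\changenew{1},P'}$ is built on $\cT_{i_v,i_u}$, whose root vertex has children per $\offdist{i_u}{i_v}$ endowed with independent up-messages encoding (approximately) the same marginals, after which the identical rule $\wpf$ is applied to compare outputs under the two root-edge inputs. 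Step for step the two constructions therefore produce the same distribution on the set of changes, up to the mismatch between the distributions of time-$t_0$ messages used on each side.

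Since $P$ is the stable WP limit of $\initialdistribution$, both $P'=\distphi{\initialdistribution}{t_0-1}$ and $\distphi{\initialdistribution}{t_0}=\distf\bc{P'}$ lie within $\dTV$-distance $o\bc{1}$ of $P$ for $t_0$ chosen sufficiently large, which is exactly the sense in which the two distributions agree asymptotically. The main obstacle I expect is making the matching of \emph{joint} laws honest rather than only marginal: one must verify that, once we condition on the \story\ of the marked half-edge, the remaining \instories are still independent and distributed as asserted above (which follows from the cross-independence baked into Step~(\ref{def:apgHalf}) of Definition~\ref{def:apg}), and then check that the residual $o\bc{1}$ total variation gap does not blow up when aggregated over the (bounded) number of half-edges at $v$.
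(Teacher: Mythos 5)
Your proof tracks the paper closely in its opening moves (working in $\apg_{t_0}$, passing from $\degdist_{i_v}$ to $\offdist{i_u}{i_v}$, invoking Claim~\ref{claim:localhistorydistribution}), and you correctly identify the main technical worry as a conditioning issue. But you then dismiss that worry with a wrong argument, and in doing so you skip exactly the step that the paper's proof is built around.

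The gap is in the sentence ``once we condition on the \story\ of the marked half-edge, the remaining \instories\ are still independent and distributed as asserted above (which follows from the cross-independence baked into Step~(\ref{def:apgHalf}) of Definition~\ref{def:apg}).'' Recall that the \emph{\story} along the marked half-edge is the pair (\instory, \outstory). The \instory\ from $u$ is indeed independent of the other \instories\ at $v$, so conditioning on it is harmless. However, the \outstory\ from $v$ to $u$ is, by Step~(\ref{def:apgInstory}) of Definition~\ref{def:apg}, a \emph{deterministic function} of the \instories\ along $v$'s other half-edges. Revealing the partner half-edge at $v$ therefore reveals $\mu_{v\to u}\bc{t_0}$, and this is genuine information about the remaining in-messages. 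The independence guaranteed in Step~(\ref{def:apgHalf}) is only among the raw \instories\ at a fixed vertex; it does not survive conditioning on a quantity built out of those same \instories. So the distribution of the multiset of other in-messages at $v$ is \emph{not} $\Pr\bc{X=A}$ outright but rather $\Pr\bc{X=A\mid \wpf\bc{X}=\xi_0}$, where $\xi_0=\mu_{v\to u}\bc{t_0}$ is the (random, revealed) out-message.

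The paper's proof proceeds precisely by handling this conditioning rather than dismissing it: it observes that $\xi_0$ is distributed (asymptotically) as $\Matrdist{P'}{i,j}$, so the joint probability of observing $A$ is $\Pr\bc{\Matrdist{P'}{i,j}=\wpf\bc{A}}\cdot\Pr\bc{X=A\mid \wpf\bc{X}=\wpf\bc{A}}$, and then uses the fixed-point property of $P$ (equivalently, stability of $P$ and closeness of $P'$ and $\distf\bc{P'}$ to $P$) to argue that $\Pr\bc{\Matrdist{P'}{i,j}=\wpf\bc{A}}\approx\Pr\bc{\wpf\bc{X}=\wpf\bc{A}}$, which collapses the product back to $\approx\Pr\bc{X=A}$, matching the unconditional law used in $\changebranch\bc{\changeold{1},\changenew{1},P'}$. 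Your proof cannot reach the conclusion without this cancellation argument, and the cancellation is only available because $P$ is a fixed point; this is the essential content of the proposition. A secondary remark: the degree bound for a standard vertex is $d_0$, not a constant, so ``bounded number of half-edges at $v$'' is also not quite right, though that is a smaller issue compared to the conditioning step.
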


\begin{proof}		
		First, we note that $v$ is revealed in the marking process from a change of type $\changeboth{1}$
		so the vertex $v$ has type $i:=g_1\bc{\changeold{1}}$ and its parent
		(i.e its immediate predecessor in the branching process $\changebranch\bc{\changeold{1},\changenew{1},P'}$) has type $j:=g_2\bc{\changeold{1}}$.
		Now, given that $v$ is a standard vertex , we may use $\apg_{t_0}$ instead of $\wpg_{t_0}$ to model it.
		Moreover, there are $\offdist{j}{i}\vert_{\leq d_0}$ further half-edges at $v$. By Remark~\ref{rem:finitemoments} and Markov's inequality, the event $\norm{\offdist{j}{i}}_1 \leq d_0$ is a high probability event. Thus,  the  distribution $\offdist{j}{i}\vert_{\leq d_0}$ tends asymptotically
		to the  distribution $\offdist{j}{i}$.
		Furthermore, by Claim~\ref{claim:localhistorydistribution}, each of these further half-edges has a
		\inmessaget{t_0} distributed according to $P'$ independently.
		Since $v$ was a new vertex, these \inmessages\ have not changed,
		and therefore are simply distributed according
		to $\MSet{\offdist{j}{i}}{P'[i]}$, as in $\changebranch\bc{\changeold{0},\changenew{0},P'}$.
		
		Note that in the idealised process $\changebranch\bc{\changeold{0},\changenew{0}, P'}$ we additionally
		condition on these incoming messages producing~$\changeup{0}$, the appropriate message to the parent.
		In this case we do not know the message that $v$ sent to its ``parent'',
		in the marking process. However, this message is distributed as $P'[i,j]$,
		and letting $X$ denote a random variable distributed as  $\MSet{\offdist{j}{i}}{\vec{q}_i}$,
		the probability that the multiset of incoming messages at $v$ is $A$ is simply
		\begin{align*}
		\Pr\bc{P'[i,j]=\wpf\bc{A}} \Pr\bc{X=A \; | \; \wpf\bc{X}=\wpf\bc{A}}.
		\end{align*}
		Since $P'$ is asymptotically close to the stable fixed point $P$, we have that
		$\Pr\bc{P'[i,j]=\wpf \bc{A}}$ is asymptotically close to $\Pr\bc{\wpf\bc{X}=\wpf\bc{A}}$ for each $A$,
		and so the expression above can be approximated simply by $\Pr\bc{\cbc{X=A} \cap \cbc{\wpf\bc{X}=\wpf\bc{A}}}$ \ $= \Pr\bc{X=A}$, as required.
\end{proof}

\subsection{Three stopping conditions}

In order to prove Lemma~\ref{lem:fewmarked}, we introduce some stopping conditions on the marking process.
More precisely, we will run the marking process until one of the following three conditions is satisfied.

\begin{enumerate}
	\item \emph{Exhaustion} - the process has finished.
	\item \emph{Expansion} - there exists some $\changeboth{1} = \bc{\changeold{1},\changenew{1}} \in \alphabet^2$
	such that at least $\delta_0^{3/5}\alpha_{\changeboth{1}}n$ messages
	have changed from $\changeold{1}$ to $\changenew{1}$ (where $\vec \alpha$ is the vector from Corollary~\ref{cor:subeigenvector2}).
	\item \emph{Explosion} - the number of \shrapnel\ edges is at least $\delta_0^{2/3}n$.
\end{enumerate}

Lemma~\ref{lem:fewmarked} will follow if we can show that \whp\ neither expansion nor explosion occurs.

\subsubsection{Explosion}

\begin{proposition}\label{prop:noexplosion}
	\Whp\ explosion does not occur.
\end{proposition}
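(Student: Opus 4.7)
The plan is to decompose the total shrapnel according to the type of mine producing it -- faulty from freak vertices, erroneous from error vertices, and defective from landmine vertices -- and to show that each piece is smaller than $\delta_0^{2/3}n/3$ w.h.p.

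For the faulty shrapnel, Proposition~\ref{prop:incominghistoryconcentration}(2) states that w.h.p.\ $\apg_{t_0}$ contains at most $n/c_0$ vertices of degree at least $d_0$. Each contributes at most $\Delta_0$ shrapnel half-edges, so the total faulty shrapnel is at most $\Delta_0 n/c_0 = o(n)$ using $\Delta_0 \ll c_0$ from \ref{PP:choicec0}. For erroneous shrapnel, Corollary~\ref{cor:similarhistories} states that w.h.p.\ the number of vertices with differing \incompilations\ in $\apg_{t_0}$ and $\wpg_{t_0}$ is at most $n/a_0$; each such vertex has degree at most $d_0$ (else it would be a freak), so the erroneous shrapnel is at most $d_0 n/a_0 = o(n)$ by the choice of $a_0 = \sqrt{c_0}/(4 d_0 |\alphabet|^{(t_0+2)d_0})$ together with \ref{PP:choicec0}.

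The interesting case is defective shrapnel $D$. Up to the stopping time, since neither exhaustion nor expansion has triggered, the total number $M$ of marks satisfies $M < \delta_0^{3/5} n\,\|\vec\alpha\|_1 + \delta_0^{2/3} n \le 2\delta_0^{3/5} n$. Since every seen vertex corresponds to at least one revelation, $|\mathrm{Seen}(t)| \le M \le 2\delta_0^{3/5}n$. I will analyse the marking process by sequentially revealing the random matching of $\apg_{t_0}$: at each revelation step, conditional on the history, the partner is uniform on the remaining compatible half-edges, of which there are $N = \Omega(n)$ by Claim~\ref{claim:manyhistories}. The total number of unmatched half-edges at already-seen vertices is $\sum_{v\in\mathrm{Seen}} \deg(v)$, which in expectation is bounded by $|\mathrm{Seen}|$ times the size-biased mean $\Erw[\|\degdist\|_1^2]/\Erw[\|\degdist\|_1] = O(1)$, finite by Remark~\ref{rem:finitemoments}. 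Hence the per-revelation probability of hitting a seen vertex is $O(M/n) = O(\delta_0^{3/5})$. Moreover, conditional on a revelation being a landmine hit, the expected degree of the revealed vertex is again $O(1)$ by the same size-biased bounded moment argument, so the expected new shrapnel contributed per revelation is $O(\delta_0^{3/5}) \cdot O(1)$, giving $\Erw[D] = O(M \delta_0^{3/5}) = O(\delta_0^{6/5} n)$.

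To upgrade this expectation bound to a w.h.p.\ statement, I will apply Azuma--Hoeffding to the Doob martingale of $D$ with respect to the sequence of revelations: each revelation changes $D$ by at most $\Delta_0$ deterministically, and there are at most $M = O(n)$ revelations, giving a concentration scale of $O(\Delta_0\sqrt{n})$. Since $\Delta_0 \ll n^{1/10}$ by~\eqref{eq:choiceDelta0}, this fluctuation is $o(n)$, so w.h.p. $D \le \Erw[D] + o(n) = O(\delta_0^{6/5} n)$, which is strictly less than $\delta_0^{2/3} n/3$ for the chosen small $\delta_0$ since $6/5 > 2/3$. Combining the three bounds yields total shrapnel below $\delta_0^{2/3} n$ w.h.p., so explosion does not occur. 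The main obstacle is the defective bound, where the naive deterministic estimate using $\Delta_0$ shrapnel per landmine is too crude; the essential observation is that the size-biased expected degree of a revealed vertex is bounded, which allows one to replace $\Delta_0$ with a constant in expectation and makes Azuma applicable.
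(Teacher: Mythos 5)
Your decomposition into the three mine types is the same as the paper's (Claims~\ref{claim:rubblebound}, \ref{claim:spraybound}, \ref{claim:falloutbound}). For freak and error vertices, you take a cleaner deterministic route: bound the defective/erroneous shrapnel by the total number of half-edges at all potential freak/error vertices using Proposition~\ref{prop:incominghistoryconcentration}(2) and Corollary~\ref{cor:similarhistories}. The paper instead bounds the per-step hitting probability and applies Markov; your bound is coarser in structure but numerically sharper and equally valid here.

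The duplicate case is where you diverge from the paper, and this is where there is a genuine gap. Two claims in your Azuma argument do not hold up.

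First, the assertion that ``each revelation changes $D$ by at most $\Delta_0$ deterministically'' is the bounded-differences condition needed for the Doob martingale $\Erw[D\mid\cF_r]$. It is \emph{not} obviously true, because changing one revelation's partner can cascade: a different partner means a different vertex is marked, a different set of subsequent half-edges is explored, different snags are hit, and the entire future trajectory of the marking process can shift. The direct effect of one revelation is $\le \Delta_0$, but the downstream effect on $\Erw[D\mid\cF_r]$ is not controlled by $\Delta_0$ alone.

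Second, your estimate that the ``per-revelation probability of hitting a seen vertex is $O(M/n) = O(\delta_0^{3/5})$'' is derived from the \emph{expected} seen-degree sum being $O(M)$ by a size-bias argument. That is fine for computing the unconditional $\Erw[D]$, but the conditional hitting probability at step $r$ depends on the \emph{realised} $\sum_{v\in\mathrm{Seen}}\deg(v)$, which is not deterministically $O(M)$: seen standard vertices can have degree up to $d_0\to\infty$, so the realised sum can be as large as $M d_0 \gg \delta_0^{3/5}n$. Consequently, even a ``martingale plus predictable drift'' decomposition of $D=\sum_r Z_rY_r$ would not yield a deterministic $O(\delta_0^{3/5})$ drift increment, which is what you would need.

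The paper avoids both problems by working per vertex: for a fixed vertex $v$ of degree $d$, the probability that $v$ is chosen as a partner at any one step is bounded deterministically by $d/(\delta_0^{1/100}n)$ (Claim~\ref{claim:manyhistories} gives the deterministic denominator bound), so the probability that $v$ is hit twice over $\le\sqrt{\delta_0}n$ steps is $\le \delta_0^{49/50}d^2$. Summing against bounded moments of the degree distribution yields $\Erw[S]\le\delta_0^{4/5}n$, and a direct second-moment bound $\Erw[S^2]\le\delta_0^{9/5}n^2$ together with Chebyshev gives concentration without any bounded-differences hypothesis. If you want to repair your route, you would need either a worst-case (not expected) bound on $\sum_{v\in\mathrm{Seen}}\deg(v)$ along the process, or to abandon Azuma in favour of the second-moment method.
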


We will split the proof up into three claims, dealing with the three different types of \shrapnel\ edges.

\begin{claim}\label{claim:rubblebound}
	\Whp, the number of \rubble\ edges is at most $\delta_0^{2/3}n/2$.
\end{claim}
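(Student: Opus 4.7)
The plan is to bound the number of \rubble\ edges via a birthday-paradox-style argument applied to the matching of half-edges in $\apg_{t_0}$, combined with a concentration step.

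First, I would observe that under the hypotheses (neither expansion nor explosion having occurred yet), the total number of marked half-edges at any moment in the marking process is at most $M := C\delta_0^{3/5}n$ for some constant~$C$: the expansion threshold bounds the change-marked half-edges by $\norm{\vec\alpha}_1 \delta_0^{3/5} n = \delta_0^{3/5} n$, while the explosion threshold bounds the \shrapnel\ half-edges by $\delta_0^{2/3}n \le \delta_0^{3/5} n$ (since $\delta_0<1$). By Lemma~\ref{lem:contiguity} together with Proposition~\ref{prop:unmatched}, we may carry out the analysis in $\apg_{t_0}$ up to an $o\bc{n}$ additive error; crucially, in $\apg_{t_0}$ the matching of half-edges is (conditionally on the \stories) uniform over consistent matchings, which is what enables the birthday calculation.

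The key combinatorial observation is that each \landmine\ vertex~$v$ contributes at most $d_v$ \rubble\ edges, and for $v$ to be a \landmine\ vertex at least two of its half-edges must be matched to (ever) marked half-edges. Approximating the matching by a uniform matching on consistent half-edges, each individual half-edge at~$v$ is matched to a marked half-edge with probability $O\bc{M/N}$, where $N=\Theta\bc{n}$ is the total number of half-edges; a union bound over the $\binom{d_v}{2}$ pairs at~$v$ yields $\Pr\bc{v \text{ is \landmine}} = O\bc{d_v^2 \bc{M/N}^2}$. Summing,
\begin{align*}
\Erw\bc{\text{number of \rubble\ edges}} \le \sum_v d_v \cdot O\bc{d_v^2\bc{M/N}^2} = O\bc{\bc{M/N}^2}\sum_v d_v^3.
\end{align*}
By Remark~\ref{rem:finitemoments} the distribution $\norm{\degdist_i}_1$ has finite third moment, and combined with $\Delta\bc{\wpg}\le\Delta_0$ \whp\ and a Hoeffding-type concentration on the contributions $d_v^3 \le \Delta_0^3$, this yields $\sum_v d_v^3 = O\bc{n}$ \whp. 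Hence the expected number of \rubble\ edges is $O\bc{M^2/n} = O\bc{\delta_0^{6/5} n}$, which for $\delta_0$ sufficiently small is only a tiny fraction of $\delta_0^{2/3} n / 2$ since $6/5 > 2/3$.

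The hard part will be the final concentration step: a naive application of Markov's inequality gives only a constant failure probability, whereas the \whp\ statement requires $o\bc{1}$. I would address this by bounding the second moment of the number of \rubble\ edges via an analogous double-birthday argument (whose dominant contribution is controlled by $\sum_v d_v^6$-type terms, bounded via higher-moment finiteness of the degree distribution and concentration of the empirical moments) and invoking Chebyshev's inequality; alternatively, since the only randomness after conditioning on the \stories\ lies in the uniform choice of matching, one could apply a switching-based concentration inequality for uniform matchings to obtain the desired high-probability bound directly.
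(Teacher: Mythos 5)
Your proposal matches the paper's argument in all essentials: a birthday-style estimate on the probability that a vertex is hit twice to bound the first moment, followed by a second-moment computation and Chebyshev (the paper likewise notes that Markov alone gives only a constant failure probability). The paper makes the per-half-edge matching probability precise by revealing partners sequentially and invoking Claim~\ref{claim:manyhistories} at each of the at most $\sqrt{\delta_0}n$ steps (rather than your $M/N$ heuristic for a uniform matching), but this is a presentational rather than conceptual difference, and your exponents give comparable, indeed slightly sharper, margins.
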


	\begin{proof}
		A type-$i$ vertex $v$ of degree  $d$ will contribute $d$ \rubble\ edges
		if it is chosen at least twice as the partner of a marked half-edge. Using Claim~\ref{claim:manyhistories}, at each step there
		are at least $\delta_0^{1/100}n$  possible half-edges to choose from, of which certainly at most $d$ are incident to $v$,
		and thus the probability that $v$ is chosen twice in the at most $\sqrt{\delta_0} n$ steps is at most
		$$
		\bc{\frac{d}{\delta_0^{1/100}n}}^2 \bc{\sqrt{\delta_0}n}^2 = \delta_0^{49/50} d^2.
		$$
		Thus setting $S$ to be the number of \rubble\ edges and $c:=\max_{i \in [k]}\Erw\bc{ \norm{\degdist_i}_1^3 }$, we have
		\begin{align*}
		\Erw\bc{S} \le \sum_{i=1}^k \sum_{d=0}^\infty d \bc{\Pr\bc{\norm{\degdist_i}_1=d} n_i } \delta_0^{49/50} d^2 
		& = \delta_0^{49/50} \sum_{i=1}^k n_i  \sum_{d=0}^\infty d^3 \Pr\bc{\norm{\degdist_i}_1=d} \\
	 	&\le \delta_0^{49/50} \cdot\delta_0^{-1/100}n \cdot c  \le \delta_0^{4/5} n.
		\end{align*}
		On the other hand, if two distinct vertices have degrees~$d_1$ and~$d_2$, then the probability that both become \mines\ 
		may be estimated according to whether or not they are adjacent to each other, and is at most
		$$
		\frac{d_1d_2}{\delta_0^{1/100}n} \cdot \frac{d_1d_2}{\bc{\delta_0^{1/100}n}^3}\bc{\sqrt{\delta_0}n}^3 + \frac{d_1^2 d_2^2}{\bc{\delta_0^{1/100}n}^4}\bc{\sqrt{\delta_0} n}^4
		\le 2d_1^2 d_2^2 \delta_0^{24/25}.
		$$
		Therefore we have
		\begin{align*}
		\Erw\bc{S^2} & \le \Erw\bc{S} + \sum_{i,j,\ell,m \in [k]} \sum_{d_1,d_2=0}^\infty d_1d_2 \Pr\bc{\norm{\offdist{j}{i}}_1=d_1} n_i \cdot \Pr\bc{\norm{\offdist{\ell}{m}}_1=d_2} n_m
		\cdot  2d_1^2 d_2^2\delta_0^{49/25}\\
		& \le \delta_0^{4/5}n + 2\delta_0^{49/25} \max_{i,j\in [k]}\bc{\Erw\bc{\norm{\offdist{j}{i}}_1^3}}^2  \bc{\delta_0^{-1/100} n}^2 \\
		& \le \delta_0^{4/5}n + \delta_0^{48/25} n^2 \max_{i,j\in [k]}\bc{\Erw\bc{\norm{\offdist{j}{i}}_1^3}}^2
		\le \delta_0^{9/5}n^2,
		\end{align*}
		where the last line follows due to Remark~\ref{rem:finitemoments} for sufficiently small $\delta_0$.
		Finally, Chebyshev's inequality shows that \whp\ the number of \shrapnel\  is at most $\delta_0^{2/3} n/2$, as claimed.
\end{proof}

Recall that $a_0 := \frac{\sqrt{c_0}}{4d_0\abs{\alphabet}^{\bc{t_0+2}d_0}}$.
\begin{claim}\label{claim:spraybound}
	\Whp\ the number of \spray\ edges is at most $\frac{d_0 n}{\sqrt{a_0}}$. 
\end{claim}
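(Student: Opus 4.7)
The plan is to derive the bound almost immediately from Corollary~\ref{cor:similarhistories}, exploiting the degree restriction built into the definition of an \seamine\ vertex. First I would note that every \spray\ edge is, by construction of the marking process, one of the half-edges at a vertex that was hit as an \seamine. Since by definition an \seamine\ vertex has degree at most $d_0$, each such vertex contributes at most $d_0$ \spray\ edges, and hence the total number of \spray\ edges is bounded by $d_0$ times the number of distinct \seamine\ vertices encountered, which in turn is at most $d_0$ times the total number of \seamine\ vertices in $\wpg$.

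Next, I would invoke the definition of an \seamine\ vertex: a vertex (of degree at most $d_0$) whose \inps---equivalently, whose \incompilation---differ between $\apg_{t_0}$ and $\wpg_{t_0}$. Thus every \seamine\ vertex is one on which the \incompilations\ of the two models disagree, so Corollary~\ref{cor:similarhistories} provides, after the appropriate re-ordering of vertices, a \whp\ upper bound of $n/a_0$ on their number. Combining these two observations gives that \whp\ the number of \spray\ edges is at most $d_0 n/a_0$.

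Finally, I would check that $d_0 n/a_0 \le d_0 n/\sqrt{a_0}$ for sufficiently large $n$, which reduces to $a_0 \ge 1$ eventually. Since $a_0 = \sqrt{c_0}/(4 d_0 |\alphabet|^{(t_0+2)d_0})$, property~\ref{PP:choicec0} (via~\eqref{eq:secondchoicec0}, which in particular gives $d_0 |\alphabet|^{(t_0+2)d_0} = o(\sqrt{c_0})$) immediately yields $a_0 \to \infty$, so this is clear. There is no substantive obstacle: the hard work was already done in Corollary~\ref{cor:similarhistories}, and what remains is routine bookkeeping that uses the degree bound hard-coded into the definition of \seamine.
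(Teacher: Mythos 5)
Your argument is correct, and it takes a genuinely different route from the paper's. The paper's proof is probabilistic and dynamic: it bounds the probability of hitting an \seamine\ in any single step of the marking process (using Claim~\ref{claim:manyhistories} to ensure there are at least $\delta_0^{1/100}n$ candidate half-edges at each step), multiplies by the number of steps before a stopping condition fires and by the $d_0$ edges produced per hit, and then applies Markov's inequality. Your proof is instead a static, deterministic counting argument: the set of vertices that could \emph{ever} be classified as \seamine\ vertices is fixed once $\wpg_{t_0}$ and $\apg_{t_0}$ are revealed, each such vertex can be hit as an \seamine\ at most once (it must be ``new''), and Corollary~\ref{cor:similarhistories} bounds the size of this set directly \whp. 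This is cleaner, avoids Claim~\ref{claim:manyhistories} entirely, and in fact yields the stronger bound $d_0 n/a_0$. The paper most likely adopts the probabilistic framing for uniformity with Claim~\ref{claim:rubblebound}, where a static argument is unavailable because \emph{any} vertex is a potential \landmine. One minor correction to your final step: \eqref{eq:secondchoicec0} gives $d_0\abs{\alphabet}^{2(t_0+2)d_0}=o(c_0)$, which upon taking square roots yields only $\sqrt{d_0}\,\abs{\alphabet}^{(t_0+2)d_0}=o(\sqrt{c_0})$, not the stated $d_0\abs{\alphabet}^{(t_0+2)d_0}=o(\sqrt{c_0})$. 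It is cleaner to invoke~\ref{PP:choicec0} directly with a sufficiently large constant $C$, as the paper itself does, giving $\sqrt{c_0}\gg d_0^3\abs{\alphabet}^{(t_0+2)d_0}$ and hence $a_0\gg d_0^2\to\infty$; your conclusion is unaffected.
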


\begin{proof}
	Observe that Corollary~\ref{cor:similarhistories} implies in particular
	that the number of edges of $\wpg_{t_0}$ which are attached to vertices of degree at most $d_0$ where the incoming message
	histories differ from those in $\apg_{t_0}$ (i.e.\ which would lead us to an \seamine\ vertex if chosen) is at most
	$
	d_0 \frac{n}{a_0}, 
	$
	and therefore the probability that we hit an \seamine\ in any one step is at most
	$
	\frac{d_0 n/a_0}{\delta_0^{1/100}n} = \frac{1}{\delta_0^{1/100} \bc{a_0/d_0}}.
	$
	Furthermore, any time we meet an \seamine\,
	we obtain at most $d_0$ \spray\ edges, and since the marking process proceeds for at most $\delta_0^{3/5}n$ steps,
	therefore the expected number of \spray\ edges in total is at most
$$
\delta_0^{3/5}n \cdot \frac{d_0}{\delta_0^{1/100} \bc{a_0/d_0}}= \delta_0^{59/100}n \cdot \frac{d_0^2}{a_0}.
$$
Now, by \eqref{PP:choicec0}, we have $c_0 \gg \exp\bc{Cd_0} \gg d_0^6 \abs{\alphabet}^{2 \bc{t_0+2} d_0}$
so $\sqrt{ c_0} \gg d_0^3 \abs{\alphabet}^{ \bc{t_0+2} d_0}$ which implies that $a_0 \gg d_0^2$.
Thus, application of Markov's inequality completes the proof.
\end{proof}

\begin{claim}\label{claim:falloutbound}
	\Whp the number of \fallout\ edges is at most $\Delta_0 \frac{n}{\sqrt{c_0}}$.
\end{claim}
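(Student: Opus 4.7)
The plan is to follow the template of Claim~\ref{claim:spraybound}, substituting the global maximum degree $\Delta_0$ for the cap $d_0$ and the high-probability bound from Proposition~\ref{prop:incominghistoryconcentration} on the count of high-degree vertices for the bound from Corollary~\ref{cor:similarhistories}. First, I would condition on two high-probability events: by~\ref{assump:Delta}, the maximum degree of $\wpg_{t_0}$ is at most $\Delta_0$, and by the second part of Proposition~\ref{prop:incominghistoryconcentration}, at most $n/c_0$ vertices of $\wpg_{t_0}$ have degree exceeding $d_0$. Consequently the total number of half-edges incident to a \nuclearmine\ vertex (i.e.\ a vertex whose discovery in the marking process would create \fallout\ edges) is at most $\Delta_0 \cdot n/c_0$.

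Next, I would estimate the probability of hitting a \nuclearmine\ at any single step of the marking process. By Claim~\ref{claim:manyhistories}, whenever we reveal the partner of a marked outgoing half-edge we are picking uniformly from at least $\delta_0^{1/100}n$ compatible candidates, so the probability that the chosen partner lies at a \nuclearmine\ vertex is at most
$$
\frac{\Delta_0 n/c_0}{\delta_0^{1/100}n} \;=\; \frac{\Delta_0}{\delta_0^{1/100}c_0}.
$$
Each such hit contributes at most $\Delta_0$ \fallout\ edges, since all half-edges at the vertex become \shrapnel. As in Claim~\ref{claim:spraybound} the marking process proceeds for at most $\delta_0^{3/5}n$ steps before a stopping condition is triggered, so linearity of expectation gives that the expected number of \fallout\ edges is at most
$$
\delta_0^{3/5} n \cdot \frac{\Delta_0}{\delta_0^{1/100}c_0} \cdot \Delta_0 \;=\; \delta_0^{59/100}\,\frac{\Delta_0^2 n}{c_0}.
$$

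Finally, by~\ref{PP:choicec0} we have $\Delta_0^2 \ll c_0$, so $\Delta_0 \ll \sqrt{c_0}$, and combined with $\delta_0^{59/100}=O(1)$ this yields that the above expectation is $o\bc{\Delta_0 n/\sqrt{c_0}}$. An application of Markov's inequality then gives the desired high-probability bound. I do not foresee a serious obstacle: the proof is structurally identical to Claim~\ref{claim:spraybound}, and the only point to verify is that the asymptotic slackness $\Delta_0 \ll \sqrt{c_0}$ supplied by~\ref{PP:choicec0} is sufficient to absorb the extra factor of $\Delta_0$ that arises because each \nuclearmine\ hit may generate up to $\Delta_0$ (rather than $d_0$) \fallout\ edges, which it comfortably is.
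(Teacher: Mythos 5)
Your proof is correct and follows essentially the same route as the paper's: condition on the maximum degree bound from~\ref{assump:Delta} and the high-degree vertex count from Proposition~\ref{prop:incominghistoryconcentration}, bound the per-step hit probability via Claim~\ref{claim:manyhistories}, multiply by $\Delta_0$ \fallout\ edges per hit over $\delta_0^{3/5}n$ steps, and finish with $\Delta_0^2 \ll c_0$ from~\ref{PP:choicec0} and Markov. If anything you are slightly more careful than the paper in explicitly carrying the $\delta_0^{-1/100}$ factor through the per-step probability estimate.
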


\begin{proof}
	This is similar to the proof of Claim~\ref{claim:spraybound}.
	By assumption~\ref{assump:Delta}, \whp\ there are no vertices of degree larger than $\Delta_0$.
	Moreover, by Proposition~\ref{prop:incominghistoryconcentration},
	\whp\ the number of edges adjacent to vertices of degree at least $d_0$ is at most
	$n/c_0$, so the probability of hitting a \nuclearmine\ is at most $\frac{\Delta_0}{c_0}$.
	If we hit a \nuclearmine, 
	at most $\Delta_0$ half-edges become \fallout,
	therefore the expected number of \fallout\ edges is  $\delta_0^{3/5}n \cdot O\bc{\Delta_0 \cdot \frac{\Delta_0}{c_0}} = O\bc{\frac{\Delta_0^2 n}{c_0}}$.
By~\ref{PP:choicec0} we have $c_0 \gg \Delta^2$ so an application of Markov's inequality completes the proof.
\end{proof}

Combining all three cases we can prove Proposition~\ref{prop:noexplosion}.

\begin{proof}[Proof of Proposition~\ref{prop:noexplosion}]
	By Claims~\ref{claim:rubblebound},~\ref{claim:spraybound} and~\ref{claim:falloutbound},
	\whp\ the total number of \shrapnel\ edges is at most
	$$ 
		\frac{\delta_0^{2/3}n}{2} + \frac{d_0  n}{\sqrt{a_0}} + \frac{\Delta_0n}{\sqrt{c_0}} 
		$$
	Again, by \eqref{PP:choicec0}, we have $c_0 \gg \exp\bc{Cd_0} \gg d_0^6 \abs{\alphabet}^{2\bc{t_0+2} d_0}$ and $c_0 \gg \Delta_0^2$. Thus, we have  $\sqrt{a_0} \gg d_0$  and $\sqrt{c_0} \gg \Delta_0$. Hence, 
		$$ 
		\frac{\delta_0^{2/3}n}{2} + \frac{d_0 n}{ \sqrt{a_0} } + \frac{ \Delta_0 n}{ \sqrt{c_0}} \le \delta_0^{2/3} n
		$$
		as required.	
\end{proof}

\subsubsection{Expansion}

\begin{proposition}\label{prop:noexpansion}
	\Whp\ expansion does not occur.
\end{proposition}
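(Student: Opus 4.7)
The plan is to bound the expected total number of marks of each change type by the subcritical branching approximation $\changebranch_1$, then apply Markov's inequality and a union bound over the $O(1)$ possible types. Throughout we condition on the high-probability event from Proposition~\ref{prop:noexplosion} that at most $\delta_0^{2/3}n$ shrapnel edges arise.

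First I would control the number of initial marks, namely directed edges whose message differs between $\WP^{t_0}$ and $\WP^{t_0+1}$. By Lemma~\ref{lem:contiguity} this can be estimated in $\apg_{t_0+1}$, where the history along a type-$(i,j)$ edge is drawn from $\Matrdist{\vdistphi{\initialdistribution}{t_0+1}}{i,j}$. A change at the last step at the root edge requires at least one change among the incoming messages at the previous step, so the vector of such change probabilities evolves under essentially the same mean matrix $T_1$ that governs $\changebranch_1$; iterating this recursion and invoking subcriticality via Corollary~\ref{cor:subeigenvector2} forces the change probability along a fixed edge to decay geometrically in $t_0$. Choosing $t_0$ large enough therefore brings the expected number of initial marks below $\delta_0^{2/3}n$, and a Chebyshev bound exploiting the finite second moments of $\offdist{j}{i}$ from Remark~\ref{rem:finitemoments} upgrades this to a \whp\ bound.

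Next I would exploit Corollary~\ref{cor:subeigenvector2} at the propagation stage. Iterating $T_1\vec\alpha \le (1-\gamma)\vec\alpha$ and summing the resulting geometric series gives $(I-T_1)^{-1}\vec\alpha \le \vec\alpha/\gamma$ pointwise; since $(I-T_1)^{-1}=\sum_{s\ge 0}T_1^s$ has non-negative entries, every vector $\vec b \le c\vec\alpha$ satisfies $(I-T_1)^{-1}\vec b \le (c/\gamma)\vec\alpha$. Let $\vec N$ be the vector whose $\changeboth{1}$-entry is the total number of marks of type $\changeboth{1}$ produced in the marking process. At every standard vertex the outgoing marks follow the $\changebranch_1$ offspring law, so $\Erw\bc{\vec N}$ satisfies the coordinatewise inequality $\Erw\bc{\vec N} \le \vec x_0 + \vec Y + T_1\Erw\bc{\vec N}$, where $\vec x_0$ records the initial marks and $\vec Y$ absorbs the shrapnel (each mine contributes at most $\Delta_0$ additional marks). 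Setting $\alpha_{\min}:=\min_{\changeboth{1}}\alpha_{\changeboth{1}}>0$, the preceding estimates give $\vec x_0+\vec Y \le (\delta_0^{2/3}n/\alpha_{\min})\vec\alpha$ coordinatewise, whence $\Erw\bc{N_{\changeboth{1}}} = O\bc{\delta_0^{2/3}/(\gamma\alpha_{\min})}\alpha_{\changeboth{1}} n$.

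Markov's inequality then gives $\Pr\bc{N_{\changeboth{1}} \ge \delta_0^{3/5}\alpha_{\changeboth{1}} n} = O\bc{\delta_0^{1/15}/(\gamma\alpha_{\min})} = o(1)$ since $\gamma$ and $\alpha_{\min}$ are fixed positive constants independent of $\delta_0$, and a union bound over the $\abs{\alphabet}^2=O(1)$ change types rules out expansion \whp. The hard part will be to justify the expected recursion $\Erw\bc{\vec N} \le \vec x_0 + \vec Y + T_1\Erw\bc{\vec N}$ with full rigour: the marking process is not literally a multitype branching process, since the random graph structure induces correlations between successive processings and mine vertices remove half-edges from the pool of possible partners. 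These deviations ought to contribute only lower-order terms, comfortably absorbed by the gap between the $\delta_0^{2/3}$ input and the $\delta_0^{3/5}$ threshold in the relevant exponents.
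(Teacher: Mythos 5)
Your approach is genuinely different from the paper's, and it contains a gap that is fatal as written. The paper does not attempt to bound the total expected number of marks via a Neumann series $(I - T_1)^{-1}$; instead it runs a bootstrap/trapping argument that leans on the expansion stopping condition itself. Concretely: if expansion were about to trigger, then up to that moment only $\delta_0^{3/5}\alpha_{\changeboth{2}}n$ changes of each type $\changeboth{2}$ have had their children revealed, so the expected number of children of type $\changeboth{1}$ is bounded by $(T\vec\alpha)_{\changeboth{1}}\delta_0^{3/5}n \le (1-\gamma)\alpha_{\changeboth{1}}\delta_0^{3/5}n$; a second-moment concentration step then shows the actual count is \whp\ at most $(1-\gamma/2)\alpha_{\changeboth{1}}\delta_0^{3/5}n$, which together with the small initial-mark and shrapnel contributions falls strictly short of the threshold $\delta_0^{3/5}\alpha_{\changeboth{1}}n$ — a contradiction. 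This pins the number of parents a priori and sidesteps the issue you acknowledge, that the marking process is not literally a multitype branching process.

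The gap in your argument is the final step. You compute $\Erw\bc{N_{\changeboth{1}}} = O\bc{\delta_0^{2/3}/(\gamma\alpha_{\min})}\alpha_{\changeboth{1}}n$ and then apply Markov to conclude $\Pr\bc{N_{\changeboth{1}} \ge \delta_0^{3/5}\alpha_{\changeboth{1}}n} = O\bc{\delta_0^{1/15}/(\gamma\alpha_{\min})}$. But $\delta_0$, $\gamma$ and $\alpha_{\min}$ are all fixed constants independent of $n$, so this bound is a fixed positive constant, not $o\bc{1}$ as $n\to\infty$. The claim ``$=o(1)$'' is false, and so you cannot conclude that expansion fails to occur \whp. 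This is exactly why the paper moves past expectation bounds and uses a concentration (variance/Chernoff) argument, with fluctuations of order $O\bc{n^{3/4}}$ that are $o\bc{n}$, to win with probability $1-o\bc{1}$. Separately, the recursion $\Erw\bc{\vec N} \le \vec x_0 + \vec Y + T_1\Erw\bc{\vec N}$ deserves a genuine justification rather than a remark that correlations ``ought to contribute only lower-order terms'': the stopping condition is what makes the offspring law trustworthy and the parent count controllable, and if you do not exploit it, the dependence on already-revealed half-edges and the removal of matched partners from the pool are precisely the obstruction you would need to handle. Replacing Markov by a second-moment bound (conditioned on the stopping events, as the paper does) and explicitly invoking the stopping threshold to bound the number of processed parents would close both gaps and essentially transform your argument into the paper's.
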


\begin{proof}
	By Proposition~\ref{prop:noexplosion}, we may assume that explosion does not occur, so we have few \shrapnel\ edges.
	Therefore in order to achieve expansion, at least $\frac{2}{3}\sqrt{\delta_0}n$ edges would have to be marked
	in the normal way, i.e.\ by being generated as part of a $\changebranch$ branching process rather than as one of the $\delta_0 n$ initial
	half-edges or as a result of hitting a \mine.

	However, we certainly reveal children in $\changebranch$
	of at most $\delta_0^{3/5}\alpha_{\changeboth{2}} n$ 
	changes from $\changeold{2}$ to $\changenew{2}$, for each choice of $\changeboth{2}=\bc{\changeold{2},\changenew{2}} \in \alphabet^2$,
	since at this point the expansion stopping condition would be applied.
	Thus the expected number of changes from $\changeold{1}$ to $\changenew{1}$ is at most
	$$
	\sum_{\changeboth{2} \in \alphabet} \delta_0^{3/5} \alpha_{\changeboth{2}} n T_{\changeboth{1},\changeboth{2}}
	= \bc{T\alpha}_{\changeboth{1}} \delta_0^{3/5} n \le \bc{1-\gamma} \alpha_{\changeboth{1}} \delta_0^{3/5} n.
	$$
	We now aim to show that \whp\ the actual number of changes is not much larger than this (upper bound on the) expectation,
	for which we use a second moment argument.
	Let us fix some $\changeboth{2} \in \alphabet^2$.
	For simplicity, we will assume for an upper bound that we reveal precisely $s:=\delta_0^{3/5} \alpha_{\changeboth{2}}n$ changes of type $\changeboth{2}$
	in $\changebranch$.
	Then the number of changes of type $\changeboth{1}$ that arise from these
	is the sum of $s$ independent and identically distributed integer-valued random variables
	$X_1,\ldots,X_s$, where for each $r\in [s]$ we have
	$\Erw\bc{X_r} =T_{\changeboth{1},\changeboth{2}}$
	and $\Erw\bc{X_r^2} \le c:=\max_{i,j\in [k]}\Erw\bc{\norm{\offdist{j}{i}}_1^2} $.
	Therefore we have $\Var\bc{X_r} \le c^2 = O\bc{1}$, and the central limit theorem tells us that
	$\Var\bc{\sum_{r=1}^s X_r} = O\bc{\sqrt{s}}$.
	Then the Chernoff bound implies that \whp
	$$
	\abs{\sum_{r=1}^s X_r - \Erw\bc{\sum_{r=1}^s X_r}} \le n^{1/4} O\bc{\sqrt{s}}
	= O\bc{n^{3/4}} \le \frac{\gamma}{2} \delta_0^{3/5}T_{\changeboth{1},\changeboth{2}} \alpha_{\changeboth{2}}n.
	$$
	Taking a union bound over all $\abs{\alphabet}^4$ choices of $\changeboth{1},\changeboth{2}$, we deduce that \whp\ the total
	number of changes of type $\changeboth{1}$ is at most
	$$
	\bc{1-\gamma} \alpha_{\changeboth{1}} \delta_0^{3/5} n + \sum_{\changeboth{2}}\frac{\gamma}{2} \delta_0^{3/5}T_{\changeboth{1},\changeboth{2}} \alpha_{\changeboth{2}}n
	=\bc{1-\gamma/2} \alpha_{\changeboth{1}} \delta_0^{3/5} n
	$$
	for any choice of $\changeboth{1}$, as required.
\end{proof}

\subsubsection{Exhaustion}

\begin{proof}[Proof of Lemma~\ref{lem:fewmarked}]
	By Propositions~\ref{prop:noexplosion} and~\ref{prop:noexpansion}, neither explosion nor expansion occurs.
	Thus the process finishes with exhaustion, and (using the fact that $\|\vec\alpha\|_1=1$) the total number of edges marked is at most
	$$
	\sum_{\changeboth{1}\in \alphabet^2} \delta_0^{3/5} \alpha_{\changeboth{1}}n + \delta_0^{2/3}n = \bc{\delta_0^{3/5}+\delta_0^{2/3}}n \le \sqrt{\delta_0}n
	$$
	as required.
\end{proof}

\subsection{Proof of Theorem~\ref{thm:main}}

We can now complete the proof of our main theorem.

\begin{proof}[Proof of Theorem~\ref{thm:main}]
	Recall from Proposition~\ref{prop:changescontainment} that edges on which
	messages change when moving from
	$\WP^{t_0}\bc{\wpg_0}$ to $\WP^{*}\bc{\wpg_0}$, which are simply those in the set $\WPchanges$,
	are contained in $\markchanges$.
	
	Furthermore, Lemma~\ref{lem:fewmarked} states that $\abs{\markchanges} \le \sqrt{\delta_0}n$.
	Since we chose $\delta_0 \muchless \delta$, the statement of
	Theorem~\ref{thm:main} follows.
\end{proof}

\section{Concluding remarks}

We remark that in the definition of the~$\apg_{t_0}$ model,
rather than deleting unmatched half-edges, an alternative approach
would be to condition on the event that the statistics match up in such a way that no
half-edges need be deleted, i.e.\ such that the number of half-edges
with \instoryt{t_0} $\vec \mu_1$ and \outstoryt{t_0} $\vec \mu_2$
is identical to the number of half-edges with \instoryt{t_0} $\vec \mu_2$
and \outstoryt{t_0} $\vec \mu_1$, while the number of half-edges with both
\instoryt{t_0} and \outstoryt{t_0} $\vec \mu$ is even. Subsequently one would need to show that
this conditioning does not skew the distribution too much, for which it ultimately suffices
to show that the event has a probability of at least $n^{-\Theta\bc{1}}$.

In some ways this might even be considered the more natural approach,
and indeed it was the approach we initially adopted in early versions of this paper.
However, while the statement that the conditioning event is at least polynomially likely
is an intuitively natural one when one considers that, heuristically,
the number of half-edges with each story should be approximately normally distributed
with standard deviation $O\bc{\sqrt{n}}$,
proving this formally is surprisingly delicate and involves some significant
technical difficulties.

Since at other points in the proof we already need to deal with ``errors'',
and unmatched half-edges can be handled as a subset of these,
this approach turns out to be far simpler and more convenient.

\section{Acknowledgement}

We are very grateful to Amin Coja-Oghlan and Mihyun Kang for their helpful contributions
to an earlier version of this project.

\end{document}